\documentclass{article}

\usepackage[margin=1in,a4paper]{geometry}
\usepackage{amssymb,amsmath,,wasysym}
\usepackage{amsthm} 
\usepackage{xcolor}
\usepackage{float}
\usepackage[colorlinks,linkcolor=blue,citecolor=blue,pagebackref,hypertexnames=false, breaklinks]{hyperref}
\usepackage{circuitikz}



\newcommand{\brak}[1]{\left(#1\right)} 
\newcommand{\norm}[1]{{\left\lVert{#1}\right\rVert}}

\def\Xint#1{\mathchoice
    {\XXint\displaystyle\textstyle{#1}}%
    {\XXint\textstyle\scriptstyle{#1}}%
    {\XXint\scriptstyle\scriptscriptstyle{#1}}%
    {\XXint\scriptscriptstyle\scriptscriptstyle{#1}}%
    \!\int}
    \def\XXint#1#2#3{{\setbox0=\hbox{$#1{#2#3}{\int}$}
    \vcenter{\hbox{$#2#3$}}\kern-.5\wd0}}
    \def\fint{\Xint-}

\def\vint{\mathop{\mathchoice%
          {\setbox0\hbox{$\displaystyle\intop$}\kern 0.22\wd0%
           \vcenter{\hrule width 0.6\wd0}\kern -0.82\wd0}%
          {\setbox0\hbox{$\textstyle\intop$}\kern 0.2\wd0%
           \vcenter{\hrule width 0.6\wd0}\kern -0.8\wd0}%
          {\setbox0\hbox{$\scriptstyle\intop$}\kern 0.2\wd0%
           \vcenter{\hrule width 0.6\wd0}\kern -0.8\wd0}%
          {\setbox0\hbox{$\scriptscriptstyle\intop$}\kern 0.2\wd0%
           \vcenter{\hrule width 0.6\wd0}\kern -0.8\wd0}}%
          \mathopen{}\int}

\newcommand{\vast}{\bBigg@{4}}
\newcommand{\Vast}{\bBigg@{5}}
\makeatother
\newcommand{\eps}{\varepsilon}

\newcommand{\pip}{\varphi}

\newcommand{\supp}{{\rm supp}\, }

\def\esssup{\mathop\mathrm{\,ess\,sup\,}}

\theoremstyle{plain}
\newtheorem{thm}{Theorem}[section]
\newtheorem{theorem}{Theorem}[section]
\newtheorem{lem}[thm]{Lemma}
\newtheorem{cor}[thm]{Corollary}
\newtheorem{corollary}[thm]{Corollary}

\newtheorem{proposition}[thm]{Proposition}

\newtheorem{example}[thm]{Example}

\theoremstyle{definition}
\newtheorem{defn}[thm]{Definition}

\theoremstyle{remark}
\newtheorem{remark}[thm]{Remark}
\newcommand{\bremark}{\begin{remark} \em}
\newcommand{\eremark}{\end{remark} }
\usepackage{color}

\hbadness=\maxdimen






\usepackage[textwidth=3.2cm]{todonotes}

\begin{document}


\title{Korevaar-Schoen-Sobolev spaces and critical exponents in metric measure spaces}

\author{Fabrice Baudoin\footnote{Partly supported by the NSF grants DMS~1901315 \& DMS~2247117.}}

\maketitle

\begin{abstract}
We survey, unify and present new  developments in the theory of Korevaar-Schoen-Sobolev spaces on metric measure spaces. While this theory coincides with those of Cheeger and Shanmugalingam if the space is doubling and supports a Poincar\'e inequality, it offers new perspectives in the context of fractals  for which the approach by weak upper gradients is inadequate.
\end{abstract}

\tableofcontents 

\section{Introduction}

If $f:\mathbb R^n \to \mathbb{R}$ is a $C^1$ Lipschitz function then for every $x,y \in \mathbb{R}^n$
\[
| f(x) -f(y) | \le \| \nabla f \|_{\infty} \, \| x-y \|.
\]
One can rephrase this inequality as
\[
\sup_{r  >0 } \sup_{x,y, \| x- y\| \le r} \frac{|f(x)-f(y)|}{r} \le \liminf_{r \to 0 } \sup_{x,y, \| x- y\| \le r} \frac{|f(x)-f(y)|}{r}.
\]
More generally (as follows  for instance from \cite[Proposition 1.11]{MR1708448}) such an inequality still holds if $f$ is a Lipschitz function defined on a length metric space, i.e.  a space for which any pair of points $x,y$ can be connected with a rectifiable curve and the infimum of the length of such curves  is the distance $d(x,y)$.

\

In this work, we shall be interested in $L^p$ analogues in the context of a doubling metric measure space $(X,d,\mu)$: For $p \ge 1, r>0, \alpha>0$ and $f \in L^p(X,\mu)$, we define 
\[
E_{p,\alpha} (f,r) = \int_X \frac{1}{\mu(B(x,r))}\left( \int_{B(x,r)} \frac{|f(y)-f(x)|^p}{r^{p\alpha}} d\mu(y) \right)d\mu(x) 
\] 
and ask if for some $\alpha > 0$ there is a constant $C \ge 1$ (depending only on the geometry of the space $(X,d,\mu)$) such that
\begin{align}\label{question intro}
\sup_{r >0} E_{p,\alpha}(f,r) \le C \liminf_{r  \to 0} E_{p,\alpha}(f,r) 
\end{align}
holds for some non-constant functions $f$. An underlying insight is that if \eqref{question intro} holds for a large class of functions, then the underlying metric measure space has some level of  $L^p$ infinitesimal regularity and global controlled $L^p$ geometry.  In particular, the value $p=1$ is related to the existence of a rich theory of BV functions and sets of finite perimeter satisfying isoperimetric estimates, see \cite{BV2,BV3} and \cite{MR3558354}. The value $p=2$ is related to the existence of a nice Laplacian on the space, more precisely the existence of a  local Dirichlet form which can be constructed as a $\Gamma$-limit of the functionals $E_{2,\alpha}$ as $r\to 0$, see \cite{MR2161694} and \cite{MR1617040}.

In this  paper we will see that \eqref{question intro} holds in a large class of spaces of different nature. This class includes doubling spaces satisfying a Poincar\'e inequality but also some  fractals like the Vicsek set and the Sierpi\'nski  gasket. For some other spaces like the Sierpi\'nski  carpet the  validity of \eqref{question intro}  is still an open question\footnote{March 2024 update: The validity of \eqref{question intro} in the Sierpi\'nski  carpet  was recently proved for some $\alpha$ and some range of $p$'s by Yang in  \cite{yang2024korevaarschoen}  and then  shortly after,  for every $p >1$ by Murugan and Shimizu in \cite{murugan2023firstorder}.  }.

We will also show that if \eqref{question intro} holds,  a rich theory of Sobolev spaces develops  using the scale of the Korevaar-Schoen spaces first introduced in \cite{MR1266480} in a Riemannian setting. Assuming doubling and a $p$-Poincar\'e inequality, that theory is equivalent to the theory of Sobolev spaces built on the notion of weak upper gradients by Cheeger \cite{MR1708448} and Shanmugalingam \cite{Shanmu00} and also to the theory of Haj\l{}asz \cite{MR1401074}. However, on spaces like fractals where the set of rectifiable curves is not rich enough in the measure theoretic sense (see Remark \ref{comment N}) the theory built on weak upper gradients yields non-useful Sobolev spaces, often the whole $L^p$ space. By contrast the theory  we can develop using the Korevaar-Schoen spaces  still produces a fruitful set of  results which can be used to study further the geometry of the space; In particular a whole scale of Gagliardo-Nirenberg type Sobolev embeddings is  available. 

Furthermore, an appealing aspect of the theory of Korevaar-Schoen-Sobolev spaces is its close connection to the very rich theory of heat kernels and Dirichlet forms as was developed in \cite{BV1,BV2,BV3} after \cite{MR2075670}, \cite{MR3420351}, \cite{MR3484021}, and  \cite{MR2574998}.  Due to this connection, one can hope to export to a general metric measure space setting some  of the powerful heat kernel techniques, like the Bakry-\'Emery-Ledoux machinery, see \cite{BV2,BV3}.

\

The paper is organized as follows. Sections 2 and 3 are preliminary sections, we collect some useful and  mostly known results about the class of Besov-Lipschitz functions on a doubling metric measure space. In Section 4, we first introduce and study the Besov critical exponents of a metric measure space and discuss \eqref{question intro} in connection with the Korevaar-Schoen-Sobolev spaces which we define as the Besov-Lipschitz spaces at the critical exponent. Finally, we show how \eqref{question intro} yields Sobolev embeddings and Gagliardo-Nirenberg inequalities. In Section 5, we show after \cite{MR2415381} and \cite{MR1628655} that \eqref{question intro} holds with $\alpha=1$ if the space satisfies a $p$-Poincar\'e inequality and point out that the theory of Korevaar-Schoen-Sobolev spaces is then equivalent to Cheeger and Shanmugalingam theories. We also prove, and this is one of  our main contributions, that if the space satisfies a generalized $p$-Poincar\'e inequality  and a controlled cutoff condition similar to that of \cite{MR2228569}, then \eqref{question intro} holds with a parameter $\alpha$ possibly greater than one. The case $p=2$ in Dirichlet spaces with sub-Gaussian heat kernel estimates is then discussed as a corollary of this general approach.
 
In Section 6, we discuss in  detail the Korevaar-Schoen-Sobolev spaces in two popular examples of fractals: The Vicsek set and the Sierpi\'nski  gasket. We show that those two examples satisfy for every $p \ge 1$ the inequality \eqref{question intro} for some value $\alpha=\alpha_p >1$ and as a consequence obtain new Nash inequalities. Finally, in Section 7,  we review some of the results in \cite{BV2,BV3,BV1} about the connection between the Dirichlet forms theory and the Korevaar-Schoen-Sobolev spaces.  Throughout the text several open problems and possible research directions are discussed.

\

\textbf{Acknowledgments:}  
\begin{itemize}
\item The author would like to thank Patricia Alonso-Ruiz, Li Chen, and Nageswari Shanmugalingam for stimulating discussions on  topics related to this work  during a workshop at the University of Texas A\&M in August 2022 and  also thank Takashi Kumagai for relevant comments on a very early version of the draft.
\item The author also thanks an anonymous referee for a meticulous reading which improved the presentation of the paper.
\end{itemize}

\section{Setup}

Our setting is that of \cite{HKST15}. Throughout the paper, let $(X,d,\mu)$ be a metric measure space\footnote{We assume that $X$ has more than one point so that there exist non-constant functions and $\mathrm{diam}(X) >0$.} where  $\mu$ is a Borel regular measure.  Open metric balls will be denoted by
\[
B(x,r)= \{ y \in X : d(x,y)<r \}.
\] 
Sometimes, when convenient, if $B$ is a ball and $\lambda>0$ we will denote by $\lambda B$ the ball with same center and radius multiplied by $\lambda$.

We will always assume that the measure $\mu$ is doubling  and positive in the sense that there exists a constant $C>0$, called the doubling constant, such that for every $x \in X, r>0$,
\[
0< \mu (B(x,2r)) \le C \mu(B(x,r)) <+\infty.
\]

It follows from the doubling property of $\mu$ (see \cite[Lemma 8.1.13]{HKST15})  that there is a constant $0<Q\ <\infty$ and $C > 0$ such that 
whenever $0<r\le R$ and $x\in X$, we have 
\begin{equation}\label{eq:mass-bounds}
 \frac{\mu(B(x,R))}{\mu(B(x,r))}
 \le C\left(\frac{R}{r}\right)^Q.
\end{equation}

Another well-known consequence of the doubling property is the availability of maximally separated $\eps$-coverings with the bounded overlap property and subordinated Lipschitz partitions of unity (see \cite[Pages 102-104]{HKST15} or \cite[Appendix B.7]{MR1699320}):
\begin{proposition}[Controlled Lipschitz partition of unity]\label{partition of unity}
Let $\varepsilon >0$. There exists a countable subset $A(\varepsilon)$ of X such that:
\begin{itemize}
\item $d(a_1,a_2) \ge \varepsilon$ for all $a_1,a_2\in A(\varepsilon)$  with $a_1\neq a_2$;
\item $X = \bigcup_{ a \in A(\varepsilon) } B(a,\varepsilon)$.
\end{itemize}
Moreover, for any $k> 0$ there exists a constant $\beta (k) > 0$ depending only on $k$ and on the doubling constant such that:
\begin{itemize}
\item $\sum_{a\in A(\varepsilon)} 1_{ B(a,k\varepsilon)} (x)\le \beta(k)$ for every $x \in X$.
\end{itemize}
In addition, we can find a family $(\phi_a^\varepsilon)_{a \in A(\varepsilon)}$ of real-valued Lipschitz functions on $X$ such that:
\begin{itemize}
\item  The functions $\phi_a^\varepsilon$ have Lipschitz constant not greater than $\lambda/\varepsilon$ where $\lambda>0$ is a constant depending only on the doubling constant;
\item $0 \le \phi_a^\varepsilon \le 1_{B(a,2\varepsilon)}$;
\item $\sum_{a \in A(\varepsilon)}  \phi_a^\varepsilon =1$.
\end{itemize}
\end{proposition}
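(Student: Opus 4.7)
The plan is to build $A(\varepsilon)$ by a Zorn/greedy argument, derive the bounded overlap from doubling by a volume-packing estimate, and then obtain the partition of unity by normalizing Lipschitz bump functions.

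First I would take $A(\varepsilon)$ to be a maximal $\varepsilon$-separated subset of $X$, whose existence follows from Zorn's lemma (or, using doubling and separability, a direct exhaustion argument which will also give countability). Maximality forces $\varepsilon$-density: if some $x \in X$ satisfied $d(x,a)\ge\varepsilon$ for every $a\in A(\varepsilon)$, then $A(\varepsilon)\cup\{x\}$ would still be $\varepsilon$-separated, contradicting maximality. Hence $X=\bigcup_{a\in A(\varepsilon)} B(a,\varepsilon)$.

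For the bounded overlap bound, fix $x\in X$ and let $N(x)=\#\{a\in A(\varepsilon):x\in B(a,k\varepsilon)\}$. Since the points of $A(\varepsilon)$ are $\varepsilon$-separated, the balls $B(a,\varepsilon/2)$ with $a\in B(x,k\varepsilon)$ are pairwise disjoint and all contained in $B(x,(k+\tfrac12)\varepsilon)$. On the other hand, for any such $a$ we have $B(x,(k+\tfrac12)\varepsilon)\subset B(a,(2k+1)\varepsilon)$, so by iterating the doubling inequality there is a constant $C(k)$, depending only on $k$ and the doubling constant, with
\[
\mu\bigl(B(a,(2k+1)\varepsilon)\bigr)\le C(k)\,\mu\bigl(B(a,\varepsilon/2)\bigr).
\]
Summing the disjoint lower estimates yields
\[
\mu\bigl(B(x,(k+\tfrac12)\varepsilon)\bigr)\ge \sum_{a\in A(\varepsilon)\cap B(x,k\varepsilon)} \mu(B(a,\varepsilon/2))\ge \frac{N(x)}{C(k)}\,\mu\bigl(B(x,(k+\tfrac12)\varepsilon)\bigr),
\]
so $N(x)\le C(k)=:\beta(k)$.

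For the partition of unity, define the Lipschitz bumps
\[
\psi_a^\varepsilon(x)=\max\bigl(0,\min(1,\,2-d(x,a)/\varepsilon)\bigr),
\]
which equal $1$ on $B(a,\varepsilon)$, vanish outside $B(a,2\varepsilon)$, and have Lipschitz constant $1/\varepsilon$. Set
\[
\Psi^\varepsilon(x)=\sum_{a\in A(\varepsilon)}\psi_a^\varepsilon(x),\qquad \phi_a^\varepsilon(x)=\psi_a^\varepsilon(x)/\Psi^\varepsilon(x).
\]
The $\varepsilon$-density of $A(\varepsilon)$ shows $\Psi^\varepsilon\ge 1$ everywhere, so $\phi_a^\varepsilon$ is well defined with $0\le\phi_a^\varepsilon\le\mathbf{1}_{B(a,2\varepsilon)}$ and $\sum_a\phi_a^\varepsilon=1$. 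The bounded overlap bound with $k=2$ means that at each $x$ at most $\beta(2)$ summands in $\Psi^\varepsilon$ are nonzero, so $\Psi^\varepsilon$ is $\beta(2)/\varepsilon$-Lipschitz; combined with $\Psi^\varepsilon\ge 1$ and the Leibniz/quotient estimate, the functions $\phi_a^\varepsilon$ are Lipschitz with a constant of the form $\lambda/\varepsilon$ where $\lambda$ depends only on $\beta(2)$, hence only on the doubling constant.

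I expect the only genuinely delicate step to be the volume-packing computation that yields $\beta(k)$; everything else is routine once the maximal separated net and the standard tent functions $\psi_a^\varepsilon$ are in hand. A small subtlety in the last step is verifying that the quotient of two Lipschitz functions retains a uniform Lipschitz bound, which follows from the uniform lower bound $\Psi^\varepsilon\ge 1$ and the fact that on $B(a,2\varepsilon)$ only finitely many other $\psi_b^\varepsilon$ can be nonzero.
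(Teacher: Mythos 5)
Your proof is correct and is essentially the standard argument that the paper itself does not reproduce but delegates to the cited references (\cite[Pages 102--104]{HKST15}, \cite[Appendix B.7]{MR1699320}): a maximal $\varepsilon$-separated net, a volume-packing count using the disjoint balls $B(a,\varepsilon/2)$ inside $B(x,(k+\tfrac12)\varepsilon)$ together with iterated doubling, and normalized tent functions. All the estimates check out (the only nitpick is that the Lipschitz constant of $\Psi^\varepsilon$ should be taken as $2\beta(2)/\varepsilon$ rather than $\beta(2)/\varepsilon$, since both points $x$ and $y$ contribute up to $\beta(2)$ active summands, but this does not affect the conclusion).
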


We note that under our assumptions $(X,d)$ is in particular separable.

\paragraph{Notation:}
\begin{enumerate}
    \item Throughout the notes, we use the letters $c,C, c_1, c_2, C_1, C_2$  to denote positive constants which may vary from line to line. 
    
    \item For two non-negative functionals $\Lambda_1, \Lambda_2$ defined on a functional space $\mathcal F$, the notation $\Lambda_1(f) \simeq \Lambda_2(f)$ means that there exist two constants $C_1, C_2>0$ such that for every $f\in \mathcal F$, $C_1\Lambda_1(f)\le \Lambda_2(f)\le C_2 \Lambda_1(f)$.
    
\item For any Borel set $A$ and any measurable function $f$, we sometimes write  the average of $f$ on the set $A$ as
\[
\fint_A f(x) d\mu(x):=\frac1{\mu(A)}\int_A f(x) d\mu(x).
\]
\item If $A,B$ are subsets of $X$, then
\[
d(A,B):=\inf \left\{ d(x,y) , x \in A, y \in B \right\}.
\]
\item If $a,b \in \mathbb R$, $a \wedge b=\min \{ a ,b \}$.
\end{enumerate}

\section{Besov-Lipschitz spaces}

We start with a short review of some properties of the Besov-Lipschitz spaces that will be useful in the sequel. The theory of Besov classes on doubling metric measure spaces is   rich and the literature on this topic is nowadays quite large so we will not try to be exhaustive and do not claim originality; For references related to the discussion below,  see for instance \cite{MR4196573},\cite{BV2},\cite{BV3},\cite{BV1}, \cite{MR3420351},\cite{MR2485404} and \cite{MR2097270}.

\subsection{Some basic properties}

For $p \ge 1, r>0, \alpha \ge 0$ and $f \in L^p(X,\mu)$, we define 
\[
E_{p,\alpha} (f,r) = \int_X \frac{1}{\mu(B(x,r))}\left( \int_{B(x,r)} \frac{|f(y)-f(x)|^p}{r^{p\alpha}} d\mu(y) \right)d\mu(x) 
\] 
and consider the Besov-Lipschitz space
\[
\mathcal{B}^{\alpha,p}(X)=\left\{ f \in L^p(X,\mu): \sup_{r >0} E_{p,\alpha} (f,r)<+\infty \right\}.
\]
We equip $\mathcal{B}^{\alpha,p}(X)$ with the norm given by
\[
\| f \|^p_{\mathcal{B}^{\alpha,p}(X)}=\| f \|^p_{L^p(X,\mu)} +  \sup_{r >0} E_{p,\alpha} (f,r).
\]

\begin{lem}\label{Lp comparison}
For every $p \ge 1$, $f \in L^p(X,\mu)$, $r >0$ and $\alpha \ge 0$
\[
E_{p,\alpha} (f,r) \le \frac{C}{r^{p\alpha}} \| f \|^p_{L^p(X,\mu)}.
\]
In particular, for $\alpha=0$, we have $\mathcal{B}^{\alpha,p}(X)=L^p(X,\mu)$.
\end{lem}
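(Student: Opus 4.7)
The plan is to bound $E_{p,\alpha}(f,r)$ by splitting $|f(y)-f(x)|^p$ via the elementary inequality $|a-b|^p \le 2^{p-1}(|a|^p + |b|^p)$, which yields
\[
E_{p,\alpha}(f,r) \le \frac{2^{p-1}}{r^{p\alpha}} \int_X \frac{1}{\mu(B(x,r))} \int_{B(x,r)} \bigl(|f(y)|^p + |f(x)|^p\bigr) d\mu(y)\, d\mu(x).
\]
I would then handle the two resulting integrals separately.

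The integral containing $|f(x)|^p$ is immediate: the inner integral in $y$ cancels the factor $\mu(B(x,r))^{-1}$ and leaves exactly $\int_X |f(x)|^p d\mu(x) = \|f\|^p_{L^p}$. For the term with $|f(y)|^p$, I apply Fubini (justified since the integrand is nonnegative) and use the symmetry $y \in B(x,r) \Longleftrightarrow x \in B(y,r)$ to rewrite it as
\[
\int_X |f(y)|^p \left( \int_{B(y,r)} \frac{d\mu(x)}{\mu(B(x,r))} \right) d\mu(y).
\]

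The main (and essentially the only) nontrivial step is to control the inner integral, and this is where the doubling hypothesis enters. For $x \in B(y,r)$ we have $B(y,r) \subset B(x,2r)$, so the doubling property gives $\mu(B(y,r)) \le \mu(B(x,2r)) \le C \mu(B(x,r))$, i.e.\ $\mu(B(x,r))^{-1} \le C\,\mu(B(y,r))^{-1}$. Integrating over $B(y,r)$ then yields
\[
\int_{B(y,r)} \frac{d\mu(x)}{\mu(B(x,r))} \le C,
\]
uniformly in $y$ and $r$. Combining the two pieces delivers the claimed inequality $E_{p,\alpha}(f,r) \le C r^{-p\alpha} \|f\|^p_{L^p(X,\mu)}$, with a constant $C$ depending only on $p$ and the doubling constant.

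Finally, for the case $\alpha = 0$, the established bound reads $E_{p,0}(f,r) \le C\|f\|^p_{L^p}$ uniformly in $r$, so $\sup_{r>0} E_{p,0}(f,r) < \infty$ whenever $f \in L^p(X,\mu)$; conversely, $\mathcal{B}^{0,p}(X) \subset L^p(X,\mu)$ by definition of the Besov norm, giving the equality $\mathcal{B}^{0,p}(X) = L^p(X,\mu)$. The only subtle point in the whole argument is the doubling step above; everything else is Fubini and the scalar inequality for $|a-b|^p$.
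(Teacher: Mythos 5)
Your proof is correct and follows essentially the same route as the paper: the $2^{p-1}$ splitting, Fubini with the symmetry $y\in B(x,r)\Leftrightarrow x\in B(y,r)$, and the doubling estimate $\mu(B(x,r))^{-1}\le C\mu(B(y,r))^{-1}$ for $x\in B(y,r)$ via the inclusion $B(y,r)\subset B(x,2r)$. The treatment of the $\alpha=0$ case is also fine.
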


\begin{proof}
We note that
\begin{align*}
E_{p,\alpha} (f,r)& =\frac{1}{r^{p\alpha}} \int_X \frac{1}{\mu(B(x,r))}\left( \int_{B(x,r)} |f(y)-f(x)|^p d\mu(y) \right)d\mu(x) \\
 &\le \frac{2^{p-1}}{r^{p\alpha}} \int_X \frac{1}{\mu(B(x,r))}\left( \int_{B(x,r)}( |f(y)|^p+|f(x)|^p )d\mu(y) \right)d\mu(x) \\
 &= \frac{2^{p-1}}{r^{p\alpha}} \left( \| f \|^p_{L^p(X,\mu)} +\int_X \left( \int_{B(y,r)} \frac{d\mu(x)}{\mu(B(x,r))}\right) |f(y)|^p d\mu(y) \right).
\end{align*}
Using the volume doubling property, one has then
\[
\int_{B(y,r)} \frac{d\mu(x)}{\mu(B(x,r))}\le C \int_{B(y,r)} \frac{d\mu(x)}{\mu(B(x,2r))}\le C \int_{B(y,r)} \frac{d\mu(x)}{\mu(B(y,r))} =C,
\]
and the conclusion follows.
\end{proof}

Using an argument as in the proof of the previous lemma, one also has the following result:

\begin{lem}\label{local-global}
Let $p\ge1$ and  $\alpha \ge 0$. For $f \in L^p(X,\mu)$, and for every $r >0$, we have
\[
\sup_{\rho >0} E_{p,\alpha} (f,\rho) \le \frac{C}{r^{p\alpha}} \| f \|^p_{L^p(X,\mu)} +\sup_{\rho \in (0,r]}E_{p,\alpha} (f,\rho).
\]
Therefore,
\[
\mathcal{B}^{\alpha,p}(X)
=\left\{ f \in L^p(X,\mu)\, :   \limsup_{r  \to 0} E_{p,\alpha} (f,r)<+\infty \right\}.
\]
\end{lem}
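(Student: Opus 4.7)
The plan is a direct continuation of the trick used in the proof of Lemma~\ref{Lp comparison}: we just need to take advantage of the fact that $\rho \mapsto \rho^{-p\alpha}$ is decreasing in $\rho$, so the a priori $L^p$-bound is worst at the smallest admissible scale.

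First I would split the supremum according to whether $\rho$ is below or above the threshold $r$:
\[
\sup_{\rho > 0} E_{p,\alpha}(f,\rho) = \max\!\left( \sup_{\rho \in (0,r]} E_{p,\alpha}(f,\rho),\; \sup_{\rho > r} E_{p,\alpha}(f,\rho) \right).
\]
The first term is trapped in the right hand side by definition. For the second, I would apply Lemma~\ref{Lp comparison} to get $E_{p,\alpha}(f,\rho) \le C \rho^{-p\alpha}\|f\|^p_{L^p(X,\mu)}$ for each $\rho > r$, and then use monotonicity of $\rho \mapsto \rho^{-p\alpha}$ (since $\alpha \ge 0$) to replace $\rho^{-p\alpha}$ by $r^{-p\alpha}$. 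This immediately yields the displayed inequality, with the same constant $C$ as in Lemma~\ref{Lp comparison}.

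For the equivalent description of $\mathcal{B}^{\alpha,p}(X)$, the inclusion $\subset$ is trivial because $\limsup_{r \to 0} E_{p,\alpha}(f,r) \le \sup_{r > 0} E_{p,\alpha}(f,r)$. For the reverse inclusion, if $f \in L^p(X,\mu)$ satisfies $\limsup_{r \to 0} E_{p,\alpha}(f,r) < +\infty$, then by definition of the limsup there exists some $r_0 > 0$ such that $\sup_{\rho \in (0,r_0]} E_{p,\alpha}(f,\rho) < +\infty$; plugging this into the inequality just proved (with $r = r_0$) gives $\sup_{\rho > 0} E_{p,\alpha}(f,\rho) < +\infty$, i.e.\ $f \in \mathcal{B}^{\alpha,p}(X)$.

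There is no real obstacle here: the only mild subtlety is to remember to use $\alpha \ge 0$ when passing from $\rho^{-p\alpha}$ to $r^{-p\alpha}$ on the range $\rho > r$, and to correctly interpret the finiteness of the limsup as the finiteness of the sup on some small enough interval $(0,r_0]$. Everything else is an immediate consequence of Lemma~\ref{Lp comparison}.
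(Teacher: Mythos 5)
Your proposal is correct and is essentially the argument the paper intends: the paper gives no separate proof, remarking only that the lemma follows ``using an argument as in the proof of the previous lemma,'' which is exactly your splitting of the supremum at $\rho=r$ combined with the bound $E_{p,\alpha}(f,\rho)\le C\rho^{-p\alpha}\|f\|^p_{L^p(X,\mu)}$ and the monotonicity of $\rho\mapsto\rho^{-p\alpha}$. The deduction of the $\limsup$ characterization is also handled correctly.
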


It follows that for a fixed $p$, the family of spaces $\mathcal{B}^{\alpha,p}(X)$, $\alpha \ge 0$ is non-increasing:

\begin{corollary}
Let $p\ge1$ and  $\alpha \ge 0$. Then, for $\beta > \alpha$,  $\mathcal{B}^{\beta,p}(X)\subset \mathcal{B}^{\alpha,p}(X)$.
\end{corollary}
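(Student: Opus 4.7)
The plan is to combine the elementary scaling identity
\[
E_{p,\alpha}(f,r) = r^{p(\beta-\alpha)} E_{p,\beta}(f,r),
\]
which follows directly from the definition by factoring out the powers of $r$ in the denominator, with Lemma \ref{local-global} to pass from small-scale control to a uniform bound on $\sup_{r>0} E_{p,\alpha}(f,r)$.

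Concretely, suppose $f \in \mathcal{B}^{\beta,p}(X)$. First, I would restrict to scales $\rho \in (0,1]$: since $\beta > \alpha$ and $\rho \le 1$, the factor $\rho^{p(\beta-\alpha)} \le 1$, so
\[
\sup_{\rho \in (0,1]} E_{p,\alpha}(f,\rho) \le \sup_{\rho \in (0,1]} E_{p,\beta}(f,\rho) \le \sup_{\rho > 0} E_{p,\beta}(f,\rho) < +\infty,
\]
where the last finiteness is the hypothesis $f \in \mathcal{B}^{\beta,p}(X)$. This handles all small scales.

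To upgrade to all scales $\rho > 0$, I would apply Lemma \ref{local-global} with the parameter $r = 1$ (a convenient fixed choice), which gives
\[
\sup_{\rho > 0} E_{p,\alpha}(f,\rho) \le C \| f \|^p_{L^p(X,\mu)} + \sup_{\rho \in (0,1]} E_{p,\alpha}(f,\rho).
\]
Both terms on the right are finite by the previous step together with $f \in L^p(X,\mu)$, so $f \in \mathcal{B}^{\alpha,p}(X)$, proving the inclusion.

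There is essentially no obstacle here: the scaling identity trivially controls small scales, while large scales are a routine $L^p$-tail estimate already packaged in Lemma \ref{local-global}. The only mild subtlety is that one cannot pointwise dominate $E_{p,\alpha}(f,r)$ by $E_{p,\beta}(f,r)$ for all $r$ (the factor $r^{p(\beta-\alpha)}$ blows up as $r \to \infty$), which is precisely why the two-scale splitting via Lemma \ref{local-global} is needed.
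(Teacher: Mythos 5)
Your proof is correct and follows essentially the same route as the paper: both rest on the scaling identity $E_{p,\alpha}(f,r)=r^{p(\beta-\alpha)}E_{p,\beta}(f,r)$ together with Lemma \ref{local-global}, which the paper invokes in the equivalent form that membership in $\mathcal{B}^{\alpha,p}(X)$ is determined by $\limsup_{r\to 0}E_{p,\alpha}(f,r)$ (and in fact notes this limsup is $0$). Your version merely makes the small-scale/large-scale split explicit, which is fine.
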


\begin{proof}
If $f \in \mathcal{B}^{\beta,p}(X)$ one has $\sup_{r >0} E_{p,\beta} (f,r)<+\infty$. This gives
\[
\limsup_{r  \to 0} E_{p, \alpha} (f,r)=\limsup_{r  \to 0} r^{p(\beta-\alpha)} E_{p,\beta} (f,r)=0<+\infty.
\]
\end{proof}

Next, we show  the Banach space property.

\begin{theorem}
 $(\mathcal{B}^{\alpha,p}(X), \| \cdot \|_{\mathcal{B}^{\alpha,p}(X)})$ is a Banach space for every $p \ge 1$ and $\alpha \ge 0$.
\end{theorem}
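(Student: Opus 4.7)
The plan is to first verify that $\|\cdot\|_{\mathcal{B}^{\alpha,p}(X)}$ is genuinely a norm, and then establish completeness via a Fatou-type argument applied to a subsequence that converges $\mu$-almost everywhere.

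\textbf{Step 1: norm properties.} Homogeneity and positive-definiteness are immediate from the $L^p$ component of the norm. For the triangle inequality, I would fix $r>0$ and observe that $E_{p,\alpha}(f,r)^{1/p}$ is nothing but the $L^p$-norm of the function $(x,y) \mapsto r^{-\alpha}|f(y)-f(x)|$ with respect to the measure $\frac{1}{\mu(B(x,r))}\mathbf{1}_{B(x,r)}(y)\,d\mu(y)\,d\mu(x)$. Minkowski's inequality therefore gives $E_{p,\alpha}(f+g,r)^{1/p}\le E_{p,\alpha}(f,r)^{1/p}+E_{p,\alpha}(g,r)^{1/p}$, and taking supremum over $r$ preserves the triangle inequality. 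Combining with the usual Minkowski inequality in $L^p(X,\mu)$ yields the triangle inequality for $\|\cdot\|_{\mathcal{B}^{\alpha,p}(X)}$.

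\textbf{Step 2: extracting a candidate limit.} Let $(f_n)$ be a Cauchy sequence in $\mathcal{B}^{\alpha,p}(X)$. Since $\|f_n-f_m\|_{L^p(X,\mu)}\le \|f_n-f_m\|_{\mathcal{B}^{\alpha,p}(X)}$, the sequence is Cauchy in $L^p(X,\mu)$, hence converges in $L^p$ to some $f\in L^p(X,\mu)$. Passing to a subsequence $(f_{n_k})$ that converges to $f$ $\mu$-almost everywhere, we secure the pointwise information needed to apply Fatou's lemma below.

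\textbf{Step 3: Fatou on the Besov seminorm.} Fix $\varepsilon>0$ and choose $N$ such that $\|f_n-f_m\|_{\mathcal{B}^{\alpha,p}(X)}^p<\varepsilon$ whenever $n,m\ge N$; in particular $\sup_{r>0}E_{p,\alpha}(f_n-f_m,r)<\varepsilon$. Fix $n\ge N$ and $r>0$. Along the a.e.-convergent subsequence, $|f_n(y)-f_{n_k}(y)-(f_n(x)-f_{n_k}(x))|^p \to |f_n(y)-f(y)-(f_n(x)-f(x))|^p$ $\mu\otimes\mu$-a.e., so Fatou's lemma applied to the iterated integral defining $E_{p,\alpha}$ gives
\[
E_{p,\alpha}(f_n-f,r)\le \liminf_{k\to\infty} E_{p,\alpha}(f_n-f_{n_k},r) \le \varepsilon.
\]
Since this bound is uniform in $r$, we obtain $\sup_{r>0} E_{p,\alpha}(f_n-f,r)\le \varepsilon$ for every $n\ge N$. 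Combined with the $L^p$-convergence this yields $\|f_n-f\|_{\mathcal{B}^{\alpha,p}(X)}\to 0$. Writing $f=f_N+(f-f_N)$ shows that $f\in \mathcal{B}^{\alpha,p}(X)$, completing the proof.

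\textbf{Main obstacle.} The only delicate point is that the Besov seminorm is a supremum over $r>0$, so one cannot directly take a limit inside. The clean way around it is to apply Fatou for each fixed $r$ to obtain a bound uniform in $r$, after which the supremum can be taken. Once this interchange is handled correctly, the rest is standard Banach-space routine.
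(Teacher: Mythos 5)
Your proof is correct, and the completeness argument runs along a slightly different track from the paper's. The paper exploits Lemma \ref{Lp comparison} to get $|E_{p,\alpha}(f,r)^{1/p}-E_{p,\alpha}(f_n,r)^{1/p}|\le C r^{-\alpha}\|f-f_n\|_{L^p(X,\mu)}$, i.e. for each fixed $r$ the functional $E_{p,\alpha}(\cdot,r)^{1/p}$ is Lipschitz on $L^p$, so $E_{p,\alpha}(f_n-f_m,r)\to E_{p,\alpha}(f-f_m,r)$ exactly as $n\to\infty$; the conclusion then follows by taking suprema and letting $m\to\infty$. You instead pass to an a.e.-convergent subsequence and invoke Fatou's lemma at each fixed scale $r$, obtaining only lower semicontinuity of $E_{p,\alpha}(\cdot,r)$ rather than continuity — which is all that completeness requires. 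The paper's route yields the stronger pointwise-in-$r$ convergence of the energies but leans on the doubling-dependent comparison lemma; your Fatou route is more robust (it would survive in settings where no bound of $E_{p,\alpha}(f,r)$ by $r^{-p\alpha}\|f\|_{L^p}^p$ is available) at the modest cost of extracting a subsequence. Your Step 1, identifying $E_{p,\alpha}(f,r)^{1/p}$ as an $L^p$ norm with respect to the measure $\mu(B(x,r))^{-1}\mathbf{1}_{B(x,r)}(y)\,d\mu(y)\,d\mu(x)$ to get the triangle inequality, is a detail the paper omits but is correct and worth recording.
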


\begin{proof}
Let $f_n$ be a Cauchy sequence in $\mathcal{B}^{\alpha,p}(X)$. Let $f$ be the $L^p$ limit of $f_n$. From  Minkowski's inequality and Lemma \ref{Lp comparison} one has 
\begin{align*}
| E_{p,\alpha}(f,r)^{1/p}-E_{p,\alpha}(f_n,r)^{1/p}| &\le  E_{p,\alpha}(f-f_n,r)^{1/p} \le \frac{C}{r^\alpha} \| f -f_n \|_{L^p(X,\mu)}.
\end{align*}
Thus $E_{p,\alpha}(f_n,r) \to E_{p,\alpha}(f,r)$ from which we deduce
\[
E_{p,\alpha}(f,r) = \lim_{n \to +\infty}E_{p,\alpha}(f_n,r)  \le C.
\]
This implies that $f \in \mathcal{B}^{\alpha,p}(X)$ with $\| f \|_{\mathcal{B}^{\alpha,p}(X)} \le  \lim_{n \to +\infty} \| f_n \|_{\mathcal{B}^{\alpha,p}(X)}$. Similarly, for every fixed $m$, 
\[
\| f-f_m \|_{\mathcal{B}^{\alpha,p}(X)} \le  \lim_{n \to +\infty} \| f_n -f_m \|_{\mathcal{B}^{\alpha,p}(X)}
\]
and passing to the limit as $m \to +\infty$ together with the fact that  $(f_n)$ is Cauchy with respect to  $\|\cdot\|_{\mathcal{B}^{\alpha,p}(X)}$ completes the proof that $f_n \to f $ in $\mathcal{B}^{\alpha,p}(X)$ and therefore that $(\mathcal{B}^{\alpha,p}(X), \| \cdot \|_{\mathcal{B}^{\alpha,p}(X)})$ is a Banach space.
\end{proof}

\subsection{Embeddings of Besov-Lipschitz spaces into H\"older spaces}

For a fixed $p \ge 1$, one can think of the parameter $\alpha$ as a regularity parameter: The larger $\alpha$ is, the smoother functions in $\mathcal{B}^{\alpha,p}(X)$ are. The theorem below reflects this fact.
Recall that we denote by $Q$ the  constant in \eqref{eq:mass-bounds}.

\begin{theorem}\label{T:Morrey_local}
Let $\alpha >0$ and $p \ge 1$ be such that $p>\frac{Q}{\alpha}$. Let $x_0 \in X$ and $R>0$. There exists $C>0$  such that for every $f\in\mathcal{B}^{\alpha,p}(X)$,
\begin{equation}\label{E:Morrey_local}
\mu \otimes \mu -\esssup\limits_{x,y \in B(x_0,R), 0<d(x,y)<R/3}\frac{|f(x)-f(y)|}{d(x,y)^\lambda} \leq C \sup_{r \in (0,R]}E_{p,\alpha}(f,r)^{1/p}
\end{equation}
 where $\lambda=\alpha-\frac{Q}{p}$. 
\end{theorem}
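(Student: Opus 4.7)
The estimate is a Morrey-Campanato-type H\"older embedding. The strategy is the classical telescoping of ball averages at Lebesgue points, with the Besov-Lipschitz integral $E_{p,\alpha}(f,\cdot)$ playing the role of a Poincar\'e inequality. Since $\mu$ is doubling, Lebesgue differentiation holds: for $\mu$-a.e.\ $z\in X$, $f_{B(z,\rho)}\to f(z)$ as $\rho\to 0^+$, so the $\mu\otimes\mu$-essential supremum on the left-hand side reduces to bounding $|f(x)-f(y)|$ for Lebesgue points $x,y\in B(x_0,R)$ with $r:=d(x,y)<R/3$.

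\textbf{Surrogate Poincar\'e estimate.} The core calculation, used twice, is the following. For two balls $B_1,B_2\subset X$, Jensen's inequality applied twice gives
\[
|f_{B_1}-f_{B_2}|^p\le \frac{1}{\mu(B_1)\mu(B_2)}\int_{B_1}\int_{B_2}|f(u)-f(v)|^p\,d\mu(v)\,d\mu(u).
\]
When the radii are comparable to $\rho$ and $B_2\subset B(u,c\rho)$ for every $u\in B_1$ with some constant $c\ge 1$, I enlarge the inner integration to $B(u,c\rho)$, insert the doubling bound $\mu(B(u,c\rho))\le C\mu(B_1)$, extend the outer integration to $X$, and recognise the definition of $E_{p,\alpha}(f,c\rho)$; this yields, provided $c\rho\le R$,
\[
|f_{B_1}-f_{B_2}|^p\le \frac{C\,\rho^{p\alpha}}{\mu(B_2)}\,E_{p,\alpha}(f,c\rho).
\]

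\textbf{Telescoping at $x$ and at $y$; conclusion.} Set $r_k:=2^{-k}r$ for $k\ge 0$. For the Lebesgue point $x$, write $f(x)-f_{B(x,r)}=\sum_{k\ge 0}\bigl(f_{B(x,r_{k+1})}-f_{B(x,r_k)}\bigr)$. The surrogate estimate applied to the concentric pair $B(x,r_{k+1})\subset B(x,r_k)$ (with $c=2$, and $2r_k\le 2r<R$) gives
\[
|f_{B(x,r_{k+1})}-f_{B(x,r_k)}|\le \frac{C\,r_k^{\alpha}}{\mu(B(x,r_{k+1}))^{1/p}}\,\sup_{s\in(0,R]}E_{p,\alpha}(f,s)^{1/p}.
\]
The mass lower bound of \eqref{eq:mass-bounds}, $\mu(B(x,r_{k+1}))\ge c(r_{k+1}/R)^Q\mu(B(x,R))$, combined with $\mu(B(x,R))\ge c\mu(B(x_0,R))$ (which follows from $B(x_0,R)\subset B(x,2R)$ and doubling, for $x\in B(x_0,R)$), transforms the right-hand side into $C\,R^{Q/p}\,r_k^\lambda\,\mu(B(x_0,R))^{-1/p}\sup E^{1/p}$. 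Since $\lambda=\alpha-Q/p>0$ under the standing hypothesis $p>Q/\alpha$, the geometric sum $\sum_k r_k^\lambda=r^\lambda/(1-2^{-\lambda})$ converges and yields $|f(x)-f_{B(x,r)}|\le C\,r^\lambda\sup_{s\in(0,R]}E_{p,\alpha}(f,s)^{1/p}$; the analogue holds at $y$. Finally, the surrogate estimate applied to the non-concentric pair $B_1=B(x,r)$, $B_2=B(y,r)$, where the triangle inequality $d(u,v)<3r$ forces $c=3$ and exactly matches the hypothesis $3r\le R$, produces the same $C\,r^\lambda\sup E^{1/p}$ bound for $|f_{B(x,r)}-f_{B(y,r)}|$. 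The triangle inequality
\[
|f(x)-f(y)|\le |f(x)-f_{B(x,r)}|+|f_{B(x,r)}-f_{B(y,r)}|+|f_{B(y,r)}-f(y)|
\]
then delivers the announced estimate.

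\textbf{Main obstacle.} The delicate part is not the individual estimates but the bookkeeping: every radius inserted into $E_{p,\alpha}(f,\cdot)$ must remain within $(0,R]$, and every doubling comparison must be tracked so that the final $C$ depends only on $x_0, R, Q, p, \alpha$, and the doubling constant. The choice of factor $1/3$ in the hypothesis $d(x,y)<R/3$ is dictated precisely by the $3r$-radius appearing in the non-concentric comparison of $f_{B(x,r)}$ and $f_{B(y,r)}$; convergence of the telescoping series, on the other hand, is precisely the supercritical condition $\lambda>0$, i.e.\ $p>Q/\alpha$.
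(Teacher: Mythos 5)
Your proof is correct and follows essentially the same route as the paper's: comparison of ball averages via Jensen/H\"older, a doubling lower bound on ball volumes in terms of $\mu(B(x_0,R))$, telescoping over the dyadic radii $r_k=2^{-k}d(x,y)$ at Lebesgue points (with convergence supplied by $\lambda=\alpha-Q/p>0$), and the triangle inequality through the two ball averages, together with the Lebesgue differentiation theorem for doubling measures. The bookkeeping of radii staying in $(0,R]$ and of the factor $3$ behind the hypothesis $d(x,y)<R/3$ matches the paper's argument as well.
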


\begin{proof}
Let first $0<r<R/3$ and consider $x,y\in B(x_0,R)$ with $d(x,y)\leq r$. Define
\begin{equation*}
f_r(x):=\frac{1}{\mu\big(B(x,r)\big)}\int_{B(x,r)}f(z)\,d\mu(z)
\end{equation*}
and notice that
\[
f_r(x)=\frac{1}{\mu\big(B(x,r)\big)\mu\big(B(y,r)\big)} \int_{B(x,r)}\int_{B(y,r)}f(z)\,d\mu(z')\,d\mu(z).
\]
Analogously one defines $f_r(y)$. H\"older's inequality yields
\begin{align*}
|f_r(x)-f_r(y)|&=\frac{1}{\mu\big(B(x,r)\big)\mu\big(B(y,r)\big)}\Big|\int_{B(x,r)}\int_{B(y,r)}(f(z)-f(z'))\,d\mu(z')\,d\mu(z)\Big|\\
&\leq\bigg(\frac{1}{\mu\big(B(x,r)\big)\mu\big(B(y,r)\big)}\int_{B(x,r)}\int_{B(y,r)}|f(z)-f(z')|^p\,d\mu(z')\,d\mu(z)\bigg)^{1/p}.
\end{align*}
We now note that if $z \in B(x,r)$, $z' \in B(y,r)$ then one has $d(z,z') < 3r$, $B(z,3r) \subset B(x,4r)$ and moreover from the doubling condition \eqref{eq:mass-bounds}  
\[
\mu(B(y,r )) \ge  Cr^Q \frac{ \mu(B(y,2R))}{R^Q} \ge Cr^Q \frac{ \mu(B(x_0,R))}{R^Q}.
\]
Hence, we get
\begin{align*}
|f_r(x)-f_r(y)|^p&\leq \frac{C}{r^{Q}}\int_X\frac{1}{\mu(B(z,3r))} \int_{B(z,3r)}|f(z)-f(z')|^p\,d\mu(z')\,d\mu(z)\\
&\leq Cr^{p\alpha -Q}\sup_{\rho \in(0,R/3)}\frac{1}{\rho^{p\alpha }}\int_X \frac{1}{\mu(B(z,3\rho))}\int_{B(z,3\rho)}|f(z)-f(z')|^p\,d\mu(z')\,d\mu(z)\\
&\leq Cr^{p\alpha -Q} \sup_{\rho \in (0,R]}E_{p,\alpha}(f,\rho),
\end{align*}
where the constant $C$ depends on $R$ and $\mu(B(x_0,R))$.
 Thus,
\begin{equation*}
|f_r(x)-f_r(y)| \leq Cr^{\alpha -\frac{Q}{p}} \sup_{\rho \in (0,R]}E_{p,\alpha}(f,\rho)^{1/p}.
\end{equation*}
Analogously one obtains
\begin{equation}\label{E:Morrey_local_02}
|f_{2r}(x)-f_r(x)| \leq Cr^{\alpha -\frac{Q}{p}} \sup_{\rho \in (0,R]}E_{p,\alpha}(f,\rho)^{1/p}.
\end{equation}
Let now $x\in B(x_0,R)$ be a Lebesgue point of $f$. Setting $r_k=2^{-k}r$, $k=0,1,2\ldots$, the latter inequality yields
\begin{equation}\label{E:Morrey_local_03}
|f(x)-f_r(x)|\leq\sum_{k=0}^\infty|f_{r_k}(x)-f_{r_{k+1}}(x)| \le Cr^{\alpha -\frac{Q}{p}} \sup_{\rho \in (0,R]}E_{p,\alpha}(f,\rho)^{1/p}.
\end{equation}
Let $y\in B(x_0,R)$ be another Lebesgue point of $f$ such that $d(x,y) < R/3$. Applying the triangle inequality as well as~\eqref{E:Morrey_local_02} and~\eqref{E:Morrey_local_03} with $r=d(x,y)$ we obtain
\begin{align*}
|f(x)-f(y)| &  \leq |f(x)-f_r(x)|+|f_r(x)-f_r(y)|+|f_r(y)-f(y)|\\
\leq &Cd(x,y)^{\alpha -\frac{Q}{p}} \sup_{\rho \in (0,R]}E_{p,\alpha} (f,\rho)^{1/p}.
\end{align*}
Then, by virtue of~\cite[Theorem 3.4.3]{HKST15}, the volume doubling property of the space implies the validity of the Lebesgue differentiation theorem, i.e. that $\mu$ a.e. $x \in X$ is a Lebesgue point of $f$.  The conclusion follows. 
\end{proof}

\begin{remark}
 If $X$ has maximal volume growth, i.e  $\mu( B(x,R)) \ge c R^Q$ for every $R \ge 0$, and $x \in X$,  for some $c>0$, then  after tracking the constants in the previous proof, we can let $R \to +\infty$ in \eqref{E:Morrey_local} and obtain
\[
\mu \otimes \mu -\esssup\limits_{x \neq y}\frac{|f(x)-f(y)|}{d(x,y)^\lambda} \leq C \sup_{r>0}E_{p,\alpha}(f,r)^{1/p}.
\]
The same conclusion holds if $X$ has finite diameter as can be seen by choosing $R$ large enough in Theorem \ref{T:Morrey_local}.
\end{remark}

\begin{remark}\label{regular version}
Theorem \ref{T:Morrey_local} implies that if $f \in \mathcal{B}^{\alpha,p}(X)$ with $p >\frac{Q}{\alpha}$, then one can find a  locally  $\left(\alpha-\frac{Q}{p}\right)$  H\"older continuous function $g:X \to \mathbb R$ such that $f=g$ $\mu$-a.e.
\end{remark}

\section{Korevaar-Schoen-Sobolev spaces}

For $\alpha=1$, Korevaar-Schoen-Sobolev spaces have been introduced in a Riemannian setting in \cite{MR1266480} and a presentation in a metric measure space setting is done in \cite[Section 10.4]{HKST15}. However, for some spaces like fractals, it turns out that \eqref{question intro} might be satisfied with $\alpha >1$. The parameter $\alpha$ for which it holds has to be a critical parameter in the scale of the Besov-Lipschitz spaces. In this section we study the critical exponents in the scale of the Besov-Lipschitz spaces, introduce the Korevaar-Schoen-Sobolev spaces as Besov-Lipschitz spaces at the critical parameter and prove that they satisfy  Sobolev embeddings and the whole scale of Gagliardo-Nirenberg inequalities if \eqref{question intro} is satisfied.

\subsection{Critical exponents}

\begin{defn}
Let $p \ge 1$. We define the $L^p$ \textit{critical Besov exponent} of $(X,d,\mu)$  by
\[
\alpha_p =\sup \left\{ \alpha \ge 0; \mathcal{B}^{\alpha,p}(X) \text{ contains non-constant functions} \right\}.
\]
Here and hereafter, by constant function we mean constant $\mu$-a.e.
\end{defn}

\begin{remark}
It might be that $\alpha_p=+\infty$ for every $p \ge 1$, as is the case if $(X,d)$ contains  one isolated point or is strongly  disconnected in the sense that there exist two disjoint non-empty open sets  $X_1,X_2$ such that $d (X_1, X_2)>0$, $\mu (X_1)$ is finite and  $X=X_1 \cup X_2$. Indeed, in that case the function $f=1_{X_1}$ is non-constant and  in $ \mathcal{B}^{\alpha,p}(X)$ for every $\alpha \ge 0$ and $p \ge 1$. For a sufficient condition ensuring the finiteness of $\alpha_p$, see Theorem \ref{estimate sup} below.
\end{remark}

\begin{theorem}\label{constr}
\

\begin{enumerate}
\item For every $p \ge 1$, $\alpha_p \ge 1$.
\item The map $p \to p \alpha_p$ is non-decreasing.
\item The map $p \to \alpha_p$ is non-increasing.
\end{enumerate}
In particular, if $\alpha_p$ is finite for some $p \ge 1$, then it is finite for every $p \ge 1$.
\end{theorem}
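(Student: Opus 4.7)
For claim~(1), my plan is to exhibit an explicit non-constant Lipschitz function in $\mathcal{B}^{1,p}(X)$: the ``tent'' $f(x) = (R-d(x_0,x))_+$, where $x_0 \in X$ and $R > 0$ are chosen so that $f$ is not $\mu$-a.e.\ constant, which is possible because $X$ has more than one point and balls have positive measure. This $f$ is $1$-Lipschitz, bounded, and supported in $B(x_0,R)$, hence lies in $L^p(X,\mu)$. For $y \in B(x,r)$ we have $|f(y)-f(x)|^p \le r^p$, and the integrand of $E_{p,1}(f,r)$ vanishes unless $x \in B(x_0,R+r)$; therefore $E_{p,1}(f,r) \le \mu(B(x_0,R+r))$, which is bounded uniformly for $r \le 1$ by doubling, while for $r \ge 1$ Lemma~\ref{Lp comparison} gives $E_{p,1}(f,r) \le C r^{-p}\|f\|_{L^p}^p$. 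So $\sup_{r>0} E_{p,1}(f,r) < \infty$ and $\alpha_p \ge 1$.

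For (2) and (3), my plan relies on a common preliminary truncation. Given a non-constant $f \in \mathcal{B}^{\alpha,p}(X)$, possibly replacing $f$ by $-f$ we may assume $f \vee 0$ is non-constant. Set
\[
F = \bigl(((f \vee 0) \wedge M) - \delta\bigr)_+
\]
for suitable $\delta, M > 0$. Since each building block is $1$-Lipschitz, $|F(x)-F(y)| \le |f(x)-f(y)|$, so $F \in \mathcal{B}^{\alpha,p}(X)$; moreover $F$ is bounded, non-constant for a suitable choice of parameters, and supported in $\{f > \delta\}$, which has finite measure by Chebyshev. In particular $F \in L^q(X,\mu)$ for every $q \ge 1$. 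For (2), given $p_1 < p_2$ and $\alpha < \alpha_{p_1}$, I apply this construction to some non-constant $f \in \mathcal{B}^{\alpha,p_1}(X)$ and invoke the pointwise bound
\[
|F(x)-F(y)|^{p_2} \le (2\|F\|_\infty)^{p_2-p_1}|F(x)-F(y)|^{p_1},
\]
which gives $E_{p_2,\, p_1\alpha/p_2}(F,r) \le (2\|F\|_\infty)^{p_2-p_1} E_{p_1,\alpha}(F,r)$, so $F \in \mathcal{B}^{p_1\alpha/p_2,\,p_2}(X)$ and $p_2 \alpha_{p_2} \ge p_1 \alpha_{p_1}$ upon letting $\alpha \uparrow \alpha_{p_1}$.

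For (3), given $p_1 < p_2$ and $\alpha < \alpha_{p_2}$, I would produce $F$ as above from some non-constant $f \in \mathcal{B}^{\alpha,p_2}(X)$ and run a two-stage H\"older argument. First, H\"older on the inner average with exponents $p_2/p_1$ and $p_2/(p_2-p_1)$ yields
\[
\frac{1}{\mu(B(x,r))}\int_{B(x,r)} |F(y)-F(x)|^{p_1}\,d\mu(y) \le \left(\frac{1}{\mu(B(x,r))}\int_{B(x,r)} |F(y)-F(x)|^{p_2}\,d\mu(y)\right)^{p_1/p_2}.
\]
Since the inner integrand vanishes whenever $B(x,r)$ misses $\supp F$, for $r \le 1$ the outer $x$-integration is effectively over a fixed bounded set $K$ of finite measure (the $1$-neighborhood of $\supp F$). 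A second H\"older on the outer integral then yields $E_{p_1,\alpha}(F,r) \le \mu(K)^{(p_2-p_1)/p_2} E_{p_2,\alpha}(F,r)^{p_1/p_2}$; for large $r$, Lemma~\ref{Lp comparison} suffices. Therefore $F \in \mathcal{B}^{\alpha,p_1}(X)$ and $\alpha_{p_1} \ge \alpha$. The finiteness statement is then immediate: if $\alpha_p < \infty$ for some $p$, then (3) gives $\alpha_q \le \alpha_p < \infty$ for $q \ge p$, and (2) gives $\alpha_q \le (p/q)\alpha_p < \infty$ for $q \le p$.

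The main obstacle will be (3): unlike (2), no pointwise comparison yields the needed inequality, and the H\"older-based downgrade from $L^{p_2}$ to $L^{p_1}$ would fail on spaces of infinite total measure without the preliminary truncation that forces $\supp F$ to have finite measure.
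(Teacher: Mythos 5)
Your arguments for items (1) and (2) are correct and essentially the paper's: a distance-based Lipschitz function supported in a ball for (1), and a truncation combined with the pointwise bound $|F(x)-F(y)|^{p_2}\le(2\|F\|_\infty)^{p_2-p_1}|F(x)-F(y)|^{p_1}$ for (2). The closing deduction of finiteness from (2) and (3) is also fine.

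Item (3), however, has a genuine gap at the second (outer) H\"older step. You assert that for $r\le 1$ the outer integration takes place over ``a fixed bounded set $K$ of finite measure (the $1$-neighborhood of $\supp F$).'' Neither boundedness nor finiteness of measure is justified: Chebyshev gives $\mu(\{f>\delta\})<\infty$, but the $1$-neighborhood of a set of finite measure can have infinite measure, and nothing in the hypotheses rules this out. For instance, on $\mathbb{R}^n$ with $p_2<n$, take $f=\sum_k \phi\bigl((x-x_k)/r_k\bigr)$ with $\phi$ a fixed bump of height $1$, $\{x_k\}$ an enumeration of the integer lattice and $r_k=2^{-k}$; then $f\in W^{1,p_2}\subset\mathcal{B}^{1,p_2}$, yet $\{f>1/2\}$ meets every unit cube and the $1$-neighborhood of $\supp F$ has infinite measure, so the factor $\mu(K)^{(p_2-p_1)/p_2}$ you introduce is infinite. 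The repair is to carry the finite-mass object as a \emph{weight} inside the integral rather than as a restriction of the domain: writing $|F(x)-F(y)|^{p_1}\le\bigl(1_{\{F(x)\neq0\}}+1_{\{F(y)\neq0\}}\bigr)|F(x)-F(y)|^{p_1}$, Fubini and doubling reduce the second summand to the first, and then your inner Jensen step followed by H\"older of the outer integral against $1_{\{F\neq0\}}\in L^{p_2/(p_2-p_1)}(X,\mu)$ gives
\[
E_{p_1,\alpha}(F,r)\le C\,\mu(\{F\neq0\})^{\frac{p_2-p_1}{p_2}}\,E_{p_2,\alpha}(F,r)^{p_1/p_2}.
\]
This is precisely the mechanism of the paper's Lemma \ref{prop:convexityembed}, which dispenses with the truncation altogether by sending $f\mapsto|f|^{p_2/p_1}$ and using the weight $|f(x)|^{p_2-p_1}$, whose $L^{p_2/(p_2-p_1)}$-norm is $\|f\|_{L^{p_2}}^{p_2-p_1}$. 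With this one step corrected, the rest of your argument for (3) goes through.
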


For the first item, consider $x_0 \in X$ and $y_0 \neq x_0$. Denote $r=d(x_0,y_0)$.  The function
\[
\Psi(x)=d(x, X\setminus B(x_0,r/3) )
\]
is non-constant, Lipschitz, in $L^p(X,\mu)$ for  $p \ge 1$, and  seen to be in $\mathcal{B}^{1,p}(X)$. Thus $\alpha_p \ge 1$. 

For the second item, let $p \ge 1$ and $\alpha< \alpha_p$. Let $f \in \mathcal{B}^{\alpha, p}(X)$ be non-constant  and $q \ge p$. For $n\ge 1$, denote $f_n(x)=\max \{ -n , \min   \{ f(x) , n \} \}$. Then, $f_n \in \mathcal{B}^{\alpha,p}(X)\cap L^q(X,\mu)$ and moreover
\begin{align*}
 & \int_X \frac{1}{\mu(B(x,r))}\left( \int_{B(x,r)} |f_n(y)-f_n(x)|^q d\mu(y) \right)d\mu(x) \\
 \le &  \int_X \frac{1}{\mu(B(x,r))}\left( \int_{B(x,r)} (|f_n(y)|+|f_n(x)|)^{q-p} |f_n(y)-f_n(x)|^p d\mu(y) \right)d\mu(x) \\
 \le & 2^{q-p}n^{q-p} r^{\alpha p} \int_X \frac{1}{\mu(B(x,r))}\left( \int_{B(x,r)}  \frac{ |f(y)-f(x)|^p}{r^{\alpha p}} d\mu(y) \right)d\mu(x). 
\end{align*}
Therefore $f_n \in \mathcal{B}^{\alpha \frac{p}{q}, q}(X)$. For $n$ large enough $f_n$ is not constant. Thus, $\alpha_q \ge \alpha \frac{p}{q}$. This is true for all $\alpha < \alpha_p$, so $q \alpha_q \ge p \alpha_p$.  The third item follows from the following lemma:

\begin{lem}\label{prop:convexityembed}
If $1\leq q\leq p < \infty$, there exists a constant $C>0$ such that if  $f\in \mathcal{B}^{\alpha,p}(X)$, then $|f|^{p/q}\in \mathcal{B}^{\alpha,q}(X)$ and
\begin{equation}\label{eqn:convexitybound}
 \sup_{r >0} E_{q,\alpha} (|f|^{p/q},r)  \leq C  \|f\|_{L^p(X,\mu)}^{p-q} \, \sup_{r >0} E_{p,\alpha}(f,r)^{q/p}.
\end{equation}
\end{lem}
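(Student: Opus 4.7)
The plan is to reduce to an elementary pointwise convexity bound, then apply H\"older's inequality twice (once to the inner integral over $B(x,r)$, once to the outer integral over $X$), and finally dispose of the resulting $L^p$-factor using the doubling argument that already appeared in the proof of Lemma \ref{Lp comparison}. The case $q=p$ is trivial, so I will assume $p>q$.

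The starting point is the elementary inequality, valid for $a,b \ge 0$ and any exponent $t=p/q \ge 1$,
\[
\bigl| a^{t} - b^{t} \bigr| \le t \, \max(a,b)^{t-1} \, |a-b|,
\]
which follows from the mean value theorem applied to $u \mapsto u^{t}$. Applied with $a=|f(y)|$, $b=|f(x)|$ and raised to the $q$-th power, it yields the pointwise bound
\[
\bigl| |f(y)|^{p/q} - |f(x)|^{p/q} \bigr|^{q} \le (p/q)^{q} \, \max(|f(y)|,|f(x)|)^{p-q}\, |f(y)-f(x)|^{q}.
\]
Inserting this into $E_{q,\alpha}(|f|^{p/q},r)$ reduces the question to controlling an integrand of the form $|f(y)-f(x)|^{q} \max(|f(y)|,|f(x)|)^{p-q}$.

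The next step is to apply H\"older's inequality on the inner ball $B(x,r)$ with conjugate exponents $p/q$ and $p/(p-q)$, which factorizes the inner integral into
\[
\Bigl(\int_{B(x,r)} |f(y)-f(x)|^{p} d\mu(y)\Bigr)^{q/p} \Bigl(\int_{B(x,r)} \max(|f(y)|,|f(x)|)^{p} d\mu(y)\Bigr)^{(p-q)/p}.
\]
Since $\max(a,b)^{p} \le a^{p}+b^{p}$, splitting the weights $1/\mu(B(x,r)) = \mu(B(x,r))^{-q/p} \mu(B(x,r))^{-(p-q)/p}$ and $r^{-q\alpha} = r^{-p\alpha \cdot q/p}$, I can write the full integrand as a product of two factors whose exponents $q/p$ and $(p-q)/p$ sum to $1$.

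The final step is a second application of H\"older's inequality, now on the outer integral over $X$, again with exponents $p/q$ and $p/(p-q)$. This produces exactly the factor $E_{p,\alpha}(f,r)^{q/p}$ together with
\[
\Bigl(\int_{X} \frac{1}{\mu(B(x,r))} \int_{B(x,r)} (|f(y)|^{p}+|f(x)|^{p}) \, d\mu(y)\, d\mu(x) \Bigr)^{(p-q)/p}.
\]
The term with $|f(x)|^p$ integrates trivially to $\|f\|_{L^p}^{p}$, and the term with $|f(y)|^p$ is handled by Fubini followed by the doubling inequality $\int_{B(y,r)} d\mu(x)/\mu(B(x,r)) \le C$, exactly as in Lemma \ref{Lp comparison}. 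Thus the second factor is at most $C\|f\|_{L^p(X,\mu)}^{p-q}$, and taking the supremum over $r>0$ yields \eqref{eqn:convexitybound}. The main bookkeeping obstacle is keeping the exponents $q/p$ and $(p-q)/p$ aligned with the two successive H\"older applications, but once that is set up the inequality drops out with no further work.
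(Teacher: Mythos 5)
Your proposal is correct and follows essentially the same route as the paper: the same pointwise convexity inequality $|a^{p/q}-b^{p/q}|^q \le (p/q)^q \max\{a,b\}^{p-q}|a-b|^q$, two applications of H\"older's inequality with conjugate exponents $p/q$ and $p/(p-q)$, and the Fubini-plus-doubling bound $\int_{B(y,r)} d\mu(x)/\mu(B(x,r)) \le C$ from Lemma \ref{Lp comparison}. The only cosmetic difference is the order of operations --- the paper symmetrizes $\max\{|f(x)|,|f(y)|\}^{p-q} \le |f(x)|^{p-q}+|f(y)|^{p-q}$ and applies Fubini/doubling \emph{before} the H\"older steps so as to reduce everything to the $|f(x)|^{p-q}$ term, whereas you carry the $\max$ through both H\"older applications and dispose of it at the end --- but the estimates and constants are the same.
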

\begin{proof} 
We use  for any $a,b \ge 0$ such that $a\neq b$, the elementary inequality
\[
\frac{|a^{p/q}-b^{p/q}|}{|a-b|}\leq \frac{p}{q} \max\{a,b\}^{\frac{p}{q}-1}.
\]
Equivalently, 
\[
|a^{p/q}-b^{p/q}|^q \leq \brak{\frac{p}{q}}^q \max\{a,b\}^{p-q} |a-b|^{q}.
\]
Using this elementary inequality, one has
 \begin{align}\label{espn:ine3}
	 & \lefteqn{\int_X \frac{1}{\mu(B(x,r))} \int_{B(x,r)} \bigl| |f(x)|^{p/q}-|f(y)|^{p/q}\bigr|^q \,d\mu(y) d\mu(x) } \quad& \notag \\
	&\leq \brak{\frac{p}{q}}^q \int_X \frac{1}{\mu(B(x,r))} \int_{B(x,r)}  \bigl( |f(x)|^{p-q}+|f(y)|^{p-q}\bigr)\bigl| |f(x)|-|f(y)|\bigr|^q \,d\mu(y) d\mu(x)   \notag \\
	&\leq \brak{\frac{p}{q}}^q \int_X \frac{1}{\mu(B(x,r))} \int_{B(x,r)}  \bigl( |f(x)|^{p-q}+|f(y)|^{p-q}\bigr)\bigl| f(x)-f(y)\bigr|^q \,d\mu(y) d\mu(x) .
\end{align}
We now observe that by Fubini's theorem and the volume doubling property
\begin{align*}
   & \int_X \frac{1}{\mu(B(x,r))} \int_{B(x,r)}  |f(y)|^{p-q} \bigl| f(x)-f(y)\bigr|^q\,\,d\mu(y) d\mu(x)  \\
   = & \int_X  \int_{B(x,r)} \frac{1}{\mu(B(y,r))}  |f(x)|^{p-q} \bigl| f(x)-f(y)\bigr|^q\,\,d\mu(y) d\mu(x) \\
   \le & C \int_X  \int_{B(x,r)} \frac{1}{\mu(B(y,2r))}  |f(x)|^{p-q} \bigl| f(x)-f(y)\bigr|^q\,\,d\mu(y) d\mu(x) \\
   \le & C  \int_X  \frac{1}{\mu(B(x,r))}  \int_{B(x,r)}  |f(x)|^{p-q} \bigl| f(x)-f(y)\bigr|^q\,\,d\mu(y) d\mu(x).
\end{align*}
Thus, applying H\"older's inequality we have
 \begin{align*}	
  & \lefteqn{\int_X \frac{1}{\mu(B(x,r))} \int_{B(x,r)} \bigl| |f(x)|^{p/q}-|f(y)|^{p/q}\bigr|^q \,d\mu(y) d\mu(x) }&\\
	\leq& C \int_X \frac{ |f(x)|^{p-q}}{\mu(B(x,r))} \left( \int_{B(x,r)}   \bigl| f(x)-f(y)\bigr|^q\,\,d\mu(y) \right) \,d\mu(x) \\
	\leq& C \int_X |f(x)|^{p-q} \Bigl( \frac{1}{\mu(B(x,r))} \int_{B(x,r)}  \bigl| f(x)-f(y)\bigr|^p d\mu(y) \Bigl)^{q/p} \, d\mu(x)\\
	\leq& C \|f\|_{L^p(X,\mu)}^{p-q} \Bigl( \int_X  \frac{1}{\mu(B(x,r))}\int_{B(x,r)}  \bigl| f(x)-f(y)\bigr|^p d\mu(y) d\mu(x) \Bigl)^{q/p},
	\end{align*}
which implies~\eqref{eqn:convexitybound}.
\end{proof}

\begin{defn}
The metric space $(X,d)$ is said to satisfy the chaining condition if there exists a constant $C_h \ge 1$ such that for every $x,y\in X$, and $n \ge 1$ there is a family of points $x_0=x,\cdots,x_n=y$ of $X$ such that for $j=0,\cdots,n-1$, $d(x_j,x_{j+1}) \le \frac{C_h}{n}d(x,y)$.
\end{defn}

For instance, geodesic spaces satisfy the chaining condition.

\begin{theorem}\label{estimate sup}
Assume that $(X,d,\mu)$ satisfies the chaining condition. Then, for every $p \ge 1$ we have  $\alpha_p \le 1+\frac{Q}{p}$.
\end{theorem}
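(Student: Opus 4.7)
The plan is to argue by contradiction: if $\alpha_p > 1 + Q/p$, then functions in $\mathcal{B}^{\alpha,p}(X)$ with $\alpha$ slightly bigger than $1+Q/p$ admit a locally H\"older continuous representative with exponent $\lambda = \alpha - Q/p > 1$, and the chaining condition forces any such function to be constant. This contradicts the existence of non-constant elements in $\mathcal{B}^{\alpha,p}(X)$.

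First, suppose for contradiction that $\alpha_p > 1 + Q/p$, and pick $\alpha \in (1+Q/p,\alpha_p)$. By definition of $\alpha_p$, there exists a non-constant $f \in \mathcal{B}^{\alpha,p}(X)$. Since $p > Q/\alpha$, Theorem \ref{T:Morrey_local} (combined with Remark \ref{regular version}) provides a function $g : X \to \R$ with $g = f$ $\mu$-a.e. such that, for every $x_0 \in X$ and $R > 0$, $g$ is H\"older continuous of exponent $\lambda = \alpha - Q/p > 1$ on $B(x_0,R)$ with respect to pairs at distance less than $R/3$, with some constant $C(R) > 0$.

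Next, fix two points $x,y \in X$ with $x \neq y$, set $D := d(x,y)$ and choose $R > 3(C_h + 1)D$. For each $n \ge 1$, the chaining condition furnishes points $x_0 = x, x_1, \ldots, x_n = y$ with $d(x_j, x_{j+1}) \le C_h D/n$. The triangle inequality gives $d(x, x_j) \le j C_h D/n \le C_h D < R$, so all the $x_j$ lie in $B(x, R)$. For $n$ large enough, $d(x_j, x_{j+1}) < R/3$, so the local H\"older estimate applies to each consecutive pair and yields
\[
|g(x) - g(y)| \le \sum_{j=0}^{n-1} |g(x_j) - g(x_{j+1})| \le n \cdot C(R) \left( \frac{C_h D}{n} \right)^{\lambda} = C(R) C_h^{\lambda} D^{\lambda} n^{1-\lambda}.
\]
Since $\lambda > 1$, letting $n \to +\infty$ gives $g(x) = g(y)$. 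As $x,y \in X$ were arbitrary, $g$ is constant on $X$, and hence $f$ is constant $\mu$-a.e., contradicting the choice of $f$. Therefore $\alpha_p \le 1 + Q/p$.

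The main obstacle is of a bookkeeping nature: Theorem \ref{T:Morrey_local} only provides \emph{local} H\"older control on balls $B(x_0,R)$ for pairs at distance $<R/3$, so one must verify that the chained points $x_j$ remain inside a single ball and that, for $n$ large, consecutive chain points fall within this allowed distance scale. The bound $d(x,x_j) \le C_h D$ from the triangle inequality handles the first issue, and choosing $n \gtrsim C_h D / R$ handles the second, making $C(R)$ a harmless constant that gets beaten by the factor $n^{1-\lambda} \to 0$.
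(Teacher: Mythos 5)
Your proof is correct and follows essentially the same route as the paper's: invoke Theorem \ref{T:Morrey_local} and Remark \ref{regular version} to get a locally H\"older representative with exponent $\lambda=\alpha-Q/p>1$, then use the chaining condition and the bound $n\cdot(C_hD/n)^\lambda\to 0$ to force constancy. The only (immaterial) differences are that you phrase it as a contradiction and center the ball at $x$ with $R$ chosen large in terms of $d(x,y)$, whereas the paper works locally around an auxiliary point $z$ and then lets $z,R$ vary.
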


\begin{proof}
Let $f \in \mathcal{B}^{\alpha,p}(X)$ with $\alpha >1+\frac{Q}{p}$. From Theorem \ref{T:Morrey_local} and Remark \ref{regular version}, we can assume that the function $f$ is  locally H\"older continuous with exponent $>1$ which implies that $f$ is constant in view of the chaining condition. Indeed, let $z \in X$ and $R>0$.  Let $x,y \in X$ with $d(z,x)< \frac{R}{3}$, $d(z,y)<\frac{R}{3}$ and $d(x,y)<\frac{R}{3C_h}$. From the chaining condition, for an integer $n \ge 2$ one can consider a family of points  $x_0=x,\cdots,x_n=y$ of $X$ such that for $j=0,\cdots,n-1$, $d(x_j,x_{j+1}) \le \frac{C_h}{n}d(x,y)$. One has $x_j \in B(z,R)$ and $d(x_j,x_{j+1})<\frac{R}{3}$. Therefore, from Theorem \ref{T:Morrey_local} 
\begin{align*}
|f(x)-f(y)| & \le |f(x)-f(x_1)|+\cdots+ |f(x_{n-1})-f(y)| \\
 &\le  C    \sum_{j=0}^{n-1} d(x_j,x_{j+1})^{\alpha-Q/p} \le C n \frac{1}{n^{\alpha-Q/p}} d(x,y)^{\alpha-Q/p}. 
\end{align*}
Letting $n\to +\infty$ yields $f(x)=f(y)$. By arbitrariness of $z$ and $R$ one concludes that $f$ is constant.
\end{proof}

\subsection{Korevaar-Schoen spaces and  $\mathcal{P}(p,\alpha)$ }

For $p\ge 1$, $\alpha \ge 0$, the Korevaar-Schoen space $KS^{\alpha,p}(X)$ is defined as
\[
KS^{\alpha,p}(X)=\left\{ f \in L^p(X,\mu); \limsup_{r  \to 0} E_{p,\alpha} (f,r)<+\infty \right\}
\]
equipped with the norm given by
\[
\| f \|^p_{KS^{\alpha,p}(X)}=\| f \|^p_{L^p(X,\mu)} +  \limsup_{r  \to 0} E_{p,\alpha} (f,r).
\]

From Lemma \ref{local-global}, as a set  $KS^{\alpha,p}(X)=\mathcal{B}^{\alpha,p}(X) $ and obviously $\| \cdot \|_{KS^{\alpha,p}(X)}\le \| \cdot \|_{\mathcal{B}^{\alpha,p}(X)} $.

\begin{defn}\label{property Pp}
Let $p \ge 1$, $\alpha \ge 0$. We will say that $\mathcal{P}(p,\alpha)$ holds if $KS^{\alpha,p}(X)$ contains non-constant functions and there exists a constant $C \ge 1$ such that for every $f \in KS^{\alpha,p}(X)$,
\[
\sup_{r  > 0} E_{p,\alpha} (f,r) \le C \liminf_{r  \to 0} E_{p,\alpha} (f,r).
\]
\end{defn}

\begin{remark}
We point out that the weaker property  
\[
\sup_{r  > 0} E_{p,\alpha} (f,r) \le C \limsup_{r  \to 0} E_{p,\alpha} (f,r)
\]
does not suffice to develop a rich theory, since the super-additivity of the $\liminf$ is used in crucial parts of the arguments, for instance to obtain the Sobolev embeddings of Section \ref{Section sobo}.
\end{remark}

\begin{lem}
Let $p \ge 1$, $\alpha \ge 0$. If $\mathcal{P}(p,\alpha)$ holds then $\alpha=\alpha_p$.
\end{lem}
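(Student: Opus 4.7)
The plan is to show the two inequalities $\alpha \leq \alpha_p$ and $\alpha_p \leq \alpha$ separately.

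\textbf{Easy direction ($\alpha \leq \alpha_p$).} By definition of $\mathcal{P}(p,\alpha)$, the space $KS^{\alpha,p}(X)$ contains non-constant functions. Since $KS^{\alpha,p}(X)=\mathcal{B}^{\alpha,p}(X)$ as sets (by Lemma \ref{local-global}), the space $\mathcal{B}^{\alpha,p}(X)$ contains non-constant functions, so $\alpha \leq \alpha_p$ directly from the definition of $\alpha_p$.

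\textbf{Main direction ($\alpha_p \leq \alpha$).} The plan is to show that for every $\beta>\alpha$, the space $\mathcal{B}^{\beta,p}(X)$ contains only constant functions; this will force $\alpha_p \leq \alpha$. Fix $\beta>\alpha$ and $f\in \mathcal{B}^{\beta,p}(X)$. The key observation is the scaling identity
\[
E_{p,\alpha}(f,r) \;=\; r^{p(\beta-\alpha)} E_{p,\beta}(f,r),
\]
which follows directly from the definition of $E_{p,\alpha}$. Since $\sup_{r>0} E_{p,\beta}(f,r)<+\infty$ and $\beta-\alpha>0$, this identity yields
\[
\liminf_{r\to 0} E_{p,\alpha}(f,r) \;=\; 0.
\]
On the other hand, the inclusion $\mathcal{B}^{\beta,p}(X)\subset \mathcal{B}^{\alpha,p}(X)=KS^{\alpha,p}(X)$ (Corollary established above) puts $f$ in the class where $\mathcal{P}(p,\alpha)$ applies. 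Invoking $\mathcal{P}(p,\alpha)$ gives
\[
\sup_{r>0} E_{p,\alpha}(f,r) \;\leq\; C \liminf_{r\to 0} E_{p,\alpha}(f,r) \;=\; 0,
\]
so $E_{p,\alpha}(f,r)=0$ for every $r>0$.

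\textbf{From $E_{p,\alpha}(f,r)\equiv 0$ to $f$ constant.} The final step is to deduce that $f$ is constant $\mu$-almost everywhere. For each fixed $r>0$, the vanishing of $E_{p,\alpha}(f,r)$ gives a $\mu$-null set $N_r\subset X$ such that for every $x\in X\setminus N_r$ one has $f(y)=f(x)$ for $\mu$-a.e.\ $y\in B(x,r)$. Taking the countable union $N=\bigcup_{k\in\mathbb{N}} N_k$ and fixing any $x_0\in X\setminus N$, one concludes that $f(y)=f(x_0)$ for $\mu$-a.e.\ $y\in B(x_0,k)$ for every integer $k\geq 1$, hence for $\mu$-a.e.\ $y\in \bigcup_k B(x_0,k)=X$. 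Therefore $f$ is constant $\mu$-a.e., contradicting the assumption that $\mathcal{B}^{\beta,p}(X)$ contains a non-constant function for $\beta>\alpha$. This shows $\alpha_p \leq \alpha$ and completes the proof.

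The argument is essentially bookkeeping; the only mildly delicate point is the last step, passing from the a.e.\ local constancy (at every scale $r$) to a.e.\ global constancy, which is handled via a countable exhaustion of $X$ by balls of integer radii centered at a generic good point.
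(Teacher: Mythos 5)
Your proof is correct and follows essentially the same route as the paper's (one-line) argument: for $\beta>\alpha$ the scaling identity forces $\liminf_{r\to0}E_{p,\alpha}(f,r)=0$, hence $\sup_{r>0}E_{p,\alpha}(f,r)=0$ by $\mathcal{P}(p,\alpha)$, hence $f$ is constant. You simply spell out the two details the paper leaves implicit — that $\alpha\le\alpha_p$ follows from the non-constancy clause in $\mathcal{P}(p,\alpha)$, and the countable-exhaustion argument deducing a.e.\ constancy from $E_{p,\alpha}(f,r)\equiv 0$ — both of which are handled correctly.
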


\begin{proof}
If $\beta >\alpha$, and $f \in \mathcal{B}^{\beta,p}(X)$, we have $\liminf_{r  \to 0} E_{p,\alpha} (f,r)=0$ and thus $\sup_{r  > 0} E_{p,\alpha} (f,r)=0$ which yields that $f$ is constant.
\end{proof}

\begin{defn}
When $\alpha=\alpha_p$ the space $KS^{\alpha,p}(X)$  is referred to as  the Korevaar-Schoen-Sobolev space and  for $f \in KS^{\alpha,p}(X)$ we will denote
\begin{align}\label{p-variation}
\mathrm{Var}_p (f)= \liminf_{r  \to 0} E_{p,\alpha_p} (f,r)^{1/p}.
\end{align}
\end{defn}

An important property of $\mathrm{Var}_p$ is that it is a Sobolev quasi-seminorm in the sense of Bakry-Coulhon-Ledoux-Saloff Coste, see \cite[Section 2]{MR1386760}:

\begin{theorem}\label{L:local_norm_approx:a}
If  $\mathcal{P}(p,\alpha)$ holds, then $\mathrm{Var}_p $ is a Sobolev quasi-seminorm, i.e. it satisfies the following properties:
\begin{itemize}
\item There exists a constant $C \ge 1$ such  that for every $f,g\in KS^{\alpha,p}(X)$,
\[
\mathrm{Var}_p (f+g) \le C (\mathrm{Var}_p (f)+\mathrm{Var}_p (g));
\]
\item If $f \in KS^{\alpha,p}(X)$ is such that $\mathrm{Var}_p (f)=0$ then $f$ is constant;
\item For every $s,t \ge 0$, $\mathrm{Var}_p ((f-t)^+ \wedge s ) \le \mathrm{Var}_p (f )$;
\item There exists a constant $C>0$ such that for any nonnegative $f \in KS^{\alpha,p}(X)$  and any $\rho>1$,
\[
\left( \sum_{k \in \mathbb{Z}} \mathrm{Var}_p (f_{\rho,k})^p \right)^{1/p} \le C \mathrm{Var}_p (f),
\]
where $f_{\rho,k}:=(f-\rho^k)^+ \wedge (\rho^k(\rho-1))$, $k \in \mathbb{Z}$. 
\end{itemize}
\end{theorem}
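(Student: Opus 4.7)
The plan is to verify the four properties in turn. The hypothesis $\mathcal{P}(p,\alpha)$ enters crucially in properties (1) and (2), while properties (3) and (4) stem from essentially pointwise inequalities on the integrands defining $E_{p,\alpha}$, passed to the $\liminf$ by Fatou's lemma.

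For property (1), I observe that for each fixed $r > 0$, the map $f \mapsto E_{p,\alpha}(f,r)^{1/p}$ is the $L^p$-norm of the ``discrete gradient'' $(x,y) \mapsto r^{-\alpha}(f(y) - f(x))$ with respect to the product measure $\mathbf{1}_{\{d(x,y) < r\}}\, \mu(B(x,r))^{-1} d\mu(y) d\mu(x)$. Minkowski's inequality then yields
$$E_{p,\alpha}(f+g,r)^{1/p} \le E_{p,\alpha}(f,r)^{1/p} + E_{p,\alpha}(g,r)^{1/p}.$$
Taking $\liminf_{r \to 0}$ and using $\liminf(a_r + b_r) \le \liminf a_r + \limsup b_r$, I invoke $\mathcal{P}(p,\alpha)$ to bound $\limsup_{r \to 0} E_{p,\alpha}(g,r)^{1/p} \le \sup_r E_{p,\alpha}(g,r)^{1/p} \le C^{1/p}\mathrm{Var}_p(g)$; symmetrizing the roles of $f$ and $g$ and averaging yields the quasi-triangle inequality with constant $\tfrac{1+C^{1/p}}{2}$.

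Property (2) follows directly: if $\mathrm{Var}_p(f) = 0$, then $\mathcal{P}(p,\alpha)$ forces $\sup_{r > 0} E_{p,\alpha}(f,r) = 0$, so $f(x) = f(y)$ for $\mu \otimes \mu$-a.e.\ $(x,y)$ with $d(x,y) < r$, for every $r > 0$. Since for any fixed $x_0$ we have $X = \bigcup_{n} B(x_0, n)$, taking the union over a countable sequence $r_n \uparrow \infty$ and using Fubini forces $f$ to be $\mu$-a.e.\ constant. Property (3) is even simpler: the map $u \mapsto (u - t)^+ \wedge s$ is $1$-Lipschitz on $\mathbb R$, so $|((f(y)-t)^+ \wedge s) - ((f(x)-t)^+ \wedge s)| \le |f(y)-f(x)|$ pointwise; this gives $E_{p,\alpha}((f-t)^+ \wedge s, r) \le E_{p,\alpha}(f, r)$ for every $r > 0$, and passing to the $\liminf$ completes the argument.

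The main task is property (4). Its heart is the pointwise real-variable inequality
$$\sum_{k \in \mathbb{Z}} |\psi_k(b) - \psi_k(a)|^p \le C(\rho, p)\, |b-a|^p, \qquad 0 \le a \le b,$$
where $\psi_k(t) := (t - \rho^k)^+ \wedge \rho^k(\rho-1)$. To prove it, suppose $a \in [\rho^m, \rho^{m+1})$ and $b \in [\rho^n, \rho^{n+1})$ with $m \le n$. Only finitely many terms are nonzero: two boundary contributions $\rho^{m+1} - a$ and $b - \rho^n$, and a geometric middle sum $\sum_{m < k < n} \rho^{kp}(\rho-1)^p \lesssim \rho^{np}$; comparing with $b - a \ge \rho^n - \rho^{m+1} \ge (\rho-1)\rho^{n-1}$ (when $n \ge m+2$; the case $m = n$ is trivial and $n = m+1$ uses $x^p + y^p \le (x+y)^p$) delivers the claim. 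Substituting $a = f(x), b = f(y)$ (permissible since $f \ge 0$), integrating against the product measure, and swapping sum and integral by Tonelli gives
$$\sum_{k \in \mathbb{Z}} E_{p,\alpha}(f_{\rho,k}, r) \le C(\rho, p)\, E_{p,\alpha}(f, r) \qquad \text{for every } r > 0.$$
Fatou's lemma for sums (the superadditivity of $\liminf$) then yields $\sum_k \mathrm{Var}_p(f_{\rho,k})^p \le C(\rho,p)\,\mathrm{Var}_p(f)^p$, which is the desired estimate. I expect the pointwise inequality to be the main obstacle: it is elementary but requires careful bookkeeping of the cases and of the dependence on $\rho$, and it is there that the geometric partition $\{\rho^k\}_{k \in \mathbb{Z}}$ plays its quantitative role.
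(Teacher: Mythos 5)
Your proof is correct and follows essentially the same route as the paper: items (1)--(3) are handled exactly as intended (using $\mathcal{P}(p,\alpha)$ to pass between $\liminf$ and $\sup$, plus the $1$-Lipschitz truncation bound), and for item (4) you carry out explicitly what the paper delegates to Lemma 7.1 of Bakry--Coulhon--Ledoux--Saloff-Coste together with the superadditivity of the $\liminf$. As a minor simplification, your pointwise inequality holds with constant $1$ (also covering $a=0$), since $\psi_k(b)-\psi_k(a)$ equals the length of $[a,b]\cap[\rho^k,\rho^{k+1}]$, so $\sum_k(\psi_k(b)-\psi_k(a))^p\le\bigl(\sum_k(\psi_k(b)-\psi_k(a))\bigr)^p\le(b-a)^p$.
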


\begin{proof}
The first two items follow from the fact that $\mathcal{P}(p,\alpha)$ implies
\[
\mathrm{Var}_p (f) \le \sup_{r  > 0} E_{p,\alpha} (f,r)^{1/p} \le C \mathrm{Var}_p (f).
\]
The third item is immediate, because $E_{p,\alpha} ((f-t)^+ \wedge s,r)  \le E_{p,\alpha} (f,r)$.  The fourth item can be proved as in the proof of Lemma 7.1 in \cite{MR1386760} and by using then the super-additivity property of $\liminf$.
\end{proof}

\begin{remark}
Under $\mathcal{P}(p,\alpha)$ the functionals $\limsup_{r \to 0} E_{p,\alpha} (f,r)^{1/p}$ and  $\sup_{r > 0} E_{p,\alpha} (f,r)^{1/p}$ are also Sobolev quasi-seminorms.
\end{remark}

\begin{remark}

If  $\mathcal{P}(p,\alpha)$ holds, then one can rewrite the conclusion of Theorem \ref{T:Morrey_local} as
\[
\mu \otimes \mu- \esssup\limits_{x,y \in B(x_0,R), 0<d(x,y)<R/3}\frac{|f(x)-f(y)|}{d(x,y)^{\alpha-\frac{Q}{p}}} \leq C \mathrm{Var}_p (f).
\]
This can be interpreted as a Morrey inequality for the Sobolev space $KS^{\alpha_p,p}(X)$.
\end{remark}

\subsection{Reflexivity and separability of the Korevaar-Schoen-Sobolev spaces}

In this section we prove that if the property $\mathcal{P}(p,\alpha_p)$ holds with $p>1$ then the  space $KS^{\alpha_p,p}(X)$ is reflexive and separable. We use the following lemma.

\begin{lem}\label{uniform}
Let $(Z,\| \cdot \|)$ be a Banach space. If for every $\varepsilon >0$ there exists $\delta >0$ with the property that $\| x+y \| \le 2 (1-\delta)$ whenever $x,y \in Z$ satisfy  $\| x \| <1$, $\| y \|<1$ and $\| x-y \|>\varepsilon$, then $(Z,\| \cdot \|)$ is reflexive. 
\end{lem}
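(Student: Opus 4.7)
The plan is to recognize the hypothesis as (a slight variant of) the classical definition of \emph{uniform convexity} for $(Z,\|\cdot\|)$ and then to invoke the Milman--Pettis theorem, which asserts that every uniformly convex Banach space is reflexive.

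First, I would reduce the hypothesis to the standard (closed-ball) form of uniform convexity, namely: for every $\varepsilon>0$ there exists $\delta>0$ such that $\|x\|\le 1$, $\|y\|\le 1$, $\|x-y\|\ge\varepsilon$ imply $\|x+y\|\le 2(1-\delta)$. Given such $x,y$, set $x_n:=(1-1/n)x$ and $y_n:=(1-1/n)y$; these lie in the open unit ball and satisfy $\|x_n-y_n\|=(1-1/n)\|x-y\|>\varepsilon/2$ for $n$ large. Applying the stated hypothesis with $\varepsilon/2$ in place of $\varepsilon$ produces a $\delta'>0$ such that $\|x_n+y_n\|\le 2(1-\delta')$, and letting $n\to\infty$ yields the closed-ball version. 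Thus $(Z,\|\cdot\|)$ is uniformly convex in the classical sense.

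Next, I would invoke the Milman--Pettis theorem. The idea of its proof is as follows. Let $J:Z\to Z^{**}$ be the canonical isometric embedding; it suffices to show that $J(B_Z)$ equals the closed unit ball $B_{Z^{**}}$. By Goldstine's theorem, $J(B_Z)$ is weak-$*$ dense in $B_{Z^{**}}$. Given $z\in B_{Z^{**}}$ with $\|z\|=1$, one approximates it weak-$*$ by a net $(J(x_\alpha))$ with $x_\alpha\in B_Z$; uniform convexity forces $(x_\alpha)$ to be Cauchy in the norm topology. Indeed, if some pair $x_\alpha,x_\beta$ in the net satisfied $\|x_\alpha-x_\beta\|\ge\varepsilon$, uniform convexity would yield $\|x_\alpha+x_\beta\|\le 2(1-\delta)$, whereas $J(x_\alpha+x_\beta)\to 2z$ weak-$*$ combined with the weak-$*$ lower semicontinuity of the bidual norm would force $\|x_\alpha+x_\beta\|\to 2$, a contradiction. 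The norm limit of the Cauchy net is the required preimage of $z$, proving reflexivity.

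The main difficulty is not the reduction to uniform convexity, which is a routine approximation argument, but the invocation of Milman--Pettis itself, whose proof relies on Goldstine's theorem and the weak-$*$ lower semicontinuity of the bidual norm. In the spirit of this survey, the most natural course is to cite a standard reference (for instance Brezis, \emph{Functional Analysis, Sobolev Spaces and Partial Differential Equations}, or Megginson, \emph{An Introduction to Banach Space Theory}) rather than reproduce the full argument.
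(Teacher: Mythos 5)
Your proposal is correct and follows essentially the same route as the paper: a scaling argument (you use $(1-1/n)x$ where the paper uses $\eta x$ with $\eta\to 1^-$) reduces the open-ball hypothesis to the classical definition of uniform convexity, after which the Milman--Pettis theorem is invoked. The additional sketch of the Milman--Pettis proof is fine but not needed, since the paper likewise simply cites the theorem.
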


\begin{proof}
From Milman–Pettis’ theorem, it is enough to prove that the stated property implies that $(Z,\| \cdot \|)$ is uniformly convex. Let $\varepsilon >0$ and $\delta >0$ be as in the stated property. Suppose that $x,y \in Z$ are such that $\| x \| =1$, $\| y \|=1$ and $\| x-y \|>\varepsilon$. Then for $0<\eta <1$ sufficiently close to 1 we have that $\| \eta x \| <1$, $\| \eta y \|<1$ and $\|\eta  x-\eta y \|>\varepsilon$. Therefore we obtain
\[
\| \eta x+\eta y \| \le 2 (1-\delta).
\]
Passing to the limit $\eta \to 1^{-}$ yields $\|  x+ y \| \le 2 (1-\delta)$.
\end{proof}

\begin{theorem}
Let $p>1$. If  $\mathcal{P}(p,\alpha_p)$ holds, then  $(KS^{\alpha_p,p}(X), \| \cdot \|_{KS^{\alpha_p,p}(X)})$ is a reflexive and separable Banach space.
\end{theorem}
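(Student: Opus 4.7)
The plan is to establish reflexivity by constructing an equivalent norm on $KS^{\alpha_p,p}(X)$ that is uniformly convex (so that Lemma \ref{uniform} applies), and then to deduce separability from reflexivity together with the continuous embedding into $L^p(X,\mu)$ via a weak-metrizability argument. The principal obstacle is the $\liminf$ appearing in $\|\cdot\|_{KS^{\alpha_p,p}(X)}$: a direct Clarkson-type computation on the $KS$-norm fails because $\liminf$ is only super-additive; the assumption $\mathcal{P}(p,\alpha_p)$ is what will let us replace $\liminf$ by a genuine limit.

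To build the equivalent norm, I would fix a sequence $r_n \downarrow 0$ and a non-principal ultrafilter $\mathcal{U}$ on $\mathbb{N}$, and set
\[
\|f\|_{\mathcal{U}}^p := \|f\|_{L^p(X,\mu)}^p + \lim_{\mathcal{U}} E_{p,\alpha_p}(f, r_n).
\]
Since $\lim_{\mathcal{U}}$ is additive and monotone on bounded sequences of reals, and each functional $f \mapsto E_{p,\alpha_p}(f,r_n)^{1/p}$ is a seminorm (the $L^p$-norm of $(x,y) \mapsto f(x)-f(y)$ against an explicit measure on $X\times X$), $\|\cdot\|_{\mathcal{U}}$ is a norm. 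The chain
\[
\liminf_{r\to 0} E_{p,\alpha_p}(f,r) \le \liminf_n E_{p,\alpha_p}(f,r_n) \le \lim_{\mathcal{U}} E_{p,\alpha_p}(f,r_n) \le \sup_{r>0} E_{p,\alpha_p}(f,r) \le C\liminf_{r\to 0} E_{p,\alpha_p}(f,r),
\]
whose last inequality is precisely $\mathcal{P}(p,\alpha_p)$, then shows that $\|\cdot\|_{\mathcal{U}}$ and $\|\cdot\|_{KS^{\alpha_p,p}(X)}$ are equivalent.

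The core step is uniform convexity of $\|\cdot\|_{\mathcal{U}}$. For $p\ge 2$, Clarkson's first inequality applied simultaneously in $L^p(X,\mu)$ and in the $L^p$-realization underlying $E_{p,\alpha_p}(\cdot,r_n)$ yields, after summation,
\[
\bigl(\|f+g\|_{L^p}^p+E_{p,\alpha_p}(f+g,r_n)\bigr)+\bigl(\|f-g\|_{L^p}^p+E_{p,\alpha_p}(f-g,r_n)\bigr) \le 2^{p-1}\bigl(\|f\|_{L^p}^p+E_{p,\alpha_p}(f,r_n)+\|g\|_{L^p}^p+E_{p,\alpha_p}(g,r_n)\bigr).
\]
Taking $\lim_{\mathcal{U}}$ on both sides (using its additivity) produces Clarkson's inequality for $\|\cdot\|_{\mathcal{U}}$, from which uniform convexity (with modulus $\delta_p(\varepsilon)=1-(1-(\varepsilon/2)^p)^{1/p}$) follows by the standard argument. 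For $1<p<2$ the same scheme works via Clarkson's second inequality (or Hanner's). Lemma \ref{uniform} then yields reflexivity of $(KS^{\alpha_p,p}(X),\|\cdot\|_{\mathcal{U}})$, hence of $(KS^{\alpha_p,p}(X),\|\cdot\|_{KS^{\alpha_p,p}(X)})$ by invariance of reflexivity under equivalent norms.

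For separability, I would use that the continuous injection $\iota:KS^{\alpha_p,p}(X)\hookrightarrow L^p(X,\mu)$ is weakly continuous. By the reflexivity just established, the closed unit ball $B$ of $KS^{\alpha_p,p}(X)$ is weakly compact. Since $X$ is separable and doubling, $L^p(X,\mu)$ and its dual $L^{p'}(X,\mu)$ are separable, so bounded subsets of $L^p(X,\mu)$ are weakly metrizable; in particular $\iota(B)$ is. As $\iota|_B$ is a weakly continuous bijection from a weakly compact space onto a Hausdorff image, it is a weak homeomorphism, making $(B,\text{weak})$ a compact metrizable, hence separable, space. Picking a countable weakly dense $D\subset B$ and invoking Mazur's theorem (in a reflexive space the weak closure of a convex set equals its norm closure), the rational convex combinations of $D$ form a countable norm-dense subset of $B$, and therefore of $KS^{\alpha_p,p}(X)$.
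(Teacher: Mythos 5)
Your argument is correct, but the reflexivity part takes a genuinely different route from the paper. The paper works with the norm $\|\cdot\|_{KS^{\alpha_p,p}(X)}$ itself: it does not attempt to prove uniform convexity of that norm (which is obstructed by the mere super-additivity of $\liminf$), but instead verifies the weakened hypothesis of Lemma \ref{uniform} directly, by choosing $r_0$ so that $\|f\|_{L^p}^p+E_{p,\alpha_p}(f,r)<1$ for $r<r_0$ (and likewise for $g$), using $\mathcal{P}(p,\alpha_p)$ to produce $r_1$ with $\|f-g\|_{L^p}^p+E_{p,\alpha_p}(f-g,r)>\varepsilon^p/C$ for $r<r_1$, and then applying Clarkson at a single scale $r<r_0\wedge r_1$. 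You instead renorm: the ultrafilter limit restores additivity, so $\|\cdot\|_{\mathcal U}$ satisfies Clarkson's inequalities exactly and is honestly uniformly convex, and $\mathcal{P}(p,\alpha_p)$ enters only to show $\|\cdot\|_{\mathcal U}\simeq\|\cdot\|_{KS^{\alpha_p,p}(X)}$ (your chain of inequalities is right, noting that $\limsup_{r\to 0}E\le\sup_{r>0}E\le C\liminf_{r\to 0}E$ makes all four quantities comparable). Your route buys a slightly stronger conclusion --- an explicit uniformly convex equivalent norm, hence superreflexivity --- at the price of an ultrafilter and of checking that $\lim_{\mathcal U}$ commutes with the continuous functions appearing in Clarkson's second inequality when $1<p<2$; the paper's route is more elementary and stays with the original norm. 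For separability the two proofs coincide in substance: the paper delegates to Proposition 4.1 of \cite{alvarado2023simple} the exact argument you write out (weak compactness of the unit ball, weak metrizability of bounded sets of $L^p$ via separability of $L^{p'}$, the compact-to-Hausdorff homeomorphism, and Mazur). Two small points you should make explicit: completeness of $(KS^{\alpha_p,p}(X),\|\cdot\|_{\mathcal U})$, needed for Milman--Pettis, follows because the norm is equivalent to $\|\cdot\|_{\mathcal{B}^{\alpha_p,p}(X)}$ under $\mathcal{P}(p,\alpha_p)$ and the latter is complete; and the boundedness of the sequence $E_{p,\alpha_p}(f,r_n)$, needed for the ultrafilter limit to be finite, also uses $\mathcal{P}(p,\alpha_p)$ (or Lemma \ref{local-global}).
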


\begin{proof}
Let $\varepsilon>0$ and suppose that $f,g\in KS^{\alpha_p,p}(X)$ satisfy $\| f \|_{KS^{\alpha_p,p}(X)} <1$, $\| g \|_{KS^{\alpha_p,p}(X)}<1$ and $\| f-g \|_{KS^{\alpha_p,p}(X)}>\varepsilon$. Since $\| f \|_{KS^{\alpha_p,p}(X)} <1$ and  $\| g \|_{KS^{\alpha_p,p}(X)}<1$, we first deduce that there exists $r_0>0$ such that for $0<r<r_0$,
\[
\|f\|_{L^p(X,\mu)}^p + E_{p,\alpha_p}(f,r) <1
\]
and 
\[
\|g\|_{L^p(X,\mu)}^p + E_{p,\alpha_p}(g,r) <1.
\]
Then, we have from the property $\mathcal{P}(p,\alpha_p)$
\begin{align*}
\varepsilon^p  <\| f-g \|^p_{KS^{\alpha_p,p}(X)} & = \|f-g\|_{L^p(X,\mu)}^p +\limsup_{r \to 0} E_{p,\alpha_p}(f-g,r)   \\
  & \le C\left( \|f-g\|_{L^p(X,\mu)}^p +\liminf_{r \to 0} E_{p,\alpha_p}(f-g,r)  \right).
\end{align*}
Therefore, there exists $r_1>0$ such that for $0<r<r_1$, 
\[
\|f-g\|_{L^p(X,\mu)}^p + E_{p,\alpha_p}(f-g,r) > \frac{\varepsilon^p}{C}.
\]
We now first assume $p \ge 2$. The Clarkson inequalities for $L^p$ functions  yield the following:
\begin{align*}
& \norm{f+g}_{L^p(X,\mu)}^p + \norm{f-g}_{L^p(X,\mu)}^p + E_{p,\alpha_p}(f+g,r) +E_{p,\alpha_p}(f-g,r)  \\
\le &2^{p-1} \left( \|f\|_{L^p(X,\mu)}^p +\|g\|_{L^p(X,\mu)}^p+ E_{p,\alpha_p}(f,r) +E_{p,\alpha_p}(g,r)\right).
\end{align*}
We therefore have for $0<r < r_0 \wedge r_1$,
\[
\norm{f+g}_{L^p(X,\mu)}^p+E_{p,\alpha_p}(f+g,r) \le 2^{p}-\frac{\varepsilon^p}{C}.
\]
This implies that 
\[
\| f+g \|^p_{KS^{\alpha_p,p}(X)} \le 2^{p}-\frac{\varepsilon^p}{C}.
\]
We then conclude from Lemma \ref{uniform} that $(KS^{\alpha_p,p}(X), \| \cdot \|_{KS^{\alpha_p,p}(X)})$ is reflexive. We now turn to the case $1<p <2$. Let $q$ be the conjugate exponent of $p$, i.e. $q=\frac{p}{p-1}$.
 We have from the reverse Minkowski inequality and Clarkson's inequalities for $L^p$ functions 
 \begin{align*}
 &\left[ \left(\norm{\frac{f+g}{2}}^p_{L^p(X,\mu)} +E_{p,\alpha_p}\left(\frac{f+g}{2},r\right)\right)^{q/p}+\left(\norm{\frac{f-g}{2}}^p_{L^p(X,\mu)} +E_{p,\alpha_p}\left(\frac{f-g}{2},r\right)\right)^{q/p}\right]^{p/q}\\
 \le & \left(\norm{\frac{f+g}{2}}^q_{L^p(X,\mu)} +\norm{\frac{f-g}{2}}^q_{L^p(X,\mu)}\right)^{p/q} + \left(E_{p,\alpha_p}\left(\frac{f+g}{2},r\right)^{q/p}+E_{p,\alpha_p}\left(\frac{f-g}{2},r\right)^{q/p}  \right)^{p/q} \\
 \le &\frac{1}{2} \|f\|_{L^p(X,\mu)}^p +\frac{1}{2}\|g\|_{L^p(X,\mu)}^p+\frac{1}{2}E_{p,\alpha_p}(f,r) +\frac{1}{2}E_{p,\alpha_p}(g,r).
 \end{align*}  
We therefore have for $0<r < r_0 \wedge r_1$,
\[
\norm{f+g}_{L^p(X,\mu)}^p+E_{p,\alpha_p}(f+g,r) \le 2^{p}\left(1-\frac{\varepsilon^q}{2^qC^{q/p}}\right)^{p/q}.
\]
This implies that 
\[
\| f+g \|^p_{KS^{\alpha_p,p}(X)} \le 2^{p}\left(1-\frac{\varepsilon^q}{2^qC^{q/p}}\right)^{p/q},
\]
and we conclude as above. It remains to prove separability. The identity map $\iota: (KS^{\alpha_p,p}(X), \| \cdot \|_{KS^{\alpha_p,p}(X)}) \to (L^{p}(X,\mu),  \| \cdot \|_{L^{p}(X,\mu)})$ is a linear and bounded injective map. Since the space $(KS^{\alpha_p,p}(X), \| \cdot \|_{KS^{\alpha_p,p}(X)}) $ is reflexive and $L^{p}(X,\mu)$ is separable because $X$ is, it now follows from Proposition 4.1 in \cite{alvarado2023simple} that  $(KS^{\alpha_p,p}(X), \| \cdot \|_{KS^{\alpha_p,p}(X)}) $ is separable.
\end{proof}

\subsection{Sobolev type embeddings / Gagliardo-Nirenberg inequalities}\label{Section sobo}

Let $p \ge 1$, $\alpha >0$. Throughout the section we assume:
\begin{itemize}
\item A volume non-collapsing condition: There exists $R>0$ such that
\[
\inf_{x \in X} \mu(B(x,R)) >0.
\] 
\item The property $\mathcal{P}(p,\alpha)$ holds (see Definition \ref{property Pp}).
\end{itemize}

Note that the non-collapsing condition always holds if $X$ has a finite diameter since we can take $R=\mathrm{diam} X$. Also note that  the collapsing condition implies from \eqref{eq:mass-bounds} that for every $x \in X$, $0 < r \le R$,
\[
\mu(B(x,r)) \ge c r^Q,
\]
with $c>0$ (depending on $R$). For $f \in L^q(X,\mu)$, $q \ge 1$ and $r>0$  we consider  the averaging operator
\[
\mathcal{M}_r f (x) =\frac{1}{\mu(B(x,r))} \int_{B(x,r)} f(y) d\mu(y).
\]

\begin{lem}
There exists a constant $C>0$ such that for every $r \in (0,R]$, $q \ge 1$ and $f \in L^{q}(X,\mu)$,
\[
\| \mathcal{M}_r f \|_{L^\infty (X,\mu)} \le \frac{C}{r^{Q/q}} \, \|  f \|_{L^q (X,\mu)}.
\]
\end{lem}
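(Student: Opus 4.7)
The plan is to bound $|\mathcal{M}_r f(x)|$ pointwise via H\"older's inequality and then exploit the uniform lower bound $\mu(B(x,r)) \ge c r^Q$ that the excerpt has already derived from the non-collapsing condition combined with the doubling property \eqref{eq:mass-bounds}.

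First I would fix $x \in X$ and $r \in (0,R]$, and apply H\"older's inequality with conjugate exponents $q$ and $q/(q-1)$ to the integral defining $\mathcal{M}_r f(x)$. This gives
\[
|\mathcal{M}_r f(x)| \le \frac{1}{\mu(B(x,r))} \int_{B(x,r)} |f(y)| \, d\mu(y) \le \frac{\mu(B(x,r))^{1-1/q}}{\mu(B(x,r))} \| f \|_{L^q(X,\mu)} = \mu(B(x,r))^{-1/q} \| f \|_{L^q(X,\mu)}.
\]
(The case $q=1$ is even easier: one just uses $\int_{B(x,r)} |f| \, d\mu \le \|f\|_{L^1}$.)

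Next, I invoke the lower volume bound recorded right before the statement: under the non-collapsing condition, \eqref{eq:mass-bounds} gives a constant $c>0$ such that $\mu(B(x,r)) \ge c r^Q$ for every $x \in X$ and every $0 < r \le R$. Substituting into the previous inequality yields
\[
|\mathcal{M}_r f(x)| \le (c r^Q)^{-1/q} \| f \|_{L^q(X,\mu)} = c^{-1/q} \, r^{-Q/q} \, \| f \|_{L^q(X,\mu)}.
\]
Taking the essential supremum over $x \in X$ and noting that $c^{-1/q} \le \max(1, c^{-1})$ is uniformly bounded in $q \ge 1$, we obtain the claimed estimate with a constant $C$ depending only on the doubling constant and on $R$ (through the non-collapsing constant), but not on $q$, $r$, or $f$.

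I do not foresee a genuine obstacle here; the only mild subtlety is ensuring that the constant $C$ can be chosen independent of $q \ge 1$, which follows because the factor $c^{-1/q}$ is uniformly controlled on $[1,\infty)$ (for instance by $\max(1, c^{-1})$).
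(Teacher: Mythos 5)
Your proof is correct and is exactly the argument the paper has in mind: its own proof consists of the single sentence that the estimate follows from H\"older's inequality and the non-collapsing condition, which is precisely what you carry out (including the useful observation that the constant can be taken uniform in $q \ge 1$).
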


\begin{proof}
The estimate follows from H\"older's inequality and the non-collapsing condition.
\end{proof}

\begin{lem}
There exists a constant $C>0$ such that for every $r >0$ and $f \in KS^{\alpha,p}(X)$,
\[
\| f -\mathcal{M}_r f \|_{L^p(X,\mu)} \le C r^\alpha \, \mathrm{Var}_p (f).
\]
\end{lem}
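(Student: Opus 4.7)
The plan is to get the inequality by first relating $\|f-\mathcal{M}_r f\|_{L^p}^p$ directly to the Besov functional $E_{p,\alpha}(f,r)$ via Jensen's inequality, and then upgrade the resulting supremum-in-$r$ bound into a $\mathrm{Var}_p$ bound using the assumption $\mathcal{P}(p,\alpha)$.

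More precisely, first I would write
\[
f(x) - \mathcal{M}_r f(x) = \frac{1}{\mu(B(x,r))} \int_{B(x,r)} (f(x)-f(y))\, d\mu(y),
\]
and apply Jensen's inequality with the probability measure $\mathbf{1}_{B(x,r)}\, d\mu/\mu(B(x,r))$ to obtain
\[
|f(x)-\mathcal{M}_r f(x)|^p \le \frac{1}{\mu(B(x,r))}\int_{B(x,r)}|f(x)-f(y)|^p\, d\mu(y).
\]
Integrating over $x \in X$ yields exactly
\[
\|f-\mathcal{M}_r f\|_{L^p(X,\mu)}^p \le r^{p\alpha}\, E_{p,\alpha}(f,r).
\]

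Second, I would invoke the hypothesis $\mathcal{P}(p,\alpha)$ of this section, which gives $\sup_{\rho>0} E_{p,\alpha}(f,\rho) \le C \liminf_{\rho\to 0} E_{p,\alpha}(f,\rho) = C\, \mathrm{Var}_p(f)^p$. Applied at the particular scale $r$, this produces
\[
\|f-\mathcal{M}_r f\|_{L^p(X,\mu)}^p \le r^{p\alpha}\, C\, \mathrm{Var}_p(f)^p,
\]
and taking $p$-th roots gives the claim with constant $C^{1/p}$.

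There is no real obstacle here: the bound is essentially a direct consequence of Jensen's inequality plus $\mathcal{P}(p,\alpha)$. The only subtlety worth flagging is that the estimate must hold uniformly in \emph{all} $r>0$ (including large $r$), which is exactly what the \emph{sup-over-all-scales} half of $\mathcal{P}(p,\alpha)$ provides; the weaker $\limsup_{r\to 0}$ bound implicit in the definition of $\mathrm{Var}_p$ alone would only give the estimate for small $r$.
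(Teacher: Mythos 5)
Your proof is correct and is essentially the paper's argument: the paper likewise applies H\"older's (equivalently Jensen's) inequality to the normalized average to get $\|f-\mathcal{M}_r f\|_{L^p}^p \le r^{p\alpha} E_{p,\alpha}(f,r) \le r^{p\alpha}\sup_{\rho>0}E_{p,\alpha}(f,\rho)$, and then invokes $\mathcal{P}(p,\alpha)$ to pass to $C\,\mathrm{Var}_p(f)^p$. Your closing remark about needing the sup-over-all-scales half of $\mathcal{P}(p,\alpha)$ is a fair and accurate observation.
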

\begin{proof}
This follows from the property $\mathcal{P}(p,\alpha)$. Indeed, from H\"older's inequality
\[
\| f -\mathcal{M}_r f \|_{L^p(X,\mu)} \le  r^\alpha \, \sup_{\rho>0} E_{p,\alpha_p} (f,\rho)^{1/p} \le C r^\alpha \, \mathrm{Var}_p (f).
\]
\end{proof}
Remarkably, together with Theorem \ref{L:local_norm_approx:a}, the two simple previous lemmas are enough to obtain the full scale of Gagliardo-Nirenberg inequalities. The results follow from applying the results of \cite[Theorem 9.1]{MR1386760}, see also \cite{MR4196573}. 

\begin{theorem}\label{sobo}
Let $q=\frac{pQ}{Q-\alpha p}$ with the convention that $q=\infty$ if $Q=\alpha p$. Let $ r,s \in (0,+\infty]$ and $\theta \in (0,1]$ satisfying 
\[
\frac{1}{r}=\frac{\theta}{q}+\frac{1-\theta}{s}.
\]
If $Q=\alpha p$ with $p>1$, we assume $r<+\infty$. Then, there  exists a constant $C>0$ such that for every $f \in KS^{\alpha,p}(X)$,
\begin{equation}\label{E:Gagliardo-Nirenberg:corollary1_loc}
  \| f \|_{L^r(X,\mu)} \le C \left(\|f\|_{L^p(X,\mu)}+ \mathrm{Var}_p (f)\right)^{\theta}\|f\|_{L^s(X,\mu)}^{1-\theta}.
\end{equation}
\end{theorem}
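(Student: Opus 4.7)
The plan is to verify that we are in the abstract framework of \cite[Theorem 9.1]{MR1386760}, with $\mathrm{Var}_p$ playing the role of the Sobolev quasi-seminorm and $(\mathcal{M}_r)_{r>0}$ playing the role of the approximating family. The two lemmas immediately preceding the theorem supply exactly the two required analytic ingredients: an $L^q \to L^\infty$ smoothing estimate of rate $r^{-Q/q}$ for $\mathcal{M}_r$, and a pseudo-Poincaré estimate $\|f - \mathcal{M}_r f\|_{L^p} \le C r^\alpha \mathrm{Var}_p(f)$. Theorem \ref{L:local_norm_approx:a} then supplies the structural properties of $\mathrm{Var}_p$ (quasi-subadditivity, stability under truncation, and the crucial $\ell^p$-estimate over dyadic level sets) which is what turns the two analytic estimates into a genuine Sobolev inequality.

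The first step is to obtain a weak-type endpoint bound. Given $t>0$, split $f = (f\wedge t) + (f-t)^+$ and apply the $L^\infty$ bound of $\mathcal{M}_r$ to $(f\wedge t)$ and the pseudo-Poincaré bound to the remainder. Optimizing the scale $r$ so that the two contributions balance yields a weak-type inequality
\[
\mu(\{|f| > t\}) \le C\, t^{-q_*}\bigl(\|f\|_{L^p(X,\mu)} + \mathrm{Var}_p(f)\bigr)^{q_*}, \qquad q_* = \frac{pQ}{Q - \alpha p},
\]
valid for $\alpha p < Q$. This is essentially the argument that a heat-kernel upper bound plus a pseudo-Poincaré inequality implies the weak Sobolev embedding at the critical exponent.

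The second step upgrades weak type to strong type. Apply the weak estimate to each truncated level function $f_{\rho,k} = (f - \rho^k)^+ \wedge \rho^k(\rho-1)$ with $\rho = 2$. Using that $\mathrm{Var}_p(f_{\rho,k}) \le \mathrm{Var}_p(f)$ and the $\ell^p$ summation bound
\[
\Bigl(\sum_{k\in\mathbb Z} \mathrm{Var}_p(f_{\rho,k})^p\Bigr)^{1/p} \le C\, \mathrm{Var}_p(f)
\]
from Theorem \ref{L:local_norm_approx:a}, one can sum the dyadic level-set contributions to obtain the strong-type embedding
\[
\|f\|_{L^{q_*}(X,\mu)} \le C\bigl(\|f\|_{L^p(X,\mu)} + \mathrm{Var}_p(f)\bigr).
\]
The endpoint case $\alpha p = Q$ is handled in the same framework, with the weak $L^\infty$ bound replacing the weak $L^{q_*}$ bound. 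The full Gagliardo-Nirenberg scale \eqref{E:Gagliardo-Nirenberg:corollary1_loc} then follows by standard H\"older interpolation between the endpoint $L^{q_*}$ and the exponent $s$ (and between $L^p$ and $L^{q_*}$ if $r \in [p,q_*]$), with the relation $\tfrac{1}{r} = \tfrac{\theta}{q_*} + \tfrac{1-\theta}{s}$.

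The main obstacle is the second step: the weak-to-strong upgrade is the only place where the finer structure of $\mathrm{Var}_p$ is used, and it is here that the assumption $\mathcal{P}(p,\alpha)$ enters in an essential way. Indeed, without the control $\sup_r E_{p,\alpha}(\cdot,r) \lesssim \liminf_{r\to 0} E_{p,\alpha}(\cdot,r)$, the truncated pieces $f_{\rho,k}$ might fail to lie in $KS^{\alpha,p}(X)$ with uniformly controlled seminorm, and the $\ell^p$-summation over $k$ would collapse. The remaining technical point is the non-collapsing hypothesis, which is precisely what guarantees the lower volume bound $\mu(B(x,r)) \ge c r^Q$ for small $r$ needed in the $L^q \to L^\infty$ smoothing estimate; both restrictions ($\alpha p < Q$ and non-collapsing) are built into the BCLS template.
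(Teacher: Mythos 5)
Your proposal matches the paper's proof: the paper likewise obtains Theorem \ref{sobo} by feeding the two preceding lemmas (the $L^q\to L^\infty$ smoothing bound for $\mathcal{M}_r$ and the pseudo-Poincar\'e estimate $\|f-\mathcal{M}_r f\|_{L^p}\le Cr^\alpha \mathrm{Var}_p(f)$) together with the quasi-seminorm, truncation, and $\ell^p$-summation properties of $\mathrm{Var}_p$ from Theorem \ref{L:local_norm_approx:a} into the template of \cite[Theorem 9.1]{MR1386760}. Your additional unpacking of the weak-type estimate and the weak-to-strong upgrade via dyadic truncations is exactly the internal mechanism of that cited result, so there is no substantive difference in route.
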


We explicitly point out  some particular cases of interest.
\begin{enumerate}
\item Assume that  $p \alpha  <Q$. If $r=s$, then $r=\frac{pQ}{Q-p \alpha}$ and~\eqref{E:Gagliardo-Nirenberg:corollary1_loc} yields the  Sobolev inequality
\[
\| f \|_{L^r(X,\mu)} \le C \left( \|f\|_{L^p(X,\mu)}+ \mathrm{Var}_p (f)\right).
\]
\item Assume that  $p \alpha  <Q$. If $s=+\infty$ and $r \ge \frac{pQ}{Q-p\alpha }$, then~\eqref{E:Gagliardo-Nirenberg:corollary1_loc} yields 
\[
\| f \|_{L^r(X,\mu)} \le C \left(\|f\|_{L^p(X,\mu)}+ \mathrm{Var}_p (f)\right)^{\theta} \| f \|^{1-\theta}_{L^\infty(X,\mu)}
\]
with $\theta=\frac{pQ}{r(Q-p \alpha)}$.
\item If $r=p>1$ and $s=1$, then~\eqref{E:Gagliardo-Nirenberg:corollary1_loc} yields the  Nash inequality
\[
\| f \|_{L^p(X,\mu)} \le C \left(\|f\|_{L^p(X,\mu)}+ \mathrm{Var}_p (f)\right)^{\theta} \| f \|^{1-\theta}_{L^1(X,\mu)}
\]
with $\theta=\frac{(p-1)Q}{p(\alpha+Q)-Q}$.
\item Assume  either  $p \alpha  > Q$ or $p \alpha  = Q$ with $p=1$. Then, for   $s \ge 1$,
\begin{equation*}
\| f \|_{L^\infty(X,\mu)} \le C \left(\|f\|_{L^p(X,\mu)}+\mathrm{Var}_p (f)\right)^{\theta} \| f \|^{1-\theta}_{L^s(X,\mu)},
\end{equation*}
where $\theta =\frac{pQ}{pQ+s(p\alpha-Q)}$. In particular, if $s=1$, and if $f$ is supported in a set $\Omega$ of finite measure we have  $\| f \|_{L^s(X,\mu)}\le \| f \|_{L^\infty(X,\mu)} \mu \left(\Omega \right)$ and we get:
\[
\| f \|_{L^\infty(X,\mu)} \le  C \left(\|f\|_{L^p(X,\mu)}+\mathrm{Var}_p (f)\right) \mu(\Omega)^{\frac{\alpha}{Q}-\frac{1}{p}}.
\]

\end{enumerate}

From \cite[Corollaries 6.3 \& 6.4]{MR1386760},  in the case $p \alpha =Q$ with $p>1$  one also obtains Trudinger-Moser type inequalities. 

\begin{corollary}\label{C:Trudinger-Moser_1_loc}

Assume that $p\alpha =Q$ and that $p>1$. Let $k \ge p-1$ be an integer. Then, there exist constants $c,C>0$ such that for every $f \in KS^{\alpha,p}(X)$ with $\|f\|_{L^p(X,\mu)}+ \mathrm{Var}_p (f) = 1$,
\begin{align*}
\int_X \exp_k \left( c |f|^{\frac{p}{p-1}} \right) d\mu  \le C \| f \|^p_{L^p(X,\mu)},
\end{align*}
where $\exp_k (x)=\sum_{\ell=k}^{+\infty} \frac{x^\ell}{\ell!}$. Moreover, if $f \in KS^{\alpha,p}(X)$ with $\|f\|_{L^p(X,\mu)}+ \mathrm{Var}_p (f) = 1$ is supported in a set $\Omega$ of finite measure then
\[
\int_\Omega e^{ c |f|^{\frac{p}{p-1}} } d\mu \le C \mu(\Omega).
\]
\end{corollary}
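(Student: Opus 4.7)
The plan is to reduce the corollary to a direct application of the abstract Moser--Trudinger machinery of Bakry--Coulhon--Ledoux--Saloff-Coste developed in \cite[Theorem 3.4, Corollary 6.4]{MR1386760}. That framework takes as input a Sobolev quasi-seminorm obeying the four axioms of \cite[Section 2]{MR1386760}, together with a scale of Sobolev inequalities in ``dimension'' $Q$ at the critical exponent $p\alpha = Q$, and outputs exactly the exponential integrability stated here. Hence my role is essentially to verify the structural hypotheses; the cited theorems then apply verbatim.

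I would first check that the inputs are all in place. Theorem \ref{L:local_norm_approx:a} establishes that $\mathrm{Var}_p$ is a Sobolev quasi-seminorm in the sense of \cite[Section 2]{MR1386760}: its four listed properties are precisely the quasi-triangle inequality, the vanishing criterion, the truncation property $\mathrm{Var}_p((f-t)^+\wedge s)\le \mathrm{Var}_p(f)$, and the level-set decomposition $\sum_k \mathrm{Var}_p(f_{\rho,k})^p \le C\,\mathrm{Var}_p(f)^p$. In addition, Theorem \ref{sobo} furnishes, in the critical regime $p\alpha=Q$, the scale of Gagliardo--Nirenberg inequalities controlling $\|f\|_{L^r(X,\mu)}$ for every $r\ge p$ in terms of $\|f\|_{L^p(X,\mu)}$ and $\mathrm{Var}_p(f)$, which is the structural input required by the abstract machinery. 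The hypothesis $k\ge p-1$ corresponds to the requirement that, when these Sobolev inequalities are invoked at exponents $r=\ell p/(p-1)$ with $\ell\ge k$, one stays above the base exponent $p$.

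With these ingredients, \cite[Theorem 3.4]{MR1386760} yields the first estimate by expanding $\exp_k(x)=\sum_{\ell\ge k}x^\ell/\ell!$, controlling the $\ell$-th term by the corresponding Sobolev inequality at $r=\ell p/(p-1)$, and summing the resulting series via Stirling's formula; the constant $c>0$ must be taken small enough (depending on $k$ and the Sobolev constants) so that the series converges. The second assertion, for $f$ supported in a set $\Omega$ of finite measure, follows in the same way from \cite[Corollary 6.4]{MR1386760}: the lower-order terms $\ell<k$ needed to complete $\exp_k$ to the full exponential $e^{c|f|^{p/(p-1)}}$ are each bounded by a constant multiple of $\mu(\Omega)$, using Jensen's inequality to get $\|f\|_{L^\ell(\Omega,\mu)}^\ell \le \|f\|_{L^p(\Omega,\mu)}^\ell\,\mu(\Omega)^{1-\ell/p}$ for $1\le\ell<p$. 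The main point on which everything hinges, and the real technical content of the cited theorems, is the careful tracking of the dependence of the Sobolev constants on the exponent $r$: any worse growth rate in $r$ would prevent summability and invalidate the exponential bound. This bookkeeping has been carried out once and for all in \cite{MR1386760}, so the proof reduces to the verification of the structural inputs described above.
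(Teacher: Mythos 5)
Your proposal is correct and follows essentially the same route as the paper, which likewise derives the corollary by verifying that $\mathrm{Var}_p$ is a Sobolev quasi-seminorm (Theorem \ref{L:local_norm_approx:a}) together with the critical-exponent Sobolev scale, and then invoking \cite[Theorem 3.4, Corollary 6.4]{MR1386760} directly. Your additional remarks on summing the series term by term and on the role of the condition $k\ge p-1$ accurately describe the internal mechanism of the cited results.
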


\begin{remark}
In all of those inequalities, it is  possible to track the dependence of the constants on $R,Q,\inf_{x \in X} \mu(B(x,R))$ and the constant in the property $\mathcal{P}(p,\alpha)$, see the arguments in \cite{MR4196573}.
\end{remark}
\begin{remark}
 If $X$ has maximal volume growth, i.e. $\mu( B(x,R)) \ge c R^Q$ for every $R>0$ and $x\in X$ for some $c>0$ then we can let $R \to +\infty$ in the arguments yielding the Gagliardo-Nirenberg inequalities and get everywhere $\mathrm{Var}_p (f)$ instead of $\|f\|_{L^p(X,\mu)}+\mathrm{Var}_p (f)$, see the arguments in \cite{MR4196573} which follow again from \cite[Theorem 9.1]{MR1386760}.
\end{remark}

\section{Korevaar-Schoen-Sobolev spaces and Poincar\'e inequalities}

\subsection{Poincar\'e inequalities and $\mathcal{P}(p,1)$}

As before, $(X,d,\mu)$ is a metric measure space where $\mu$ is a positive and doubling Borel regular measure.
In this section, under the assumption of a $p$-Poincar\'e inequality, we prove the property $\mathcal{P}(p,\alpha)$ with $\alpha=1$. Let $p \ge 1$. Consider the following $p$-Poincar\'e inequality for  locally Lipschitz functions
\begin{align}\label{p-Poincare}
\int_{B(x,r)} | f(y) -f_{B(x,r)}|^p d\mu (y) \le C r^p \int_{B(x,\lambda r)} (\mathrm{Lip} f )(y)^p d\mu (y)
\end{align}
where we denote
\[
(\mathrm{Lip} f )(y)=\limsup_{r \to 0 } \sup_{x \in X, d(x,y) \le r} \frac{|f(x)-f(y)|}{r}
\]
and
\[
f_{B(x,r)}=\frac{1}{\mu(B(x,r))} \int_{B(x,r)}  f(y) d\mu(y).
\]
In the inequality, the constants $C>0$ and $\lambda \ge 1$ are independent from $x$, $r$ and $f$.

\begin{remark}
Poincar\'e inequalities and their applications in the study of metric spaces have  extensively been studied in the literature and are nowadays standard assumptions, see for instance \cite{HKST15} and \cite{MR1800917} for detailed accounts. For concrete examples, it is known for instance that if a metric measure space satisfies a measure contraction property $\mathrm{MCP}(0,N)$ for some $N \ge 1$, then the $p$-Poincar\'e inequality holds for every $p \ge 1$, see \cite{MR1617040} and  \cite{MR2237206,MR2237207}. As a consequence, complete Riemannian manifolds with non-negative Ricci curvature and many sub-Riemannian manifolds support a $p$-Poincar\'e inequality.
\end{remark}

\begin{remark}
 In view of the Haj\l{}asz-Koskela Sobolev embedding \cite[Theorem 5.1]{MR1683160} (see also Theorem 9.1.2 in \cite{HKST15}), for $p>1$, one can replace the assumption of a $p$-Poincar\'e inequality \eqref{p-Poincare} by the assumption of a $(1,p)$ Poincar\'e inequality:
 \begin{align}\label{weak Poincare}
\fint_{B(x,r)} | f(y) -f_{B(x,r)}| d\mu (y) \le C r \left( \fint_{B(x,\lambda r)} (\mathrm{Lip} f )(y)^p d\mu (y)\right)^{1/p}.
\end{align}
\end{remark}

\begin{remark}\label{R:pPI_Lip_vs_ug}
If $(X,d)$ is complete, the $p$-Poincar\'e inequality~\eqref{p-Poincare} is known to be equivalent to the $p$-Poincar\'e inequality with upper gradients, c.f.~\cite[Theorem 8.4.2]{HKST15}.
\end{remark}

The  main result in that setting is the following theorem. It follows from a combination of results in \cite{MR2415381} and \cite{MR1628655} (see also \cite{GKS}).  We define $L^p_{loc}(X,\mu)$ to be the space of locally $p$-integrable functions and for $f_n,f \in L^p_{loc}(X,\mu)$ we say that $f_n \to f$ in $L^p_{loc}(X,\mu)$ if for every ball $B \subset X$ one has $\int_B | f_n-f|^p d\mu \to 0$ when $n \to +\infty$.

\begin{theorem}\label{equi poinc}
The $p$-Poincar\'e inequality \eqref{p-Poincare} implies $\mathcal{P}(p,1)$. Moreover, on $KS^{1,p}(X)$
\[
 \mathrm{Var}_p (f)^p \simeq \inf_{f_n} \liminf_{n \to +\infty} \int_{X} (\mathrm{Lip} f_n )(y)^p d\mu (y)
\]
where the infimum is taken over the sequences of locally Lipschitz functions $f_n$ such that $f_n \to f$ in $L^p_{loc}(X,\mu)$. 
\end{theorem}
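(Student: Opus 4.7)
The plan is to establish both $\mathcal{P}(p,1)$ and the infimum characterization of $\mathrm{Var}_p$ simultaneously, by sandwiching $E_{p,1}(f,r)$ between the approximation quantity $\int_X (\mathrm{Lip} f_n)^p d\mu$ on both sides. The $p$-Poincar\'e inequality is used to dominate $E_{p,1}(g, r)$ uniformly in $r$ by $\int (\mathrm{Lip} g)^p d\mu$ for locally Lipschitz $g$, while the Lipschitz partition of unity of Proposition~\ref{partition of unity} is used to manufacture, at each scale $\varepsilon>0$, a locally Lipschitz approximant $f_\varepsilon$ whose total Lipschitz energy is controlled by $E_{p,1}(f, K\varepsilon)$.

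\textbf{Lipschitz functions control $E_{p,1}$.} For locally Lipschitz $g$ and $r>0$, I would split $|g(y)-g(x)|^p \le 2^{p-1}(|g(y)-g_{B(x,r)}|^p + |g(x)-g_{B(x,r)}|^p)$ inside the integrand of $E_{p,1}(g,r)$. The first term is handled directly by the $p$-Poincar\'e inequality \eqref{p-Poincare}; the second is rearranged by Fubini, using doubling to exchange $B(x,r)$ for $B(y,r)$ before applying Poincar\'e again. Integrating the resulting $(\mathrm{Lip} g)^p$ and invoking doubling to bound the overlap factor $\int_X \chi_{B(y,\lambda r)}(z)/\mu(B(z,\lambda r))\, d\mu(z)$ by a constant produces the uniform estimate $\sup_{r>0} E_{p,1}(g,r) \le C \int_X (\mathrm{Lip} g)^p d\mu$. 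Combined with lower semicontinuity of $E_{p,1}(\cdot, r)$ under $L^p_{\loc}$-convergence (apply Fatou to the double integral after extracting a pointwise a.e.\ convergent subsequence), this yields, for any locally Lipschitz $f_n \to f$ in $L^p_{\loc}$,
\[
\sup_{r>0} E_{p,1}(f,r) \le C \liminf_{n \to \infty} \int_X (\mathrm{Lip} f_n)^p d\mu.
\]

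\textbf{Construction of the Lipschitz approximation.} For $\varepsilon>0$, using $(\phi_a^\varepsilon)_{a \in A(\varepsilon)}$ from Proposition~\ref{partition of unity}, I would set
\[
f_\varepsilon(x) = \sum_{a \in A(\varepsilon)} f_{B(a, 2\varepsilon)}\, \phi_a^\varepsilon(x),
\]
which is locally Lipschitz with bounded overlap. Lebesgue differentiation (valid in our doubling setting) gives $f_\varepsilon \to f$ pointwise a.e.\ and, via dominated convergence on balls, in $L^p_{\loc}$. To bound $\mathrm{Lip} f_\varepsilon$, I would use $\sum_a \phi_a^\varepsilon \equiv 1$ to write
\[
f_\varepsilon(x) - f_\varepsilon(x') = \sum_a \bigl(f_{B(a,2\varepsilon)} - f_{B(x,K\varepsilon)}\bigr)\bigl(\phi_a^\varepsilon(x) - \phi_a^\varepsilon(x')\bigr),
\]
then combine $|\phi_a^\varepsilon(x)-\phi_a^\varepsilon(x')| \le (\lambda/\varepsilon)d(x,x')$, the bounded overlap of the cover, and the estimate $|f_{B(a,2\varepsilon)} - f_{B(x,K\varepsilon)}|^p \le C\fint_{B(x,K\varepsilon)} |f - f_{B(x,K\varepsilon)}|^p d\mu$ (the last coming from doubling plus H\"older) to derive the pointwise bound
\[
(\mathrm{Lip} f_\varepsilon)(x)^p \le \frac{C}{\varepsilon^p} \fint_{B(x, K\varepsilon)} |f(y) - f_{B(x,K\varepsilon)}|^p d\mu(y).
\]
Using next $|f(y)-f_B|^p \le \fint_B |f(y)-f(z)|^p d\mu(z)$ and applying Fubini together with doubling to swap roles of the base point converts the spatial integral of this quantity into $C\, E_{p,1}(f, K\varepsilon)$, so
\[
\int_X (\mathrm{Lip} f_\varepsilon)^p d\mu \le C\, E_{p,1}(f, K\varepsilon).
\]
Picking $\varepsilon_n \to 0$ along which $E_{p,1}(f, K\varepsilon_n) \to \liminf_{r \to 0} E_{p,1}(f, r)$ then gives $\inf_{f_n} \liminf_n \int (\mathrm{Lip} f_n)^p d\mu \le C\, \mathrm{Var}_p(f)^p$, closing the chain and yielding both claims.

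\textbf{Main obstacle.} The delicate point is the pointwise Lipschitz estimate on $f_\varepsilon$: one must telescope differences of averages over many overlapping balls of comparable size without losing powers of the overlap constant, and then convert a local $L^p$-oscillation of $f$ on a ball of radius $K\varepsilon$ into the global $E_{p,1}(f, K\varepsilon)$ through Fubini and the comparison $\mu(B(x,r)) \asymp \mu(B(y,r))$ valid when $d(x,y)<r$. The Poincar\'e inequality itself, Lebesgue differentiation, and lower semicontinuity enter in essentially standard ways once this estimate is secured.
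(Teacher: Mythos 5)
Your overall architecture is the same as the paper's: first show that the $p$-Poincar\'e inequality yields $\sup_{r>0}E_{p,1}(g,r)\le C\int_X(\mathrm{Lip}\, g)^p\,d\mu$ for locally Lipschitz $g$ and transfer this to $L^p_{\loc}$-limits by lower semicontinuity, then build the discrete convolution $f_\eps=\sum_a f_{B(a,2\eps)}\phi_a^\eps$ and bound $\int_X(\mathrm{Lip} f_\eps)^p\,d\mu$ by $E_{p,1}(f,K\eps)$. Your second half is essentially identical to the paper's argument and is sound.

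The gap is in the first half. After splitting $|g(y)-g(x)|^p\le 2^{p-1}\bigl(|g(y)-g_{B(x,r)}|^p+|g(x)-g_{B(x,r)}|^p\bigr)$, the first term is indeed a direct application of \eqref{p-Poincare}, but the second term integrates to $\int_X|g(x)-g_{B(x,r)}|^p\,d\mu(x)$, in which no $y$-dependence remains; the Fubini/doubling swap you describe only returns you to a quantity comparable to $\int_X\fint_{B(y,r)}|g(x)-g(y)|^p\,d\mu(x)\,d\mu(y)$, i.e.\ to the quantity you started from, so ``applying Poincar\'e again'' is circular. The Poincar\'e inequality controls the mean oscillation over a ball, not the pointwise deviation of a center from its own ball average, and for $p=1$ (or without completeness) you cannot fall back on the maximal-function/self-improvement route mentioned in the paper's remarks. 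The paper's fix is to introduce a bounded-overlap cover $\{B(x_i,r)\}$ at scale $r$ and to compare \emph{both} $g(x)$ and $g(y)$ to the fixed average $g_{B(x_i,r)}$ for the $i$ with $x\in B(x_i,r)$; every resulting term is then of the form $\int_{B(x_i,2r)}|g-g_{B(x_i,r)}|^p\,d\mu$, which is handled by Poincar\'e on $B(x_i,2r)$ together with an elementary bound on $|g_{B(x_i,2r)}-g_{B(x_i,r)}|$. (Alternatively, one can telescope $|g(x)-g_{B(x,r)}|\le\sum_{k\ge 0}|g_{B(x,2^{-k-1}r)}-g_{B(x,2^{-k}r)}|$, apply Poincar\'e at each scale, and integrate before summing.) A second, minor point: for $p=1$ your dominated-convergence argument for $f_\eps\to f$ in $L^p_{\loc}$ lacks an integrable dominant; the quantitative bound $\|f_\eps-f\|_{L^p(X,\mu)}^p\le C\eps^pE_{p,1}(f,K\eps)$, which your estimates essentially already provide, is what should be used instead.
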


\begin{proof}
The proof is a minor modification of the proof of Theorem 3.1 in \cite{MR3558354}; We however write all details since similar arguments will be used in the next section in a more complicated setting.
Fix $r>0$ and, see Proposition \ref{partition of unity}, consider an  $r$-covering of $X$ that consists of balls $\{B(x_i,r)\}_{i\geq 1}$ with the property that  $\{B(x_i,2\lambda r)\}_{i\geq 1}$ have the bounded overlap property, i.e. there exists $C>0$ (independent from $r$) such that
\[
\sum_{i\geq 1}\mathbf{1}_{B(x_i,2\lambda r)}(x)<C
\]
for all $x\in X$. In addition, for any $x\in B(x_i,r)$ and $y\in B(x,r)$ we note that the doubling property implies
\begin{align*}
&\mu(B(x_i,r))\leq \mu(B(y,4 r))\leq C \mu(B(y,r)),\\
&\mu(B(x_i,r))\leq\mu(B(x,2r))\leq C \mu(B(x,r)).    
\end{align*}
Now, let $f$ be a locally Lipschitz function on $X$ which is in $L_{loc}^p(X,\mu)$. We have
\begin{align*}
  & \frac{1}{r^p}\int_X\int_{B(x,r)}\frac{|f(x)-f(y)|^p}{\mu(B(x,r))}d\mu(y)\,d\mu(x)  \\
  \leq & \frac{1}{r^p}\sum_{i\geq 1}\int_{B(x_i,r)}\int_{B(x,r)}\frac{|f(x)-f(y)|^p}{\mu(B(x,r))}d\mu(y)\,d\mu(x)\\
 \leq & \frac{2^{p-1}}{r^p}\sum_{i\geq 1}\int_{B(x_i,r)}\int_{B(x,r)}\frac{|f(x)-f_{B(x_i,r)}|^p}{\mu(B(x,r))} + \frac{|f(y)-f_{B(x_i,r)}|^p}{\mu(B(x,r))}d\mu(y)\,d\mu(x).
\end{align*}
We control the first term with the $p$-Poincar\'e inequality as follows.
\begin{align*}
 &\sum_{i\geq 1}\int_{B(x_i,r)}\int_{B(x,r)}\frac{|f(x)-f_{B(x_i,r)}|^p}{\mu(B(x,r))} d\mu(y) \,d\mu(x) \\
 =&\sum_{i\geq 1}\int_{B(x_i,r)} |f(x)-f_{B(x_i,r)}|^p d\mu(x) \\
 \le &C r^p \sum_{i\geq 1} \int_{B(x_i,\lambda r)} (\mathrm{Lip} f )(y)^p d\mu (y) \le C r^p \int_{X} (\mathrm{Lip} f )(y)^p d\mu (y).
\end{align*}
The second term can be controlled in a similar way. First, by using Fubini's theorem and the volume doubling property one obtains
\begin{align*}
  \sum_{i\geq 1}\int_{B(x_i,r)}\int_{B(x,r)} \frac{|f(y)-f_{B(x_i,r)}|^p}{\mu(B(x,r))}d\mu(y)\,d\mu(x) 
 \le & \sum_{i\geq 1}\int_{B(x_i,2r)}\int_{B(y,r)} \frac{|f(y)-f_{B(x_i,r)}|^p}{\mu(B(x,r))}d\mu(x)\,d\mu(y) \\
 & \le C \sum_{i\geq 1}\int_{B(x_i,2r)} |f(y)-f_{B(x_i,r)}|^p d\mu(y). 
\end{align*}
Then, one has
\begin{align*}
 & \int_{B(x_i,2r)} |f(y)-f_{B(x_i,r)}|^p d\mu(y) \\
 \le & 2^{p-1} \left(  \int_{B(x_i,2r)} |f(y)-f_{B(x_i,2r)}|^p d\mu(y)+ \mu( B(x_i,2r))  |f_{B(x_i,2r)}-f_{B(x_i,r)}|^p \right) \\
 \le & C  \left(  r^p \int_{B(x_i,2\lambda r)} \mathrm{Lip}(f) (y)^p d\mu(y)+ \mu( B(x_i,2r))  |f_{B(x_i,2r)}-f_{B(x_i,r)}|^p \right).
\end{align*}
Finally, from H\"older's inequality and the $p$-Poincar\'e inequality again we get
\begin{align*}
  \mu( B(x_i,2r))  |f_{B(x_i,2r)}-f_{B(x_i,r)}|^p &  \le  C \int_{B(x_i,r)} | f(y) -f_{B(x_i,2r)}|^p d\mu(y) \\
  & \le  C \int_{B(x_i,2r)} | f(y) -f_{B(x_i,2r)}|^p d\mu(y) \\
&  \le C r^p  \int_{B(x_i,2\lambda r)} (\mathrm{Lip} f )(y)^p d\mu (y).
\end{align*}
Combining everything together we obtain that for every $r>0$
\begin{align}\label{sup kor so}
\frac{1}{r^p}\int_X\int_{B(x,r)}\frac{|f(x)-f(y)|^p}{\mu(B(x,r))}d\mu(y)\,d\mu(x) \le C \int_{X} (\mathrm{Lip} f )(y)^p d\mu (y).
\end{align}
We therefore proved that any locally Lipschitz function which is in $L^p(X,\mu)$ and such that $\mathrm{Lip} f \in L^p(X,\mu)$  is in the Besov-Lipschitz space $\mathcal{B}^{1,p}(X)$. In particular, $\mathcal{B}^{1,p}(X)$ contains non-constant functions. The estimate \eqref{sup kor so} also shows that for every $f  \in L^p(X,\mu)$ and every ball $B$
\[
\frac{1}{r^p}\int_B \int_{B(x,r)}\frac{|f(x)-f(y)|^p}{\mu(B(x,r))}d\mu(y)\,d\mu(x) \le C \inf_{f_n} \liminf_{n \to +\infty} \int_{X} (\mathrm{Lip} f_n )(y)^p d\mu (y)
\]
where the infimum is taken over the sequences of locally Lipschitz functions $f_n$ such that $f_n \to f$ in $L^p_{loc}(X,\mu)$. Indeed we have
\[
\lim_{n \to +\infty} \int_B \int_{B(x,r)}\frac{|f_n(x)-f_n(y)|^p}{\mu(B(x,r))}d\mu(y)\,d\mu(x)=\int_B \int_{B(x,r)}\frac{|f(x)-f(y)|^p}{\mu(B(x,r))}d\mu(y)\,d\mu(x).
\]
 This proves that for every $f  \in L^p(X,\mu)$
\[
\frac{1}{r^p}\int_X \int_{B(x,r)}\frac{|f(x)-f(y)|^p}{\mu(B(x,r))}d\mu(y)\,d\mu(x) \le C \inf_{f_n} \liminf_{n \to +\infty} \int_{X} (\mathrm{Lip} f_n )(y)^p d\mu (y).
\]
We now turn to the second part of the proof where we establish that
\[
\inf_{f_n} \liminf_{n \to +\infty} \int_{X} (\mathrm{Lip} f_n )(y)^p d\mu (y) \le C\liminf_{r \to 0} \frac{1}{r^p}\int_X\int_{B(x,r)}\frac{|f(x)-f(y)|^p}{\mu(B(x,r))}d\mu(y)\,d\mu(x). 
\]

Fix $\eps>0$. Let $\{B_i^\eps=B(x_i,\eps)\}_i$ be an $\eps$-covering of $X$, so that   the family $\{B_i^{5\eps}\}_i$ has the bounded overlap property uniformly in $\eps$. Let $\pip_i^\eps$ be a $(C/\eps)$-Lipschitz partition of unity subordinated to this cover, see Proposition \ref{partition of unity}: that is, 
$0\le \pip_i^\eps\le 1$ on $X$, $\sum_i\pip_i^\eps=1$ on $X$, and $\pip_i^\eps=0$ in $X\setminus B_i^{2\eps}$. For $f \in KS^{1,p}(X)$, we  set 
\[
f_\eps:=\sum_i f_{B_i^\eps}\, \pip_i^\eps,
\]
where $f_{B_i^\eps}=\vint_{B_i^\eps} fd\mu$. Then $f_\eps$ is locally Lipschitz. Indeed, for $x,y\in B_j^\eps$ we see that
\begin{align*}
|f_\eps(x)-f_\eps(y)|&=\left| \sum_{i:2B_i^\eps\cap 2B_j^\eps\ne\emptyset} (f_{B_i^\eps}-f_{B_j^\eps})(\pip_i^\eps(x)-\pip_i^\eps(y)) \right|\\
&\le \sum_{i:2B_i^\eps\cap 2B_j^\eps\ne\emptyset}|f_{B_i^\eps}-f_{B_j^\eps}||\pip_i^\eps(x)-\pip_i^\eps(y)|\\
 &\le \frac{C\, d(x,y)}{\eps} \sum_{i:2B_i^\eps\cap 2B_j^\eps\ne\emptyset}
    \left(\vint_{B_i^\eps}\vint_{B(w,6\eps)}|f(u)-f(w)|^p\, d\mu(u)\, d\mu(w)\right)^{1/p}.
\end{align*}
Therefore, we see that on $B_j^\eps$
\begin{align*}
\mathrm{Lip} (f_\eps)&\le \frac{C}{\eps}\sum_{i:2B_i^\eps\cap 2B_j^\eps\ne\emptyset}
    \left(\vint_{B_i^\eps}\vint_{B(x,6\eps)}|f(y)-f(x)|^p\, d\mu(y)\, d\mu(x)\right)^{1/p}\\
    &\le C\left(\vint_{5B_j^\eps} \vint_{B(x,6\eps)}\frac{|f(y)-f(x)|^p}{\eps^p}\, d\mu(y)\, d\mu(x)\right)^{1/p},
\end{align*}
and so by the bounded overlap property of the collection $5B_j^\eps$,
\begin{align*}
\int_X\mathrm{Lip} (f_\eps)^p\, d\mu &\le \sum_j \int_{B_j^\eps}\mathrm{Lip} (f_\eps)^p\, d\mu\\
  &\le C\, \sum_j \int_{5B_j^\eps} \vint_{B(x,6\eps)}\frac{|f(y)-f(x)|^p}{\eps^p}\, d\mu(y)\, d\mu(x)\\
  &\le C\, \int_X \vint_{B(x,6\eps)}\frac{|f(y)-f(x)|^p}{\eps^p}\, d\mu(y)\, d\mu(x).
 \end{align*}
Hence we have 
\begin{equation}\label{eq:sup-gradient}
\liminf_{\eps \to 0} \int_X\mathrm{Lip} (f_\eps)^p\, d\mu  \le C\liminf_{\eps \to 0} \frac{1}{\eps^p}\int_X\int_{B(x,\eps)}\frac{|f(x)-f(y)|^p}{\mu(B(x,\eps))}d\mu(y)\,d\mu(x) <+\infty.
\end{equation}
In a similar manner, we can also show that 
\[
\int_X|f_\eps(x)-f(x)|^p\, d\mu(x)\le C \eps^p \int_X\int_{B(x,6\eps)}\frac{|f(x)-f(y)|^p}{\eps^p \mu(B(x,\eps))}d\mu(y)\,d\mu(x) .
\]
Therefore $f_{\eps} \to f$ in $L^p(X,\mu)$. By now the proof is  complete.
\end{proof}

\begin{remark}\label{Newton KS}
It follows from Theorem \ref{equi poinc} and \cite{MR1708448} (or \cite[Theorem 10.1.1]{HKST15}) that if $p>1$ and the $p$-Poincar\'e inequality is satisfied,  then the Korevaar-Schoen-Sobolev space $KS^{1,p}(X)$ coincides (with equivalent norm) with the Newtonian Sobolev space $N^{1,p}(X)$ introduced by Shanmugalingam in \cite{Shanmu00}. On the other hand, if $p=1$ and the $1$-Poincar\'e inequality is satisfied,  then the Korevaar-Schoen-Sobolev space $KS^{1,1}(X)$ coincides (with equivalent norm) with the BV space introduced by Miranda in \cite{MR2005202}; This fact was first observed in \cite{MR3558354}. It follows that if $p \ge 1$ and the $p$-Poincar\'e inequality is satisfied then $KS^{1,p}(X)$ is dense in $L^p(X,\mu)$.
\end{remark}

\begin{remark}\label{existence limit poincare}
In the previous theorem, the property $\mathcal{P}(p,1)$ implies in particular that
\[
\limsup_{\eps \to 0} \frac{1}{\eps^p}\int_X\int_{B(x,\eps)}\frac{|f(x)-f(y)|^p}{\mu(B(x,\eps))}d\mu(y)\,d\mu(x) \le C \liminf_{\eps \to 0} \frac{1}{\eps^p}\int_X\int_{B(x,\eps)}\frac{|f(x)-f(y)|^p}{\mu(B(x,\eps))}d\mu(y)\,d\mu(x).
\]
It is therefore natural to ask whether or not the limit actually exists, i.e. if the inequality holds with $C=1$. It has been recently proved in \cite{MR4375837} (see also \cite{han2021asymptotic}) that under the additional condition that the tangent space in the Gromov-Hausdorff sense is Euclidean with fixed dimension, the limit  exists if $p>1$ and $f \in KS^{1,p}(X)$ and is given by
\[
\lim_{\eps \to 0} \frac{1}{\eps^p}\int_X\int_{B(x,\eps)}\frac{|f(x)-f(y)|^p}{\mu(B(x,\eps))}d\mu(y)\,d\mu(x) = \mathrm{Ch}_p (f)
\]
where $\mathrm{Ch}_p$ is (a multiple of) the Cheeger $p$-energy.
\end{remark}

\begin{remark}
In the previous proof, the upper bound
\[
   \inf_{f_n} \liminf_{n \to +\infty} \int_{X} (\mathrm{Lip} f_n )(y)^p d\mu (y) \le C \liminf_{\eps \to 0} \frac{1}{\eps^p}\int_X\int_{B(x,\eps)}\frac{|f(x)-f(y)|^p}{\mu(B(x,\eps))}d\mu(y)\,d\mu(x)
\]
does not use the $p$-Poincar\'e inequality and therefore always holds in volume doubling metric measure  spaces.
\end{remark}

\begin{remark}
Using heat kernel techniques, the following has been proved in \cite{BV2}:
\begin{itemize}
\item If the 2-Poincar\'e inequality and a weak Bakry-\'Emery estimate are satisfied then $\mathcal{P}(1,1)$ holds;
\item If the 2-Poincar\'e inequality and a quasi Bakry-\'Emery estimate are satisfied then $\mathcal{P}(p,1)$ holds for every $p \ge 1$.
\end{itemize}
\end{remark}

\begin{remark}
If $(X,d)$ is complete and $p>1$, then the upper bound  \eqref{sup kor so} also follows from arguments on maximal functions. Indeed, from the Keith-Zhong theorem \cite[Theorem 12.3.9]{HKST15},  the $p$-Poincar\'e inequality \eqref{p-Poincare} implies a $(1,q)$-Poincar\'e inequality for some $1 \le q <p$. From \cite[Theorem 8.1.7]{HKST15}, this $q$-Poincar\'e inequality implies the pointwise estimate
\begin{align}\label{pointwise}
|f(x)-f(y)|\le C d(x,y) ( \mathcal{M} ((\mathrm{Lip} f)^q)(x)+\mathcal{M} ((\mathrm{Lip} f)^q)(y))^{1/q}
\end{align}
where 
\[
 \mathcal{M} ((\mathrm{Lip} f)^q)(x)=\sup_{r >0} \fint_{B(x,r)} (\mathrm{Lip} f)^q(y) d\mu(y)
\]
is the maximal function associated to $(\mathrm{Lip} f)^q$. Since $q<p$, from $L^{p/q}$-boundedness of the maximal function one has 
\begin{align}\label{maximal}
\int_X \mathcal{M}( (\mathrm{Lip} f)^q)^{p/q} (y) d\mu(y) \le C \int_X (\mathrm{Lip} f)^p (y) d\mu(y)
\end{align}
and \eqref{sup kor so} then directly follows from \eqref{pointwise} and \eqref{maximal}.
\end{remark}

\subsection{Generalized Poincar\'e inequalities and controlled cutoffs}

In this section, we are interested in sufficient conditions for $\mathcal{P}(p,\alpha)$, where the parameter $\alpha$ is possibly greater than one. We make a further assumption on the space $(X,d)$ and assume that it is compact. Concerning the measure $\mu$ we  assume that it is a Radon measure and still assume that it is doubling. We denote by $C(X)$ the space of continuous functions on $X$ and by $\mathcal{B}(X)$ the class of Borel sets in $X$.
\begin{defn}
A local transition kernel $\{\rho_n , n \in \mathbb{N} \}$ on $X$  is a sequence of Radon measures
\[
\rho_n : \mathcal{B} (X ) \otimes \mathcal{B} (X ) \rightarrow \mathbb{R}_{\ge 0}
\]
such that for any closed sets $A,B \subset X$ with $d(A,B)>0$
\[
\lim_{n \to +\infty}  \int_A \int_{B} d\rho_n(x, y) =0.
\]
\end{defn}

\begin{example}
\
\begin{itemize}
\item Define for $r>0$, and $\alpha \ge 0$ the Korevaar-Schoen transition kernel
\[
d\rho_r(x,y)= \frac{1_{B(x,r)} (y)}{r^\alpha \mu (B(x,r))} d\mu(y) d\mu(x).
\]
Then, the locality property
\[
\lim_{r \to 0}  \int_A \int_B d\rho_r(x,y)=0
\]
 when $d(A,B)>0$ is easily checked.
\item Consider the graph approximation $V_r$, $r>0$ of $(X,d,\mu)$ as in Section 3 of \cite{MR2161694}. For $\alpha \ge 0$, consider the transition kernel defined by
\[
\rho_r (A,B)=\frac{1}{r^\alpha} \mathrm{Card} \left\{  (x,y) \in (A \cap V_r) \times (B \cap V_r):   x \sim y\right\}, \quad r>0,
\]
where $x \sim y$ means that $x$ and $y$ are neighbors in the graph $V_r$, and $\mathrm{Card}$ denotes the number of elements in the set. Then, the locality property
\[
\lim_{r \to 0}  \int_A \int_B d\rho_r(x,y)=0
\]
 when $d(A,B)>0$ is also easily checked.
\end{itemize}
\end{example}

Given a local transition kernel $\rho_n$ and $p  \ge1$, for $f \in C(X)$ we consider the sequence of Radon measures
\[
\nu_{n,p}(f,A)= \int_A \int_X | f(x)-f(y)|^p d\rho_n(x, y) , \quad A \in \mathcal{B}(X).
\]
We will denote
\[
\mathcal{F}_p=\left\{ f \in C(X):\, \sup_{n} \nu_{n,p}(f,X) <+\infty \right\}.
\]

 Let $\alpha >0$. To prove the property $\mathcal{P}(p,\alpha)$ in that setting, we  consider the following two conditions:

\begin{defn}\label{Poincloc}
We will say that the generalized $p$-Poincar\'e inequality holds if there exists $C>0$ and $\lambda \ge 1$ such that for every $f\in \mathcal{F}_{p}$, $x \in X$, $r>0$,
\begin{align}\label{p-Poincare generalized}
\int_{B(x,r)} | f(y) -f_{B(x,r)}|^p d\mu (y) \le C r^{p\alpha} \liminf_{n \to +\infty} \nu_{n,p} (f,B(x,\lambda r))
\end{align}
\end{defn}
\begin{defn}\label{CCloc} 
 We will say that the controlled cutoff condition holds if for every $\eps >0$ there exists a  covering $\{B_i^\eps=B(x_i,\eps)\}_i$  of $X$, so that   the family $\{B_i^{5\eps}\}_i$ has the bounded overlap property (uniformly in $\eps$) and  an associated family of functions $\pip_i^\eps$ such that:
  \begin{itemize}
 \item $\pip_i^\eps \in \mathcal{F}_{p}$;
\item $0\le \pip_i^\eps\le 1$ on $X$;
\item $\sum_i\pip_i^\eps=1$ on $X$; 
\item $\pip_i^\eps=0$ in $X\setminus B_i^{2\eps}$;
\item $ \limsup_{n \to +\infty} \nu_{n,p} (\pip_i^\eps,X)  \le C \frac{\mu(B_i^{\eps})}{\eps^{\alpha p}}$.
\end{itemize}
\end{defn}

\begin{remark}
Even though  Definition \ref{CCloc}  might seem difficult to check at first, it is in essence a capacity estimate requirement for balls. For instance, assume that for every ball $B$ with radius $\varepsilon$ one can find a non-negative $\phi \in  \mathcal F_p$  supported inside of $B$  with  $\phi=1$ on $ B/2$ such that
\[
 \limsup_{n \to +\infty} \nu_{n,p} (\phi,X)\le C \frac{\mu(B)}{\eps^{\alpha p}}.
\]
Then the controlled cutoff condition of Definition \ref{CCloc}  is proved to be satisfied using covering by balls satisfying the bounded overlap property as in Section 4.1 in \cite{HKST15}; see also Lemma 2.5 in \cite{MR4099475} for a related discussion.
\end{remark}

\begin{remark}
Definition \ref{Poincloc} is a generalized Poincar\'e on  balls and Definition \ref{CCloc} involves a covering of the space by balls. However, the examples of nested fractals in Section \ref{section fractal} show that in some situations it is more convenient to work with other basis of the topology, like simplices in the case of fractals.
\end{remark}

We now show that the combination of the previous conditions implies the property $\mathcal{P}(p,\alpha)$.

\begin{theorem}\label{equi poinc gene}
The  generalized $p$-Poincar\'e inequality \eqref{p-Poincare generalized} and  the controlled cutoff condition imply $\mathcal{P}(p,\alpha)$. Moreover, in that case, on $KS^{\alpha,p}(X)$
\[
 \mathrm{Var}_p (f)^p \simeq \inf_{f_m} \liminf_{m \to +\infty} \liminf_{n \to +\infty} \nu_{n,p}(f_m,X)
\]
where the infimum is taken over the sequences of functions $f_m \in \mathcal{F}_{p}$ such that $f_m \to f$ in $L^p (X,\mu)$. 
\end{theorem}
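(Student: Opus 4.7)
The plan is to mirror the two-sided strategy in the proof of Theorem \ref{equi poinc}, replacing the Lipschitz gradient by the abstract transition kernel $\rho_n$. I would first prove the upper bound
\[
\sup_{r>0} E_{p,\alpha}(f,r) \le C\,\inf_{f_m} \liminf_{m\to\infty}\liminf_{n\to\infty}\nu_{n,p}(f_m,X),
\]
then match it with a lower bound, and finally combine the two to get $\mathcal{P}(p,\alpha)$ and the stated equivalence.

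\textbf{Upper bound.} Fix $r>0$, and using Proposition \ref{partition of unity} choose a covering $\{B(x_i,r)\}_i$ of $X$ such that $\{B(x_i,2\lambda r)\}_i$ has uniformly bounded overlap. Copying the first part of the proof of Theorem \ref{equi poinc} verbatim, I split
\[
|f(x)-f(y)|^p \le 2^{p-1}\bigl(|f(x)-f_{B(x_i,r)}|^p + |f(y)-f_{B(x_i,r)}|^p\bigr),
\]
reduce the inner averages over $\mu(B(x,r))^{-1}$ to averages over $\mu(B(x_i,r))^{-1}$ by doubling, and then apply the generalized Poincaré inequality \eqref{p-Poincare generalized} in each ball $B(x_i,r)$. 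The Poincaré inequality on iterated balls $B(x_i,2r)$ together with bounded overlap then gives, for every $f\in \mathcal{F}_p$,
\[
E_{p,\alpha}(f,r) \le C \liminf_{n\to\infty}\nu_{n,p}(f,X).
\]
Since the right-hand side is independent of $r$, this yields $\sup_{r>0}E_{p,\alpha}(f,r) \le C\,\liminf_n\nu_{n,p}(f,X)$. The cutoffs $\phi_i^\varepsilon$ themselves lie in $\mathcal{F}_p$, so $KS^{\alpha,p}(X)$ contains non-constant functions. Passing to the infimum over approximants $f_m\in\mathcal{F}_p$ with $f_m\to f$ in $L^p$ (using the Minkowski inequality for $E_{p,\alpha}^{1/p}$ together with Lemma \ref{Lp comparison}) gives the claimed bound for all $f\in L^p(X,\mu)$.

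\textbf{Lower bound.} For $\varepsilon>0$ take the covering $\{B_i^\varepsilon\}_i$ and partition of unity $\{\phi_i^\varepsilon\}_i$ furnished by Definition \ref{CCloc}, and for $f\in KS^{\alpha,p}(X)$ define $f_\varepsilon=\sum_i f_{B_i^\varepsilon}\phi_i^\varepsilon$. Using $\sum_i\phi_i^\varepsilon = 1$, for $x,y\in B_j^\varepsilon$ I would write
\[
f_\varepsilon(x)-f_\varepsilon(y)=\sum_{i:\,2B_i^\varepsilon\cap 2B_j^\varepsilon\neq\emptyset}(f_{B_i^\varepsilon}-f_{B_j^\varepsilon})(\phi_i^\varepsilon(x)-\phi_i^\varepsilon(y)),
\]
apply Hölder in the finite $i$-sum, integrate against $d\rho_n$, and invoke the bounded overlap of $\{B_j^{5\varepsilon}\}_j$ to obtain
\[
\nu_{n,p}(f_\varepsilon,X)\le C\sum_j\sum_{i:\,2B_i^\varepsilon\cap 2B_j^\varepsilon\neq\emptyset}|f_{B_i^\varepsilon}-f_{B_j^\varepsilon}|^p\,\nu_{n,p}(\phi_i^\varepsilon,X).
\]
Taking $\limsup_{n\to\infty}$ and using the cutoff estimate $\limsup_n\nu_{n,p}(\phi_i^\varepsilon,X)\le C\mu(B_i^\varepsilon)/\varepsilon^{\alpha p}$, while bounding $|f_{B_i^\varepsilon}-f_{B_j^\varepsilon}|^p$ by the oscillation of $f$ on $5B_j^\varepsilon$ via Jensen, I would sum again by bounded overlap to get
\[
\limsup_{n\to\infty}\nu_{n,p}(f_\varepsilon,X)\le C\int_X\vint_{B(x,6\varepsilon)}\frac{|f(x)-f(y)|^p}{\varepsilon^{\alpha p}}\,d\mu(y)\,d\mu(x)\le C\,E_{p,\alpha}(f,6\varepsilon).
\]
A similar computation shows $\|f_\varepsilon-f\|_{L^p}\le C\varepsilon^\alpha E_{p,\alpha}(f,6\varepsilon)^{1/p}\to 0$ along any subsequence where $E_{p,\alpha}(f,6\varepsilon)$ stays bounded, so $f_\varepsilon\to f$ in $L^p$. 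Choosing $\varepsilon_m\to 0$ realizing $\liminf_{\varepsilon\to 0}E_{p,\alpha}(f,6\varepsilon)$ then gives
\[
\inf_{f_m}\liminf_{m\to\infty}\liminf_{n\to\infty}\nu_{n,p}(f_m,X)\le C\,\liminf_{r\to 0}E_{p,\alpha}(f,r).
\]
Combining this with the upper bound yields $\sup_{r>0}E_{p,\alpha}(f,r)\le C\liminf_{r\to 0}E_{p,\alpha}(f,r)$, which is $\mathcal{P}(p,\alpha)$, and the chain of inequalities sandwiches $\mathrm{Var}_p(f)^p$ between two constant multiples of the infimum, giving the equivalence.

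\textbf{Main obstacle.} The delicate step will be the double-sum estimate bounding $\nu_{n,p}(f_\varepsilon,X)$: unlike the $\mathrm{Lip}$-argument in Theorem \ref{equi poinc}, there is no pointwise gradient of $\phi_i^\varepsilon$ to invoke, and one must instead pass the increments $|\phi_i^\varepsilon(x)-\phi_i^\varepsilon(y)|^p$ through the nonlocal kernel $\rho_n$. The only tool available is the global cutoff bound $\limsup_n\nu_{n,p}(\phi_i^\varepsilon,X)\le C\mu(B_i^\varepsilon)/\varepsilon^{\alpha p}$, so the expansion of $f_\varepsilon(x)-f_\varepsilon(y)$ has to be arranged so that (i) only finitely many indices $i$ appear for each $(j,x,y)$ thanks to the finite overlap, (ii) the coefficients $|f_{B_i^\varepsilon}-f_{B_j^\varepsilon}|^p$ can be recognized as local Besov mass, and (iii) the sum over the indices $i,j$ is compatible with taking the $\limsup_{n}$ inside the finite sum. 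The locality built into the definition of $\rho_n$ and the bounded overlap of the enlarged covering are exactly the structural features that make this work.
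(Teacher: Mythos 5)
Your proposal follows essentially the same two-sided strategy as the paper: the upper bound by rerunning the first part of Theorem \ref{equi poinc} with the generalized Poincar\'e inequality \eqref{p-Poincare generalized}, and the lower bound by testing against $f_\eps=\sum_i f_{B_i^\eps}\pip_i^\eps$ built from the controlled cutoffs; your extra remarks (non-constancy of the cutoffs, Minkowski plus Lemma \ref{Lp comparison} to pass to general $f\in KS^{\alpha,p}(X)$) are correct and welcome. One caveat: your displayed inequality $\nu_{n,p}(f_\eps,X)\le C\sum_j\sum_i|f_{B_i^\eps}-f_{B_j^\eps}|^p\,\nu_{n,p}(\pip_i^\eps,X)$ is not valid for fixed $n$, because the finite telescoping expansion of $f_\eps(x)-f_\eps(y)$ over $i$ with $2B_i^\eps\cap 2B_j^\eps\ne\emptyset$ only holds when both $x$ and $y$ lie in $2B_j^\eps$; one must first split $\int_{B_j^\eps}\int_X$ into $\int_{B_j^\eps}\int_{2B_j^\eps}$ plus $\int_{B_j^\eps}\int_{X\setminus 2B_j^\eps}$ and use the locality of $\rho_n$ (together with boundedness of $f_\eps$ and finiteness of the cover on the compact $X$) to kill the far-field piece as $n\to+\infty$, so the inequality is only available at the level of $\limsup_{n}$. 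You correctly identify locality as the needed structural feature in your closing discussion, so this is a matter of making the near/far splitting explicit rather than a wrong idea; with that repair the argument matches the paper's proof.
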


\begin{proof}
Repeating the arguments of the first part of the proof of Theorem \ref{equi poinc} shows that the generalized $p$-Poincar\'e inequality implies that  for every $f \in \mathcal{F}_{p}$ and every $r>0$
\begin{align}\label{sup kor so fd}
\frac{1}{r^{\alpha p}}\int_X\int_{B(x,r)}\frac{|f(x)-f(y)|^p}{\mu(B(x,r))}d\mu(y)\,d\mu(x) \le C\liminf_{n \to +\infty} \nu_{n,p}(f,X).
\end{align}
Therefore $\mathcal{F}_{p} \subset KS^{\alpha,p}(X)$ and for every $f \in KS^{\alpha,p}(X)$ and $r>0$
\begin{align}\label{lop gh}
\frac{1}{r^{\alpha p}}\int_X\int_{B(x,r)}\frac{|f(x)-f(y)|^p}{\mu(B(x,r))}d\mu(y)\,d\mu(x) \le C \inf_{f_m} \liminf_{m \to +\infty} \liminf_{n \to +\infty} \nu_{n,p}(f_m,X)
\end{align}
where the infimum is taken over the sequences of functions $f_m \in \mathcal{F}_{p}$ such that $f_m \to f$ in $L^p (X,\mu)$. Note that at this point of the proof, we do not know that for an arbitrary $f \in KS^{\alpha,p}(X)$  there actually exists  a sequence $f_m \in \mathcal{F}_{p}$ such that $f_m \to f$ in $L^p (X,\mu)$ so we do not know yet that the  right-hand side of \eqref{lop gh} is finite; This will be established below by using the controlled cutoff partitions of unity.

Fix  $\eps>0$. Let $\{B_i^\eps=B(x_i,\eps)\}_i$ be an $\eps$-covering of $X$, so that   the family $\{B_i^{5\eps}\}_i$ has the bounded overlap property. Let $\pip_i^\eps$ be a  controlled cutoff partition of unity subordinated to this cover. For $f \in KS^{\alpha,p}(X)$, we  set 
\[
f_\eps:=\sum_i f_{B_i^\eps}\, \pip_i^\eps,
\]
where $f_{B_i^\eps}=\vint_{B_i^\eps} fd\mu$. We first note that $f_\eps \in \mathcal{F}_{p}$ because $\mathcal{F}_{p}$ is a linear space and the above sum is finite since $X$ is compact. We now claim that $f_\eps \to f$ in $L^p(X,\mu)$ when $\eps \to 0$. Indeed, for $x \in B_j^\eps$
\begin{align*}
|f(x)-f_\eps(x)| &\le  \sum_{i:2B_i^\eps\cap 2B_j^\eps\ne\emptyset}|f (x)-f_{B_i^\eps}|\pip_i^\eps(x) \\
 & \le  \sum_{i:2B_i^\eps\cap 2B_j^\eps\ne\emptyset} \left( \fint_{B_i^\eps} | f(x) -f(y)|^p d\mu(y)\right)^{1/p} \\
 & \le C \left( \fint_{B_j^{5\eps}} | f(x) -f(y)|^p d\mu(y)\right)^{1/p}. 
\end{align*}
Therefore we have

\begin{align*}
\int_X |f(x)-f_\eps(x)|^p d\mu(x) & \le C \sum_j \int_{B_j^\eps} \fint_{B_j^{5\eps}} | f(x) -f(y)|^p d\mu(y) d\mu(x) \\
 & \le C  \sum_j \int_{B_j^\eps} \fint_{B(x,6\eps)} | f(x) -f(y)|^p d\mu(y) d\mu(x) \\
 &\le C  \int_{X} \fint_{B(x,6\eps)} | f(x) -f(y)|^p d\mu(y) d\mu(x) \\
 &= C \eps^{p\alpha}  \int_{X} \fint_{B(x,6\eps)}\frac{ | f(x) -f(y)|^p}{\eps^{p\alpha}} d\mu(y) d\mu(x). 
\end{align*}
Therefore, since $f \in KS^{\alpha,p}(X)$, we deduce that $f_\eps \to f$ in $L^p(X,\mu)$.
In particular, $\mathcal{F}_{p}$ is therefore $L^p$-dense in $KS^{\alpha,p}(X)$.  Now, for $x,y\in 2B_j^\eps$ we see that
\begin{align*}
|f_\eps(x)-f_\eps(y)|&=\left|\sum_{i:2B_i^\eps\cap 2B_j^\eps\ne\emptyset}(f_{B_i^\eps}-f_{B_j^\eps})(\pip_i^\eps(x)-\pip_i^\eps(y))\right|\\
&\le \sum_{i:2B_i^\eps\cap 2B_j^\eps\ne\emptyset}|f_{B_i^\eps}-f_{B_j^\eps}||\pip_i^\eps(x)-\pip_i^\eps(y)|.
\end{align*}
Therefore,  we have  that
\[
\int_{B_j^\eps} \int_{2B_j^\eps} |f_\eps(x)-f_\eps(y)|^p  d\rho_n(x, y) \le C \sum_{i:2B_i^\eps\cap 2B_j^\eps\ne\emptyset}|f_{B_i^\eps}-f_{B_j^\eps}|^p \int_{X} \int_{X} |\pip_i^\eps(x)-\pip_i^\eps(y)|^p d\rho_n(x, y) .
\]
Using the locality property of $\rho_n$ one has
\[
\lim_{n \to +\infty} \int_{B_j^\eps} \int_{X \setminus 2B_j^\eps} |f_\eps(x)-f_\eps(y)|^p  d\rho_n(x, y) =0.
\]
We deduce therefore
\begin{align*}
\limsup_{n \to +\infty} \int_{B_j^\eps} \int_{X} |f_\eps(x)-f_\eps(y)|^p  d\rho_n(x, y) \le C \sum_{i:2B_i^\eps\cap 2B_j^\eps\ne\emptyset}|f_{B_i^\eps}-f_{B_j^\eps}|^p \limsup_{n \to +\infty} \int_{X} \int_{X} |\pip_i^\eps(x)-\pip_i^\eps(y)|^p d\rho_n(x, y) .
\end{align*}
From the controlled cutoff condition this yields
\[
\limsup_{n \to +\infty} \int_{B_j^\eps} \int_{X} |f_\eps(x)-f_\eps(y)|^p  d\rho_n(x, y) \le \frac{C}{\eps^{\alpha p}} \sum_{i:2B_i^\eps\cap 2B_j^\eps\ne\emptyset}|f_{B_i^\eps}-f_{B_j^\eps}|^p \mu (B_i^\eps).
\]
Using the same arguments as in the second part of the proof of Theorem \ref{equi poinc} to control the term  $\sum_{i:2B_i^\eps\cap 2B_j^\eps\ne\emptyset}|f_{B_i^\eps}-f_{B_j^\eps}|^p \mu (B_i^\eps)$, we thus see that
\[
\limsup_{n \to +\infty} \int_{B_j^\eps} \int_{X} |f_\eps(x)-f_\eps(y)|^p  d\rho_n(x, y)  \le C   \int_{5B_j^\eps} \vint_{B(x,6\eps)}\frac{|f(y)-f(x)|^p}{\eps^{\alpha p}}\, d\mu(y)\, d\mu(x).
\]
Summing up over $j$ and using the bounded overlap property gives
\[
\limsup_{n \to +\infty} \int_{X} \int_{X} |f_\eps(x)-f_\eps(y)|^p  d\rho_n(x, y)  \le C   \int_{X} \vint_{B(x,6\eps)}\frac{|f(y)-f(x)|^p}{\eps^{\alpha p}}\, d\mu(y)\, d\mu(x).
\]
This implies
\[
\liminf_{\eps \to 0} \liminf_{n \to +\infty} \nu_{n,p}( f_\eps,X)\le C \liminf_{\eps \to 0} \int_{X} \vint_{B(x,\eps)}\frac{|f(y)-f(x)|^p}{\eps^{\alpha p}}\, d\mu(y)\, d\mu(x).
\]
Going back to \eqref{lop gh} we conclude that 
\[
\frac{1}{r^{\alpha p}}\int_X\int_{B(x,r)}\frac{|f(x)-f(y)|^p}{\mu(B(x,r))}d\mu(y)\,d\mu(x)  \le C   \liminf_{\eps \to 0} \int_{X} \vint_{B(x,\eps)}\frac{|f(y)-f(x)|^p}{\eps^{\alpha p}}\, d\mu(y)\, d\mu(x)
\]
and
\[
 \mathrm{Var}_p (f)^p \simeq \inf_{f_m} \liminf_{m \to +\infty} \liminf_{n \to +\infty} \nu_{n,p}(f_m,X).
\]
\end{proof}

To illustrate the scope of our results, we now discuss a situation where the above results can be used when $p=2$. 

\subsubsection*{Generalized Poincar\'e inequalities and controlled cutoffs in Dirichlet spaces}

Let $(X,d,\mu)$ be a compact metric measure space where $\mu$ is a doubling Radon measure. Let $(\mathcal{E}, \mathrm{dom} \, \mathcal{E})$ be a regular and strongly  local Dirichlet form on $L^2(X,\mu)$. Let $P_t$ be the semigroup associated with $\mathcal{E}$ and $p_t(x,dy)$ be the associated heat kernel measures, i.e. for $f \in L^\infty(X,\mu)$,
\[
P_t f(x)=\int_X f(y)p_t(x,dy), \, t>0.
\]
In that setting, one can construct a local transition kernel by considering the family of Radon measures
\[
d\rho_t (x,y)=\frac{1}{t} p_t(y,dx) d\mu (y), \, t>0.
\]
Note that the locality property of $\rho_t$:
\[
\lim_{t \to 0} \frac{1}{t} \int_A \int_B p_t(y,dx) d\mu (y)=0
\]
if $d(A,B)>0$ follows from the assumed locality of the Dirichlet form (see for instance the proof of Proposition 2.1 in \cite{MR2357208}). If $f \in \mathcal{F}_2=\mathrm{dom} \, \mathcal E \cap C(X)$, the measures
\[
\nu_t (f,A) = \frac{1}{t} \int_A \int_X (f(x)-f(y))^2p_t(y,dx) d\mu (y),
\]
converge weakly as $t \to 0$ to the so-called energy measure $d\Gamma(f,f)$ of $f$, see e.g. \cite[(3.2.14)]{MR1303354}. In that setting, the validity of the generalized $2$-Poincar\'e inequality (with $\alpha=d_w/2$, $d_w$ being the walk dimension) and of the controlled cutoff condition can be checked under suitable assumptions by using the results in \cite{MR2228569}. In particular, one gets the following result:

\begin{proposition}\label{appli to}
Assume that the semigroup $P_t$ has a measurable heat kernel $p_t(x,y)$ satisfying, for some $c_{1},c_{2}, c_3, c_4 \in(0,\infty)$ and $ d_h \ge 1, d_{w}\in [2,+\infty)$ the sub-Gaussian estimates:
 \begin{equation}\label{eq:subGauss-upper1}
 c_{1}t^{-d_{h}/d_{w}}\exp\biggl(-c_{2}\Bigl(\frac{d(x,y)^{d_{w}}}{t}\Bigr)^{\frac{1}{d_{w}-1}}\biggr) 
 \le p_{t}(x,y)\leq c_{3}t^{-d_{h}/d_{w}}\exp\biggl(-c_{4}\Bigl(\frac{d(x,y)^{d_{w}}}{t}\Bigr)^{\frac{1}{d_{w}-1}}\biggr)
 \end{equation}
 for $\mu$-a.e. $(x,y)\in X\times X$ and each $t\in\bigl(0, \mathrm{diam}(X) ^{1/d_w} \bigr)$.  Then we have  $KS^{d_w/2,2}(X)= \mathrm{dom} \, \mathcal{E}$. Moreover, there exist  constants $c,C>0$ such that for every $f \in   KS^{d_w/2,2}(X)$,
\[
c\sup_{r >0} E_{2,d_w/2} (f,r) \le \mathcal{E}(f,f) \le C \liminf_{r \to 0} E_{2,d_w/2} (f,r). 
\]
In particular,  the property $\mathcal{P}( 2, d_w/2)$ holds.
\end{proposition}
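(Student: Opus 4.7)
The plan is to deduce Proposition \ref{appli to} from Theorem \ref{equi poinc gene} by verifying the two hypotheses (generalized $2$-Poincar\'e inequality and controlled cutoff condition) in this Dirichlet space setting, and then identifying $\liminf_{t \to 0} \nu_{t,2}(f,X)$ with a multiple of $\mathcal{E}(f,f)$. The parameter value will be $\alpha = d_w/2$, which is dictated by the heat kernel scaling: the sub-Gaussian estimates imply that $\frac{1}{t} p_t(y, B(y,r)) \sim r^{-d_w}$ when $t \sim r^{d_w}$.

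First, I would invoke \cite{MR2228569} to establish the two analytic hypotheses. Under sub-Gaussian estimates \eqref{eq:subGauss-upper1}, the results of Barlow--Bass--Kumagai give both a Poincar\'e inequality of the form
\[
\int_{B(x,r)} |f(y) - f_{B(x,r)}|^2\, d\mu(y) \le C\, r^{d_w}\, \Gamma(f,f)(B(x,\lambda r))
\]
for $f \in \mathrm{dom}\,\mathcal{E} \cap C(X)$, and a generalized capacity (cutoff Sobolev) estimate: for every ball $B = B(x_0,\varepsilon)$ one can find $\phi \in \mathrm{dom}\,\mathcal{E} \cap C(X)$ with $0 \le \phi \le 1$, $\phi = 1$ on $\tfrac12 B$, $\phi = 0$ off $B$ and $\mathcal{E}(\phi,\phi) \le C\,\mu(B)/\varepsilon^{d_w}$. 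Since the measures $\nu_{t,2}(f,\cdot)$ converge weakly to $2\,d\Gamma(f,f)$ as $t \to 0$, and since $\Gamma(f,f)(X) \le \liminf_{t \to 0} \nu_{t,2}(f,X)$ on any open set by the Portmanteau-type weak convergence for energy measures, these translate directly into Definitions \ref{Poincloc} and \ref{CCloc} with $\alpha = d_w/2$. The standard covering-by-balls construction (as in the remark following Definition \ref{CCloc}) then packages the pointwise cutoffs into the partition of unity required by \ref{CCloc}.

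Granted the two hypotheses, Theorem \ref{equi poinc gene} immediately yields $\mathcal{P}(2, d_w/2)$ together with
\[
\mathrm{Var}_2(f)^2 \simeq \inf_{f_m} \liminf_{m \to \infty} \liminf_{t \to 0} \nu_{t,2}(f_m, X),
\]
the infimum taken over $f_m \in \mathcal{F}_2 = \mathrm{dom}\,\mathcal{E} \cap C(X)$ with $f_m \to f$ in $L^2(X,\mu)$. For such $f_m$ the standard identity $\liminf_{t \to 0} \nu_{t,2}(f_m, X) = 2\,\mathcal{E}(f_m, f_m)$ (which is the $L^2$ interpretation of the convergence $\nu_{t,2}(f_m,\cdot) \to 2\,d\Gamma(f_m,f_m)$) reduces the right-hand side to $\inf_{f_m} \liminf_m \mathcal{E}(f_m,f_m)$. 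The lower semicontinuity of $\mathcal{E}$ with respect to $L^2$ convergence then shows this infimum equals $2\,\mathcal{E}(f,f)$ precisely when $f \in \mathrm{dom}\,\mathcal{E}$ (and is $+\infty$ otherwise), using the $L^2$ density of $\mathrm{dom}\,\mathcal{E} \cap C(X)$ in $\mathrm{dom}\,\mathcal{E}$ provided by the regularity of $(\mathcal{E}, \mathrm{dom}\,\mathcal{E})$. This simultaneously gives the identification $KS^{d_w/2,2}(X) = \mathrm{dom}\,\mathcal{E}$ and the two-sided bound $c\,\sup_{r>0} E_{2,d_w/2}(f,r) \le \mathcal{E}(f,f) \le C\,\liminf_{r \to 0} E_{2,d_w/2}(f,r)$.

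The main obstacle is the verification step inside \cite{MR2228569}: translating sub-Gaussian heat kernel estimates into a Poincar\'e inequality phrased in terms of $\liminf_{t \to 0} \nu_{t,2}(f, B(x,\lambda r))$ rather than $\Gamma(f,f)(B(x,\lambda r))$, and ensuring the cutoff functions produced there actually lie in $\mathcal{F}_2$ with the claimed quantitative bound $\limsup_{t \to 0} \nu_{t,2}(\phi,X) \le C\,\mu(B)/\varepsilon^{d_w}$ rather than merely $\mathcal{E}(\phi,\phi) \le C\,\mu(B)/\varepsilon^{d_w}$. Once the weak convergence $\nu_{t,2}(\phi,\cdot) \to 2\,d\Gamma(\phi,\phi)$ is invoked carefully on the whole space $X$ (which is compact, so no loss at infinity), this last point reduces to the $\Gamma$-energy cutoff inequality already available, closing the argument.
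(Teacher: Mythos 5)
Your proposal is correct and follows essentially the same route as the paper: the paper presents this proposition precisely as an application of Theorem \ref{equi poinc gene} with the local transition kernel $d\rho_t(x,y)=\frac{1}{t}p_t(y,dx)d\mu(y)$, verifying the generalized $2$-Poincar\'e inequality and the controlled cutoff condition (with $\alpha=d_w/2$) via the results of \cite{MR2228569}, and then identifying $\liminf_{t\to 0}\nu_{t,2}(f,X)$ with (a multiple of) $\mathcal{E}(f,f)$ through the weak convergence of $\nu_{t,2}(f,\cdot)$ to the energy measure together with lower semicontinuity and regularity of the Dirichlet form. Your Portmanteau argument for passing from $\Gamma$-based to $\nu_t$-based statements is the right way to close the translation step you flag as the main obstacle.
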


The identification of the domain of the Dirichlet form as a Besov-Lipschitz space seems to have been first uncovered in \cite{MR1400205}, see also  \cite[Theorem 4.2]{GHL03} and  \cite{MR1800954}. The property $\mathcal{P}(2, d_w/2)$ was first pointed out in  \cite[Proposition 3.5]{MR4196573} where it was proved using completely different methods.


\section{Korevaar-Schoen-Sobolev spaces  and $\mathcal{P}(p,\alpha_p)$  on some simple fractals}\label{section fractal}

The approach to Sobolev spaces based on upper gradients does not work on fractals, see the comments \cite[Page 409]{HKST15} and Remark \ref{comment N} below. Therefore fractals offer an interesting playground to test the scope of the Korevaar-Schoen approach. We  study here  two concrete examples, the Vicsek set and the Sierpi\'nski  gasket,  and prove that those two examples satisfy the property $\mathcal{P}(p,\alpha_p)$ for a critical exponent $\alpha_p>1$.  We moreover prove that the corresponding Korevaar-Schoen-Sobolev spaces are dense in $L^p$. The Vicsek set and the Sierpi\'nski  gasket are examples of  nested fractals (a concept introduced in \cite{MR988082}), and it appears reasonable to infer that our results could be extended to a large class of such nested fractals\footnote{March 2024 update: This extension was recently carried out in \cite{chang2023weak} using similar techniques as we introduce here, see also \cite{gao2023heat}.}. More generally, the theory of Korevaar-Schoen-Sobolev spaces on  p.c.f. fractals seems to be promising to explore in view of the recent results of \cite{CaoGuQiu}.
For different and interesting other approaches to the theory of Sobolev spaces on fractals we  refer the interested reader to \cite{CaoGuQiu}, \cite{MR4175733}, \cite{Kigami}. Those approaches define the Sobolev spaces as the domains of limits of discrete $p$-energies. This is a natural approach in view of the case $p=2$ which yields a rich theory of Dirichlet forms, see \cite{MR1668115} and \cite{MR1840042}.  We explain below how those two approaches coincide on the Vicsek set and the Sierpi\'nski  gasket. It is worth mentioning that  Kigami's general  approach has been recently carried out in great details by Shimizu \cite{Shimizu} for the Sierpi\'nski  carpet. It has been proved there that the domain of the constructed  $p$-energy  turns out to be a Korevaar-Schoen-Sobolev space  when $p$ is larger than the Ahlfors regular conformal dimension of the carpet. The validity or not of the property $\mathcal{P}(p,\alpha_p)$ would be an interesting question to settle  then since the carpet is an example of infinitely ramified fractals, making its geometry very different from the two examples treated below\footnote{March 2024 update: As already pointed out in the introduction, the validity of $\mathcal{P}(p,\alpha_p)$ in the Sierpi\'nski  carpet  was recently proved  for $p$ greater than the Ahlfors regular conformal dimension in  \cite{yang2024korevaarschoen}  and then  shortly after  for every $p >1$ in \cite{murugan2023firstorder}.}.
Finally, let us mention that there also exists a large amount of literature concerning the construction of Sobolev-like and more generally Besov-like functional spaces on fractals using the Laplace operator as a central object, see for instance \cite{MR4347459}, \cite{MR4415965}, \cite{MR2039954}, \cite{MR1962353} and the references therein. When $p \neq 2$, some inclusions are known (following for instance from \cite{BV1}), but making an exact identification  between those spaces and the Korevaar-Schoen ones is a challenging interesting problem  for the future. In a nutshell, to make such connections, it would be interesting to study the possible continuity properties   in the Korevaar-Schoen-Sobolev spaces  of some suitable fractional power of the Laplace operator; This is a boundedness of Riesz transform type problem, see Section 3.5 in \cite{BV3}.

\subsection{Vicsek set}

\begin{figure}[htb]\label{figure1}
 \noindent
 \makebox[\textwidth]{\includegraphics[height=0.22\textwidth]{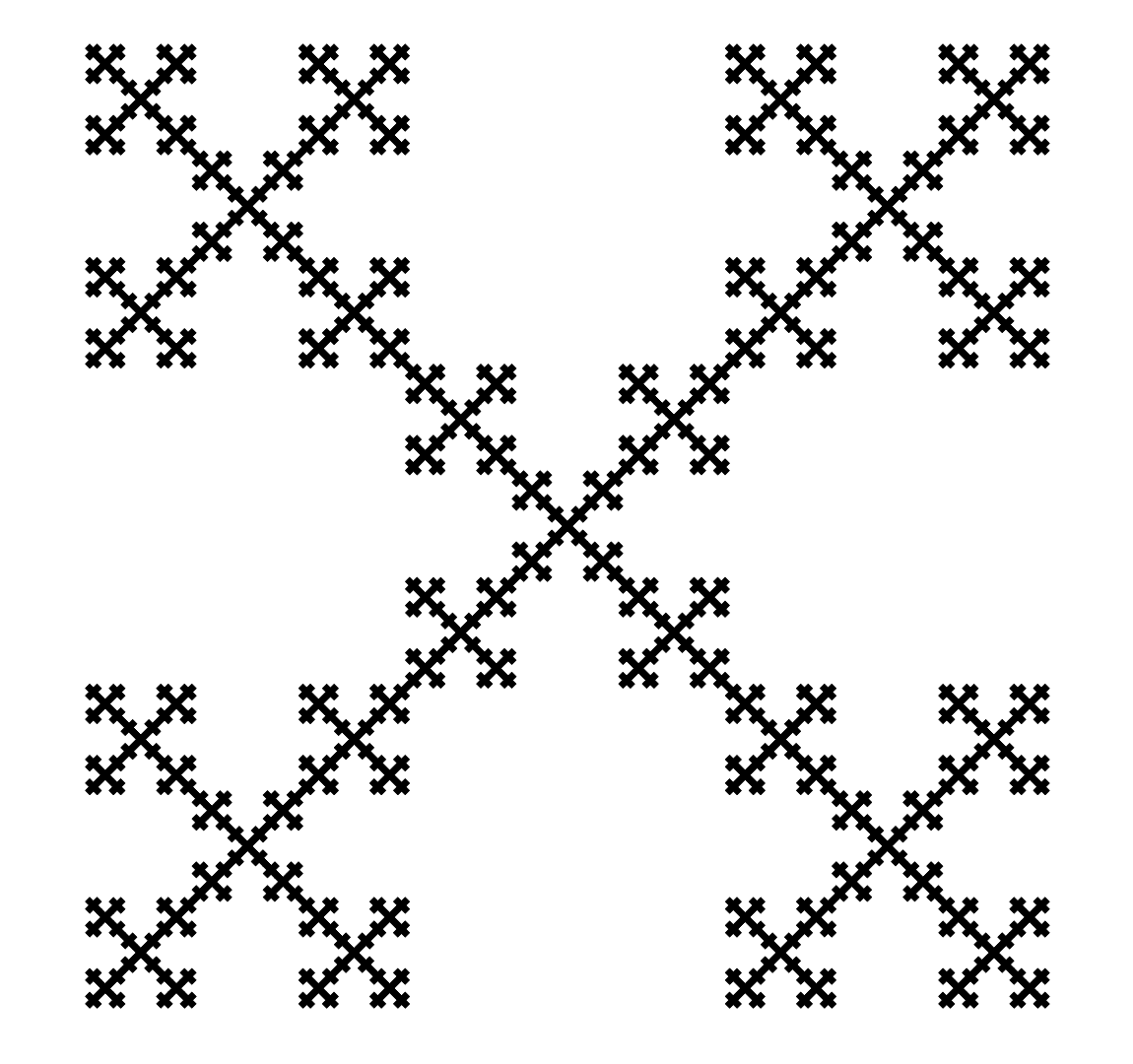}}
  	\caption{Vicsek set}
\end{figure}
Let $q_1=(-\sqrt{2}/2,\sqrt{2}/2)$,$q_2=(\sqrt{2}/2,\sqrt{2}/2)$ , $q_3=(\sqrt{2}/2,-\sqrt{2}/2)$, and $q_4=(-\sqrt{2}/2,-\sqrt{2}/2)$ be the 4 corners of the unit square and let $q_5=(0,0)$ be the center of that square. Define $\psi_i(z)=\frac13(z-q_i)+q_i$ for $1\le i\le 5$. Then the Vicsek set $K$ is the unique non-empty compact set such that 
\[
K=\bigcup_{i=1}^5 \psi_i(K).
\]

\begin{figure}[htb]\label{figure22}
 \noindent
 \makebox[\textwidth]{\includegraphics[height=0.20\textwidth]{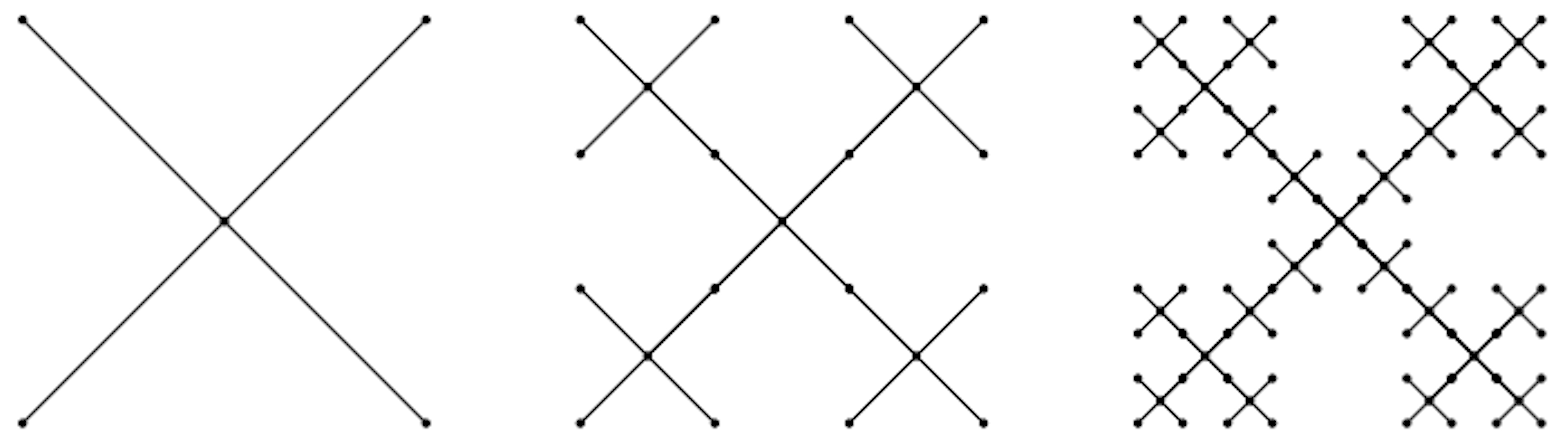}}
  	\caption{Vicsek approximating cable systems $\bar{V}_0$, $\bar{V}_1$ and $\bar{V}_2$}
\end{figure}

Denote $W=\{1, 2,3,4, 5\}$ and $W_n=\{1, 2,3,4, 5\}^n$ for $n\ge 1$. For any $w=(i_1, \cdots, i_n ) \in W_n$, we denote by $\Psi_w$ the contraction mapping $\psi_{i_1}\circ \cdots\circ \psi_{i_n}$ and write $K_w:=\Psi_w(K)$. The set $K_w$ is called  an $n$-simplex.
Let $V_0=\{q_1, q_2, q_3,q_4,q_5\}$. 
We define a sequence of  sets of vertices  $\{V_n\}_{n\ge 0}$ inductively by
\[
V_{n+1}=\bigcup_{i=1}^5 \psi_i(V_n).
\]
Let then $\bar{V}_0$ be the cable system included in $K$ with vertices $V_0=\{q_1,q_2, q_3, q_4,q_5\}$
 and  consider the sequence of cable systems  $\bar{V}_n$ with vertices in $V_n$ inductively defined as follows.  The first cable system is  $\bar{V}_0$ and then 
\[
\bar{V}_{n+1} = \bigcup_{i=1}^5 \psi_i (\bar{V}_n).
\]
Note that $\bar{V}_n \subset K$ and that $K$ is the closure of $\cup_{n \ge 0} \bar{V}_n $. 
The set 
\[
\mathcal{S}= \bigcup_{n \ge 0} \bar{V}_n
\]
is called the skeleton of $K$ and is dense in $K$.   Therefore we have a natural increasing sequence of Vicsek cable systems $\{\bar{V}_n\}_{n\ge 0}$ whose edges have length $3^{-n}$ and whose set of vertices is $V_n$ (see Figure 2). From this viewpoint, the Vicsek set $K$ is seen as a limit of the cable systems $\{\bar{V}_n\}_{n\ge 0}$. 

On $K$ we will consider the  geodesic distance $d$: For $x,y \in \bar{V}_n$, $d(x,y)$ is defined as the length of the geodesic path between $x$ and $y$ and $d$ is then extended by continuity to $K \times K$. The geodesic distance $d$ is then bi-Lipschitz equivalent to the restriction of the Euclidean distance to $K$.  The normalized Hausdorff  measure $\mu$ is the unique Borel measure on $K$ such that for every $i_1, \cdots, i_n \in \{1,2,3, 4,5\}$
\[
\mu(\psi_{i_1} \circ \cdots \circ \psi_{i_n} (K))=5^{-n}.
\]
The Hausdorff dimension of $K$ is then $d_h=\frac{\log 5}{\log 3}$ and the metric space $(K,d)$ is $d_h$-Ahlfors regular in the sense that there exist constants $c,C>0$ such that for every $x \in K$, $r \in (0, \mathrm{diam}\,  K]$,
\[
c \,r^{d_h} \le \mu (B(x,r)) \le Cr^{d_h},
\]
where, as before,  $B(x,r)=\left\{ y \in K: d(x,y) < r \right\}$ denotes the ball with center $x$ and radius $r$ and $\mathrm{diam} \, K=2$ is the diameter of $K$.

\begin{theorem}
For the Vicsek set, for $p \ge 1$ the $L^p$ critical Besov exponent is given by
\[
\alpha_p =1+\frac{d_h-1}{p} >1
\]
and the space $KS^{\alpha_p,p}(K)$ is dense in $L^p(K,\mu)$.
\end{theorem}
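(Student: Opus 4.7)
The plan is to verify the hypotheses of Theorem \ref{equi poinc gene} at the exponent $\alpha = 1 + (d_h-1)/p$ with a local transition kernel built from discrete $p$-energies on the approximating graphs $V_n$; this will yield $\mathcal{P}(p,\alpha)$, identify $\alpha_p$ via the unnamed lemma preceding \eqref{p-variation}, and give the density of $KS^{\alpha_p,p}(K)$ in $L^p(K,\mu)$ in one stroke.

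\emph{Set-up.} For $f \in C(K)$ and $n \ge 0$ put $G_p^{(n)}(f) = \sum_{x \sim_n y} |f(x)-f(y)|^p$ (sum over edges of $\bar V_n$) and $\mathcal{E}_p^{(n)}(f) = 3^{n(p-1)} G_p^{(n)}(f)$; take $\rho_n$ to be the measure placing mass $3^{n(p-1)}$ on each edge of $\bar V_n$, so that $\nu_{n,p}(f,X) = \mathcal{E}_p^{(n)}(f)$ and the locality property is automatic. The scaling factor $\rho_p := 3^{p-1}$ is dictated by the tree-like self-similar structure: distinct subcopies $\psi_i(K), \psi_j(K)$ meet at most at one pinch point, so the $p$-harmonic extension of $f|_{V_0}$ to $V_1$ decomposes into independent star-graph minimization problems inside each subcopy. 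For $p>1$, strict convexity of $t \mapsto |t|^p$ forces the minimizer of $a \mapsto |a-b|^p + |a-c|^p$ to be $(b+c)/2$, and an explicit calculation then shows that $G_p^{(1)}(\hat f) = 3^{1-p} G_p^{(0)}(f|_{V_0})$, where $\hat f$ denotes the $p$-harmonic extension to $V_1$; in particular $\mathcal{E}_p^{(n)}$ is monotone along harmonic refinements. The class $\mathcal{F}_p = \{f\in C(K): \sup_n \mathcal{E}_p^{(n)}(f) < \infty\}$ then contains every piecewise $n$-harmonic function, and in particular many non-constant functions.

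\emph{Generalized Poincar\'e inequality.} I work with $n$-simplices $K_w$ in place of balls (since any ball of radius $\sim 3^{-n}$ is covered by a bounded number of them by doubling). On the base simplex $K$, the tree structure provides a unique geodesic between any two skeleton points; applying H\"older to the telescoping sum of $|\Delta f|$ along the $\sim 3^n d(x,y)$ level-$n$ edges of this geodesic yields the pointwise bound $|f(x)-f(y)|^p \lesssim d(x,y)^{p-1} \liminf_n \mathcal{E}_p^{(n)}(f)$, which integrates to $\int_K |f-f_K|^p\, d\mu \lesssim \liminf_n \nu_{n,p}(f,K)$. Transferring to $K_w$ via the self-similar identification $f|_{K_w} = g\circ\Psi_w^{-1}$, and using $\mu(K_w) = 5^{-n} = 3^{-n d_h}$ together with the fact that restricting $\mathcal{E}_p^{(m)}$ to $K_w$ picks up a factor $3^{-n(p-1)}$ relative to the level-$(m-n)$ energy of $g$, one obtains $\int_{K_w} |f-f_{K_w}|^p d\mu \lesssim 3^{-n(p-1+d_h)} \liminf_m \nu_{m,p}(f,K_w) = r^{p\alpha} \liminf_m \nu_{m,p}(f,K_w)$ with $r = 3^{-n}$. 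This is the inequality of Definition \ref{Poincloc} precisely because $p\alpha = p-1+d_h$.

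\emph{Controlled cutoffs and density.} For each $n$-simplex $K_w$, construct a tent function $\varphi_w \in \mathcal{F}_p$ equal to $1$ on $K_w$, vanishing outside the bounded union of $K_w$ with its finitely many neighbor $n$-simplices, with a $p$-harmonic interpolation in between. A self-similar computation yields $\limsup_m \mathcal{E}_p^{(m)}(\varphi_w) \lesssim 3^{n(p-1)} \sim \mu(K_w)/\varepsilon^{p\alpha}$ with $\varepsilon \sim 3^{-n}$, which together with a standard doubling covering produces the partition of unity required by Definition \ref{CCloc}. Theorem \ref{equi poinc gene} then yields $\mathcal{P}(p, 1+(d_h-1)/p)$, hence $\alpha_p = 1+(d_h-1)/p$. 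Density of $KS^{\alpha_p,p}(K)$ in $L^p(K,\mu)$ follows because piecewise $n$-harmonic functions (for arbitrary $n$) separate points of $K$ and uniformly approximate any $f \in C(K)$ by interpolation of its values on $V_n$ for large $n$; they all lie in $\mathcal{F}_p \subset KS^{\alpha_p,p}(K)$, so this space is $C(K)$-dense and hence $L^p(K,\mu)$-dense. The main obstacle is the careful bookkeeping of the self-similar factors $3^{p-1}$ and $5 = 3^{d_h}$ needed to verify that the Poincar\'e and cutoff estimates scale with exponent \emph{exactly} equal to $1+(d_h-1)/p$, which is what picks out the critical Besov exponent rather than the naive one.
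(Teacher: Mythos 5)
Your proposal is correct in outcome but takes a genuinely different route from the paper. The paper's proof of this theorem is essentially a citation: the value of $\alpha_p$ is imported from \cite{BV3} for $p=1$, \cite{MR4196573} for $1<p\le 2$ and \cite{BC2022} for $p>2$, and the density is the observation from \cite{BC2022} that the (dense) class of piecewise affine functions sits inside $KS^{\alpha_p,p}(K)$. You instead verify the hypotheses of Theorem \ref{equi poinc gene} directly with the graph transition kernel: a generalized $p$-Poincar\'e inequality on $n$-simplices, obtained by telescoping plus H\"older along the unique tree geodesics, whose scaling $5^{-n}\cdot 3^{-n(p-1)}=r^{p\alpha}$ with $r=3^{-n}$ is exactly what forces $\alpha=1+\frac{d_h-1}{p}$; and the controlled cutoff condition, for which the piecewise affine hat functions $u_v$ of the paper's proof of Theorem \ref{P Vicsek} give a genuine partition of unity with $\sup_m \mathcal{E}_p^m(u_v)\simeq 3^{n(p-1)}\simeq \mu(K_n^*(v))/\varepsilon^{p\alpha}$, so no normalization is needed. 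This buys a self-contained argument that simultaneously yields $\mathcal{P}(p,\alpha_p)$ (the content of the later Theorem \ref{P Vicsek}) and the identification of $\alpha_p$ via the lemma preceding \eqref{p-variation}, and it isolates the one computation that matters: the renormalization factor $3^{1-p}$ dictated by the tree structure. The cost is that a few steps you treat as routine genuinely need the care the paper only alludes to: Theorem \ref{equi poinc gene} must be rerun with simplices in place of balls (the remark following Definition \ref{CCloc} sanctions this without carrying it out); the harmonic-extension computation takes place on the three-edge paths into which a level-$0$ edge subdivides (plus dangling branches contributing zero), not the two-edge minimization $a\mapsto |a-b|^p+|a-c|^p$ you write, although the factor $3^{1-p}$ is unchanged; and for $p=1$ the ``$p$-harmonic interpolation'' should be read as piecewise affine interpolation so that the cutoffs remain continuous and the argument covers $\alpha_1=d_h$. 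None of these affects the correctness of the strategy.
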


\begin{proof}
The case $p=1$ was first treated in \cite[Theorem 5.1]{BV3} where it is actually proved that for any nested fractal $\alpha_1=d_h$ and that the corresponding Korevaar-Schoen-Sobolev space is always dense in $L^1$. The case $1<p \le 2$ was then treated for the first time in \cite[Theorem 3.10]{MR4196573} where it is proved that in that case $\alpha_p =1+\frac{d_h-1}{p}$ and that $KS^{\alpha_p,p}(K)$ contains non-constant functions. The case $p>2$  follows from \cite[Corollary 4.5]{BC2022}.  Finally, the fact that $KS^{\alpha_p,p}(K)$ is dense in $L^p(K,\mu)$ for every $p>1$ follows from Section 2.5 in \cite{BC2022} where it is observed that the set of piecewise affine functions which is dense in $L^p(K,\mu)$, is a subset of $KS^{\alpha_p,p}(K)$.
\end{proof}

Since $\alpha_p > \frac{d_h}{p}$ when $p >1$ it follows from Theorem \ref{T:Morrey_local} that any function $f \in KS^{\alpha_p,p}(K)$, $p>1$, has a version which is $\left(1-\frac{1}{p}\right)$-H\"older continuous. We will therefore see $KS^{\alpha_p,p}(K)$ as a subset of $C(K)$.

For $1 < p <+\infty$, the discrete $p$-energy on $V_m$ of  a function  $f \in C(K)$ is defined as
\[
\mathcal{E}_p^m(f):=\frac{1}{2} 3^{(p-1)m}  \sum_{x,y \in V_m, x \sim y} |f(x)-f(y)|^p.
\]
As a consequence of the basic  inequalities
\[
|x+y+z|^p\le 3^{p-1}(|x|^p+|y|^p+|z|^p), 
\]
and of  the tree structure of $ V_m$ we always have for $p >1$
\begin{equation}\label{eq:increaseEn}
\mathcal{E}_{p}^m(f)\le \mathcal{E}_{p}^n(f), \quad \forall \, m,n\in \mathbb N,m\le n.
\end{equation}
Moreover, from this fact we deduce that   
\begin{equation}\label{eq:lim}
\lim_{n\to \infty}\mathcal{E}_{p}^n(f)=\sup_{n\ge 0} \mathcal{E}_{p}^n(f)=\limsup_{n\to \infty}\mathcal{E}_{p}^n(f)=\liminf_{n\to \infty}\mathcal{E}_{p}^n(f),
\end{equation}
where the above  quantities are in $\mathbb{R}_{\ge 0} \cup \{+\infty \}$.
\begin{defn}
Let $p>1$. For $f \in C(K)$, we define the (possibly infinite) $p$-energy of $f$ by
\[
\mathcal E_{p}(f):=\lim_{m\to \infty} \mathcal E_{p}^m(f)
\]
and let
\[
\mathcal{F}_p=\left\{ f \in C(K): \sup_{m \ge 0} \mathcal{E}_p^m(f) <+\infty \right\}.
\]
We consider on $\mathcal F_p$ the seminorm
\[
\| f \|_{\mathcal{F}_p}= \sup_{m \ge 0} \mathcal{E}_p^m(f)^{1/p}, \, p>1.
\]
\end{defn}

We have then the following characterization of the Korevaar-Schoen-Sobolev spaces in terms of the discrete $p$-energies:

\begin{theorem}\cite[Theorem 2.9]{BC2022} \label{carac Sobolev}
Let $p>1 $. For $f \in C(K)$ the following are equivalent:

\begin{enumerate}
\item $f \in KS^{\alpha_p,p}(K)$;
\item $f \in \mathcal F_p$;
\end{enumerate}
Moreover,  one has
\[
 \sup_{r  > 0} E_{p,\alpha_p} (f,r)^{1/p} \simeq \| f \|_{\mathcal F_p}. 
\]
\end{theorem}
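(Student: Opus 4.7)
The goal is a two-sided comparison between the continuous Besov quantity $\sup_{r>0} E_{p,\alpha_p}(f,r)$ and the discrete energy $\sup_{m\ge 0}\mathcal{E}_p^m(f)$. I will prove each direction separately by matching scales: fix $m$ and $r$ with $3^{-m-1}\le r < 3^{-m}$, compare $E_{p,\alpha_p}(f,r)$ with $\mathcal{E}_p^m(f)$ up to constants independent of $m$, and then use the monotonicity of $m\mapsto \mathcal{E}_p^m(f)$ from \eqref{eq:lim} together with the monotone behavior of $r\mapsto E_{p,\alpha_p}(f,r)$ on each dyadic scale. The correct exponent $\alpha_p = 1+(d_h-1)/p$, equivalently $p\alpha_p + d_h = p-1+2 d_h$, is dictated by matching the rescaling factor $r^{-p\alpha_p-d_h}\asymp 3^{m(p-1)}\cdot 3^{2 d_h m}$ against the discrete weight $3^{(p-1)m}$ multiplied by the measure of two neighboring $m$-simplices.

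\textbf{Step 1: $\mathcal{F}_p \hookrightarrow KS^{\alpha_p,p}(K)$.} Given $f\in\mathcal{F}_p$, use Ahlfors regularity to rewrite
\[
E_{p,\alpha_p}(f,r) \asymp r^{-p\alpha_p - d_h}\int_K\int_{B(x,r)}|f(x)-f(y)|^p\, d\mu(y)\, d\mu(x).
\]
For $x,y$ with $d(x,y)\le r\asymp 3^{-m}$, both points lie in a bounded number of adjacent $m$-simplices $K_w, K_{w'}$. Because the Vicsek set is finitely ramified and tree-like, any two such adjacent simplices are linked by a chain of length at most a universal constant $\ell_0$ of $m$-edges $v_i\sim v_{i+1}\in V_m$. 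Telescoping,
\[
|f(v_0)-f(v_{\ell_0})|^p\le \ell_0^{p-1}\sum_{i}|f(v_i)-f(v_{i+1})|^p,
\]
and comparing $f(x)$ to $f(v_0)$ (and $f(y)$ to $f(v_{\ell_0})$) through H\"older continuity from Theorem \ref{T:Morrey_local} controls the oscillation of $f$ within a single simplex by the local contribution of $\mathcal{E}_p^m(f)$. Summing over all $m$-simplex pairs and absorbing the factors $\mu(K_w)\asymp 3^{-d_h m}$ into the rescaling yields $E_{p,\alpha_p}(f,r) \le C \mathcal{E}_p^m(f)$.

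\textbf{Step 2: $KS^{\alpha_p,p}(K)\hookrightarrow \mathcal F_p$.} Given $f\in KS^{\alpha_p,p}(K)$, for each edge $\{v,v'\}$ in $V_m$ exploit the fact that $v$ and $v'$ are each Lebesgue points of $f$ (even more, H\"older continuity points) with opposite $m$-simplices $K_w\ni v$, $K_{w'}\ni v'$ of diameter $3^{-m}$. A standard averaging/telescoping argument on dyadic subscales gives
\[
|f(v)-f(v')|^p \;\le\; C\,3^{m(p\alpha_p - d_h)}\int_{K_w}\int_{K_{w'}}|f(x)-f(y)|^p\,d\mu(x)\,d\mu(y)
\]
(with the help of Jensen's inequality along $v$ and $v'$). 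Multiplying by $3^{(p-1)m}$, summing over the $\asymp 5^m = 3^{d_h m}$ edges at level $m$, and observing that the resulting double integral is dominated pointwise (via the bounded overlap of $\{B(x,c\cdot 3^{-m})\}$) by the integrand of $E_{p,\alpha_p}(f,3^{-m})$ yields $\mathcal{E}_p^m(f)\le C\, E_{p,\alpha_p}(f,3^{-m})^{1/p}\cdot\text{(constant)}^{\,p}$. Taking $m\to\infty$ and using \eqref{eq:lim} finishes the direction.

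\textbf{Main obstacle.} The delicate point is that the telescoping chain constants $\ell_0$ in Step 1 and the pointwise reconstruction in Step 2 must both be uniform in $m$. This uniformity is exactly what the finite ramification of the Vicsek set provides: every two adjacent $m$-simplices share a single junction vertex, so the combinatorial complexity of the chain connecting two nearby points does not grow with $m$. This is why the argument succeeds on the Vicsek (and in essentially the same way on the Sierpi\'nski gasket and other nested fractals), but becomes far subtler on infinitely ramified spaces such as the Sierpi\'nski carpet. A secondary subtle point is justifying the pointwise interpretation of $f(v)$ at vertices from an $L^p$ class, which is handled by Theorem \ref{T:Morrey_local} since $p\alpha_p > d_h$ when $p>1$, yielding H\"older representatives.
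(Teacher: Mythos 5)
First, note that the paper does not prove this statement: it is quoted verbatim from \cite[Theorem~2.9]{BC2022}, so your proposal has to be judged on its own merits. Your scaling arithmetic is correct ($p\alpha_p+d_h=p-1+2d_h$, so $r^{-p\alpha_p-d_h}\asymp 3^{(p-1)m}\,\mu(K_w)^{-1}\mu(K_{w'})^{-1}$ at $r\asymp 3^{-m}$), and the overall strategy of matching dyadic scales is the right one. But there are two genuine gaps. The first is a circularity in Step~1: to control $|f(x)-f(v_0)|$ you invoke ``H\"older continuity from Theorem~\ref{T:Morrey_local}'', but that theorem applies to $f\in\mathcal{B}^{\alpha_p,p}(K)$ and bounds the H\"older seminorm by $\sup_{r}E_{p,\alpha_p}(f,r)^{1/p}$ --- exactly the quantity you are trying to prove is finite. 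In this direction you only know $f\in\mathcal F_p$, so the oscillation of $f$ inside an $m$-simplex must be extracted from the \emph{discrete} energies: on the Vicsek set, two vertices $u,v\in V_n$ lying in a common $m$-simplex are joined by a geodesic path in $\bar V_n$ of at most $C3^{n-m}$ edges, all contained in that simplex, whence by H\"older $|f(u)-f(v)|^p\le C3^{-(p-1)m}\cdot 3^{(p-1)n}\sum_{e\subset K_w}|\delta_e f|^p$; this is the analogue of Lemma~\ref{esti p} and is what replaces Theorem~\ref{T:Morrey_local} here.

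The second, more structural, gap is that both of your steps are run with \emph{global} oscillation/Morrey bounds, and these lose a factor of the number of cells. Concretely: the global H\"older bound $|f(x)-f(y)|^p\le C\,3^{-(p-1)m}\|f\|_{\mathcal F_p}^p$ for $d(x,y)\le 3^{-m}$, inserted into $E_{p,\alpha_p}(f,3^{-m})$, produces $3^{d_hm}\|f\|_{\mathcal F_p}^p\to\infty$; similarly, in Step~2 bounding each of the $\asymp 5^m$ vertex terms $|f(v)-f_{K_w}|^p$ by a quantity involving the full energy diverges after summation. What makes the argument close is that the local oscillation estimate in each direction is controlled by the energy (discrete, resp.\ continuous) \emph{restricted to a bounded-overlap neighborhood of the given simplex}, so that summing over the $5^m$ cells reassembles the global energy rather than multiplying it by $5^m$. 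Your ``main obstacle'' paragraph locates the difficulty in the uniformity of the chain length $\ell_0$, which is automatic for adjacent simplices; the real issue is this localization. Finally, the displayed inequality in Step~2 has the wrong power: the normalization of the double average over $K_w\times K_{w'}$ is $\mu(K_w)^{-1}\mu(K_{w'})^{-1}=3^{2d_hm}$, not $3^{m(p\alpha_p-d_h)}=3^{m(p-1)}$ (these agree only when $p=1+2d_h$), and with your exponent the subsequent bookkeeping fails for large $p$; moreover the terms $|f(v)-f_{K_w}|$ require a multi-scale telescoping over $k\ge m$ (summable geometrically because $p>1$), not a single-scale Jensen step. A cleaner route, which is the one actually used in \cite{BC2022} and mirrored in the proof of Theorem~\ref{P Vicsek} above, avoids pointwise values altogether by estimating $\mathcal E_p^n(\Phi_n)$ for the piecewise affine interpolation $\Phi_n$ of the cell averages $f_n(v)=\fint_{K_{n+1}^*(v)}f\,d\mu$ and passing to the limit.
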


\begin{remark}
In \cite{BC2022} a further characterization of $KS^{\alpha_p,p}(K)$ is given in terms of   weak gradients.
\end{remark}

\begin{remark}\label{comment N}
As in Remark \ref{Newton KS}, one might wonder how the theory of Newtonian Sobolev spaces introduced by Shanmugalingam in \cite{Shanmu00} applies in that setting and if it compares to the Korevaar-Schoen-Sobolev spaces. However, one can see that for every $p \ge 1$, the Newtonian space $N^{1,p}(K)$ is equal to $L^p(K,\mu)$. Indeed, it is well known that there exists a non-negative Borel function $[0,1]\to \mathbb{R}$ whose integral is infinite on any interval of positive length. As a consequence, there exists a non-negative function $\rho$ on the skeleton $\mathcal{S}$ which is measurable with respect to the $\sigma$-algebra generated by the set of geodesic paths included in  $\mathcal{S}$ and such  that for every geodesic path $\gamma$ connecting two points $x\neq y \in \mathcal{S}$, $\int_\gamma \rho =+\infty$. The function $\rho$ can be extended to be zero on $K\setminus \mathcal {S}$. This function $\rho$ is then non-negative and in $L^p(K,\mu)$ for every $p \ge 1$. According to Lemma 2.1 in \cite{Shanmu00} one has therefore $N^{1,p}(K)=L^p(K,\mu)$. Intuitively, the issue is that even though the space is geodesic, the Hausdorff  measure \textit{does not see} the rectifiable paths because they all are supported on a set of measure zero.
\end{remark}

\begin{theorem}\label{P Vicsek}
On the Vicsek set the property $\mathcal{P}(p,\alpha_p)$ holds for every $p \ge 1$. Therefore,  if $p>1$,  $\mathrm{Var}_p (f)^p \simeq \mathcal{E}_p(f)$.
\end{theorem}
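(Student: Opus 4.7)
The plan is to invoke Theorem~\ref{equi poinc gene} with a local transition kernel built from the graph approximations $V_n$. The case $p=1$ is covered by \cite[Theorem 5.1]{BV3}, so I focus on $p>1$. I take
\[
d\rho_n(x,y) := \tfrac{1}{2}\, 3^{(p-1)n}\sum_{\substack{u,v\in V_n\\ u\sim v}}\delta_u(x)\,\delta_v(y),
\]
so that $\nu_{n,p}(f,X)=\mathcal{E}_p^n(f)$ and the class $\mathcal{F}_p$ appearing in Theorem~\ref{equi poinc gene} coincides with the one defined above. Locality of $\rho_n$ is immediate: the edges of $V_n$ have length $3^{-n}$, so for closed sets $A,B\subset K$ with $d(A,B)>0$ one has $\int_A\int_B d\rho_n=0$ as soon as $3^{-n}<d(A,B)$.

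Next, I would verify the generalized $p$-Poincar\'e inequality \eqref{p-Poincare generalized} with $\alpha=\alpha_p=1+(d_h-1)/p$, working with the simplicial basis rather than balls (as permitted by the remark after Definition~\ref{CCloc}). The measure scaling $\mu\circ\Psi_w=5^{-n}\mu$ combined with the self-similar energy scaling $\mathcal{E}_p(f\circ\Psi_w)=3^{-n(p-1)}\mathcal{E}_p(f;K_w)$ on an $n$-simplex, together with the elementary identity $5^{-n}\cdot 3^{-n(p-1)}=3^{-np\alpha_p}$, reduces the local Poincar\'e on each $K_w$ to the single global inequality
\[
\int_K |f-f_K|^p\, d\mu \le C\,\mathcal{E}_p(f),\qquad f\in\mathcal{F}_p.
\]
This global estimate follows from the dendrite structure of $K$: any two vertices $u,v\in V_m$ are joined by a unique simple path in the cable system $\bar{V}_m$ of at most $C\,3^m$ edges, so H\"older's inequality along the path yields $\mathrm{osc}_K(f)^p\le C\,\mathcal{E}_p(f)$ after passing to $m\to\infty$ using continuity of $f\in\mathcal{F}_p$. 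A covering of a generic ball by a bounded number of comparable simplices then transfers the inequality to balls.

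For the controlled cutoff condition, I invoke the simplified formulation from the remark after Definition~\ref{CCloc}: it suffices to exhibit, for each $n$-simplex $K_w$, a function $\phi_w\in\mathcal{F}_p$ supported in a simplicial neighborhood of $K_w$, equal to $1$ on $K_w$, and satisfying $\sup_m\mathcal{E}_p^m(\phi_w)\le C\,3^{n(p-1)}=C\,\mu(K_w)/(3^{-n})^{p\alpha_p}$. The finitely ramified, dendrite geometry of $K$ makes this explicit: $K_w$ is attached to the rest of $K$ through at most five vertices of $V_n$, and I would set $\phi_w=1$ on $K_w$, $\phi_w=0$ outside the union of $K_w$ with its neighboring $n$-simplices, and on each neighboring simplex $K_{w'}$ define $\phi_w$ to be the piecewise-linear interpolant along the dendrite $\bar{V}_m$ decaying from $1$ at the shared boundary vertex to $0$ at the other boundary vertices of $K_{w'}$. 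On such a branch of length $3^{m-n}$ at level $m$, the linear profile contributes $\tfrac12 3^{(p-1)m}\cdot 3^{m-n}\cdot 3^{(n-m)p}=\tfrac12\,3^{n(p-1)}$ to $\mathcal{E}_p^m(\phi_w)$, independent of $m$, so the bounded-degree dendrite structure yields the required scale-invariant estimate.

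The output of Theorem~\ref{equi poinc gene} is then $\mathcal{P}(p,\alpha_p)$. Combined with Theorem~\ref{carac Sobolev}, which already identifies $\sup_{r>0}E_{p,\alpha_p}(f,r)\simeq \mathcal{E}_p(f)$, and with the definition $\mathrm{Var}_p(f)^p=\liminf_{r\to 0}E_{p,\alpha_p}(f,r)$, the inequality $\sup_{r>0}E_{p,\alpha_p}(f,r)\le C\liminf_{r\to 0}E_{p,\alpha_p}(f,r)$ provided by $\mathcal{P}(p,\alpha_p)$ immediately yields the equivalence $\mathrm{Var}_p(f)^p\simeq \mathcal{E}_p(f)$ for $p>1$. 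The main obstacle is not conceptual but computational: verifying the scale-invariant energy bound $\sup_m\mathcal{E}_p^m(\phi_w)\simeq 3^{n(p-1)}$ requires careful bookkeeping across all levels $m\ge n$ and depends crucially on $K$ being a dendrite of uniformly bounded local degree.
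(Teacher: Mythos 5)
Your strategy is sound and reaches the right conclusion, but it takes a genuinely different route from the paper. The paper does \emph{not} invoke Theorem \ref{equi poinc gene} here: it argues directly, replacing $f$ by the $n$-piecewise affine interpolant $\Phi_n$ of the local averages $f_n(v)=\fint_{K_{n+1}^*(v)}f\,d\mu$, using the tree monotonicity $\mathcal{E}_p(\Phi_n)=\mathcal{E}_p^n(\Phi_n)$ together with the already-known equivalence $\sup_{r>0}E_{p,\alpha_p}(\cdot,r)\simeq\|\cdot\|_{\mathcal F_p}$ of Theorem \ref{carac Sobolev} to get $\sup_r E_{p,\alpha_p}(\Phi_n,r)\le C\,E_{p,\alpha_p}(f,3^{-n+1})$, and then passing to the $\liminf$ after checking $\Phi_n\to f$ in $L^p$. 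Your route instead verifies the two hypotheses of the general Theorem \ref{equi poinc gene} for the discrete graph kernel (generalized Poincar\'e by self-similar rescaling of a global oscillation bound, controlled cutoffs by explicit piecewise-linear bumps); your scaling identities ($5^{-n}3^{-n(p-1)}=3^{-np\alpha_p}$, the scale-invariance of $\mathcal{E}_p^m(\phi_w)$) check out, and in effect you reprove the hard direction of Theorem \ref{carac Sobolev} rather than citing it. What your approach buys is a template that does not presuppose the Besov/discrete-energy identification and fits the general framework of Section 5.2; what the paper's approach buys is brevity, since Theorem \ref{carac Sobolev} already encodes the ``$\sup$ over all scales'' control and the dendrite structure enters only through the monotonicity \eqref{eq:increaseEn}. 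Two points in your sketch need genuine care before this closes: (i) Theorem \ref{equi poinc gene} is stated for balls, and when you transfer the Poincar\'e inequality from simplices to balls you cannot chain averages through overlaps in the usual way because adjacent $n$-simplices of the Vicsek set meet in a single point of measure zero --- you must chain through the shared vertex using continuity of $f$ and the oscillation bound $\mathrm{osc}_{K_w}(f)^p\le C\,3^{-(p-1)n}\mathcal{E}_p(f;K_w)$, which fortunately has exactly the right scaling; (ii) your bumps $\phi_w$ are not a partition of unity, so you must normalize $\varphi_w=\phi_w/\sum_{w'}\phi_{w'}$ and check that the discrete $p$-energy bound survives the quotient (it does, by a Leibniz-type estimate and bounded overlap, since $\sum_{w'}\phi_{w'}\ge 1$). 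With those details supplied the argument is complete, and the final equivalence $\mathrm{Var}_p(f)^p\simeq\mathcal{E}_p(f)$ follows exactly as you say.
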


\begin{proof}
If $p=1$ the validity of $\mathcal{P}(1,\alpha_1)$ is established in \cite[Theorem 4.9]{BV3}, so we assume that $p>1$. We adapt  slightly  the proof of Theorem 5.4 in \cite{BC2022} and use the notion of piecewise affine function on the Vicsek set.  A continuous function $\Phi:K \to \mathbb{R}$ is called $n$-piecewise affine, if there exists $n \ge 0$ such that $\Phi$ is piecewise affine on the cable system $\bar{V}_n$ (i.e. linear between the vertices of $\bar{V}_n$)  and constant on any connected component of $\bar{V}_m \setminus \bar{V}_n$ for every $m >n$.  If $\Phi:K \to \mathbb{R}$ is an $n$-piecewise affine function, then, for $p >1$, 
\ $\mathcal E_{p}^0 (\Phi) \le \cdots \le \mathcal E_{p}^n (\Phi)=\mathcal E_{p}^m(\Phi)$, where $m \ge n$, and therefore $\mathcal E_{p} (\Phi)=\mathcal E_{p}^n (\Phi)$. 
Let $f \in KS^{\alpha_p,p}(K)$. We define a sequence of piecewise affine functions $\{\Phi_n\}_{n\ge 1}$ associated with $f$ on the cable systems $\{\bar{V}_n\}_{n\ge 1}$ as follows.  For any fixed $n\ge 1$, we first define a function $f_n$ on $V_n$ by
\[
f_{n}(v):=\frac{1}{\mu(K_{n+1}^*(v))} \int_{K_{n+1}^*(v) } f d\mu,\quad v \in V_n,
\]
where $K_{n+1}^*(v)$ is the union of the $(n+1)$-simplices containing $v$. Then let $\Phi_n$ be the unique piecewise affine function that coincides with $f_n$ on $V_n$. It is easy to see that
\[
\Phi_n(x)=\sum_{v \in V_n} \left(\frac{1}{\mu(K_{n+1}^*(v))} \int_{K_{n+1}^*(v) } f d\mu \right) \, u_v(x)
=\sum_{v \in V_n} f_{n}(v) \, u_v(x),
\]
where $u_v$ is the unique $n$-piecewise affine function that takes the value 1 at $v$ and zero on $V_n \setminus \{ v\}$. Note that we have $0\le u_v\le 1$, $\supp u_v \subset K_n^*(v)$, where  $K_n^*(v)$ is the union of the $n$-simplices containing $v$, and 
\[
\sum_{v \in V_n} u_v(x)=1, \quad \forall \, x \in K.
\]
We observe that the covering $\{K_n^*(v)\}_{v\in V_n}$ has the bounded overlap property. Also, for any $x\in K_n^*(v)$,  $K_{n+1}^*(v)\subset B(x, 3^{-n+1})$. We note that $\Phi_n$ is an analogue for the Vicsek set of the sequence $f_\eps$ that was considered in the proof of Theorem \ref{equi poinc}. By H\"older's inequality one has:
\begin{equation}\label{eq:g-Lp}
\begin{split}
   \|f -\Phi_n \|_{L^p(K,\mu)}^p 
   &\le C\sum_{v\in V_n} \int_{K_n^*(v)}|f(x)-f_n(v)|^p (u_v(x))^p d\mu(x)
   \\ &\le C\sum_{v\in V_n} \int_{K_n^*(v)}\fint_{K_{n+1}^*(v)} |f(x)-f(y)|^p d\mu(y) d\mu(x)
   \\ &\le C\int_{K}\fint_{B(x, 3^{-n+1})} |f(x)-f(y)|^p d\mu(y) d\mu(x).
\end{split}
\end{equation}
On the other hand from Theorem \ref{carac Sobolev}, we have for every $r >0$
\[
\frac{1}{r^{p\alpha_p}}\int_{K}\fint_{B(z, r)}|\Phi_n(z)-\Phi_n(w)|^pd\mu(w)d\mu(z) \le C \mathcal E_{p}(\Phi_n)=C \mathcal E_p^n(\Phi_n)
\]
and $\mathcal E_p^n(\Phi_n)$ can be controlled as follows. Observe that for any $x\in V_n$, one has $\Phi_n(x)=f_n(x)$ by definition. Hence 
\begin{align*}
    \mathcal E_p^n(\Phi_n)=\frac123^{(p-1)n}\sum_{x,y\in V_n,x\sim y}|f_n(x)-f_n(y)|^p.
\end{align*}
For any neighboring vertices $x,y \in V_n$, H\"older's inequality yields
\begin{align*}
    |f_n(x)-f_n(y)|
    & \le \frac{1}{\mu(K_{n+1}^*(x))\mu(K_{n+1}^*(y))}\int_{K_{n+1}^*(x)}\int_{K_{n+1}^*(y)}|f(z)-f(w)|d\mu(w)d\mu(z)
    \\ &\le C\left( 5^{2n} \int_{K_{n+1}^*(x)}\int_{K_{n+1}^*(y)}|f(z)-f(w)|^pd\mu(w)d\mu(z)\right)^{\frac1p}.
\end{align*}
Thanks to the fact that $x,y\in V_n$ are adjacent $K_{n+1}^*(y) \subset B(z,  3^{-n+1})$ for any $z\in K_{n+1}^*(x)$. Therefore we get
\[
  |f_n(x)-f_n(y)|^p\le C 5^{2n} \int_{K_{n+1}^*(x)}\int_{B(z,  3^{-n+1})}|f(z)-f(w)|^pd\mu(w)d\mu(z).
\]
By the bounded overlap property of $\{K_{n+1}^*(v)\}_{v\in V_n}$, we then have
\begin{align*}
    \mathcal E_p^n(\Phi_n)
    & \le C 3^{(p-1)n}  5^{2n} \sum_{x,y\in V_n, x\sim y} \int_{K_{n+1}^*(x)}\int_{K_{n+1}^*(y)}|f(z)-f(w)|^pd\mu(w)d\mu(z)
    \\ &\le  C 3^{(p-1)n} 5^{2n} \int_{K}\int_{B(z,  3^{-n+1})}|f(z)-f(w)|^pd\mu(w)d\mu(z).
\end{align*}
Set $r_n= 3^{-n+1}$. We can rewrite the above inequality as
\[
\mathcal E_p^n(\Phi_n) \le  \frac{C}{r_n^{p\alpha_p}}\int_{K}\fint_{B(z, r_n)}|f(z)-f(w)|^pd\mu(w)d\mu(z).
\]
Consequently, we have for every $r >0$
\[
\frac{1}{r^{p\alpha_p}}\int_{K}\fint_{B(z, r)}|\Phi_n(z)-\Phi_n(w)|^pd\mu(w)d\mu(z) \le \frac{C}{r_n^{p\alpha_p}}\int_{K}\fint_{B(z, r_n)}|f(z)-f(w)|^pd\mu(w)d\mu(z).
\]
Let now $0< \eps <1$ and let  $n_\eps$ be the unique integer $\ge 1$ such that $\frac{1}{3^{n_\eps-1}} < \eps \le \frac{1}{3^{n_\eps-2}}$. We have for every $r >0$
\begin{align}\label{lim almost}
\frac{1}{r^{p\alpha_p}}\int_{K}\fint_{B(z, r)}|\Phi_{n_\eps}(z)-\Phi_{n_\eps}(w)|^pd\mu(w)d\mu(z) \le \frac{C}{\eps^{p\alpha_p}}\int_{K}\fint_{B(z, \eps)}|f(z)-f(w)|^pd\mu(w)d\mu(z).
\end{align}
However, \eqref{eq:g-Lp}  also gives that 
\[
\|f -\Phi_{n_\eps} \|_{L^p(K,\mu)}^p\le C \eps^{p\alpha_p}  \frac{1}{\eps^{p\alpha_p}}\int_{K}\fint_{B(z, \eps)}|f(z)-f(w)|^pd\mu(w)d\mu(z)
\]
which implies that as $\eps\to 0$, $\Phi_{n_\eps} \to f$  in $L^p(K,\mu)$. By taking $\liminf_{\eps \to 0}$ in \eqref{lim almost} we obtain then that for every $r>0$
\[
\frac{1}{r^{p\alpha_p}}\int_{K}\fint_{B(z, r)}|f(z)-f(w)|^pd\mu(w)d\mu(z) \le C \mathrm{Var}_p (f)^p.
\]
\end{proof}

\begin{remark}
In view of the property $\mathcal{P}(p,\alpha_p)$ on the Vicsek set, one might wonder (as in Remark \ref{existence limit poincare}) whether the limit
\[
\lim_{\eps \to 0} \frac{1}{\eps^{p\alpha_p}}\int_{K}\fint_{B(z, \eps)}|f(z)-f(w)|^pd\mu(w)d\mu(z)
\]
actually exists or not. This problem was studied in \cite{AB2021} in the case $p=1$ and it was proved that the limit  does not exist in general due to the small-scale oscillations appearing in the geometry of the Vicsek set.
\end{remark}

As a  consequence of the property $\mathcal{P} (p,\alpha_p)$ and Theorem \ref{sobo}, we therefore get the following Nash inequalities on the Vicsek set.

\begin{cor}\label{Nash Vicsek} 
For $p>1$ the following Nash inequality holds for every $f \in KS^{\alpha_p,p}(K)$,
\[
\| f \|_{L^p(K,\mu)} \le C \left(\|f\|_{L^p(K,\mu)}+ \mathrm{Var}_p (f)\right)^{\theta} \| f \|^{1-\theta}_{L^1(K,\mu)}
\]
with $\theta=\frac{(p-1)d_h}{p-1+pd_h}$, while for $p=1$, there exists a constant $C>0$ such that for every $f \in KS^{d_h,1}(K)$
\begin{align*}
\| f \|_{L^\infty(K,\mu)} \le C \left(\|f\|_{L^1(K,\mu)}+ \mathrm{Var}_1 (f)\right).
\end{align*}
\end{cor}

\begin{remark}
For $p=2$, Nash inequalities  have been studied in connection with heat kernel estimates in the more general context of p.c.f. fractals. Corollary \ref{Nash Vicsek}  recovers the special case of  \cite[Theorem 8.3]{MR1668115} for the Vicsek set since  $\mathrm{Var}_2 (f)^2 \simeq \mathcal{E}_2(f)$ which is the Dirichlet form on the Vicsek set. 
\end{remark}

\subsection{Sierpi\'nski  gasket}\label{SG section}

Let $V_0=\{q_1, q_2, q_3\}$ be the set of vertices of an equilateral triangle of side 1 in $\mathbb C$. Define 
$$\psi_i(z)=\frac{z-q_i}{2}+q_i$$ for $i=1,2,3$. Then the Sierpi\'nski  gasket $K$ is the unique non-empty compact subset in $\mathbb C$ such that 
\[
K=\bigcup_{i=1}^3 \psi_i(K).
\]

The Hausdorff dimension of $K$ with respect to the geodesic metric $d$ is given by $d_h=\frac{\log 3}{\log 2}$. The (normalized) Hausdorff measure on $K$ is given by the unique Borel measure $\mu$ on $K$ such that for every $i_1, \cdots, i_n \in \{ 1,2,3 \} $,
\[
\mu \left(  \psi_{i_1} \circ \cdots \circ \psi_{i_n}  (K)\right)=3^{-n}.
\]

\begin{figure}[htb]\centering
	\includegraphics[trim={60 10 180 60},height=0.25\textwidth]{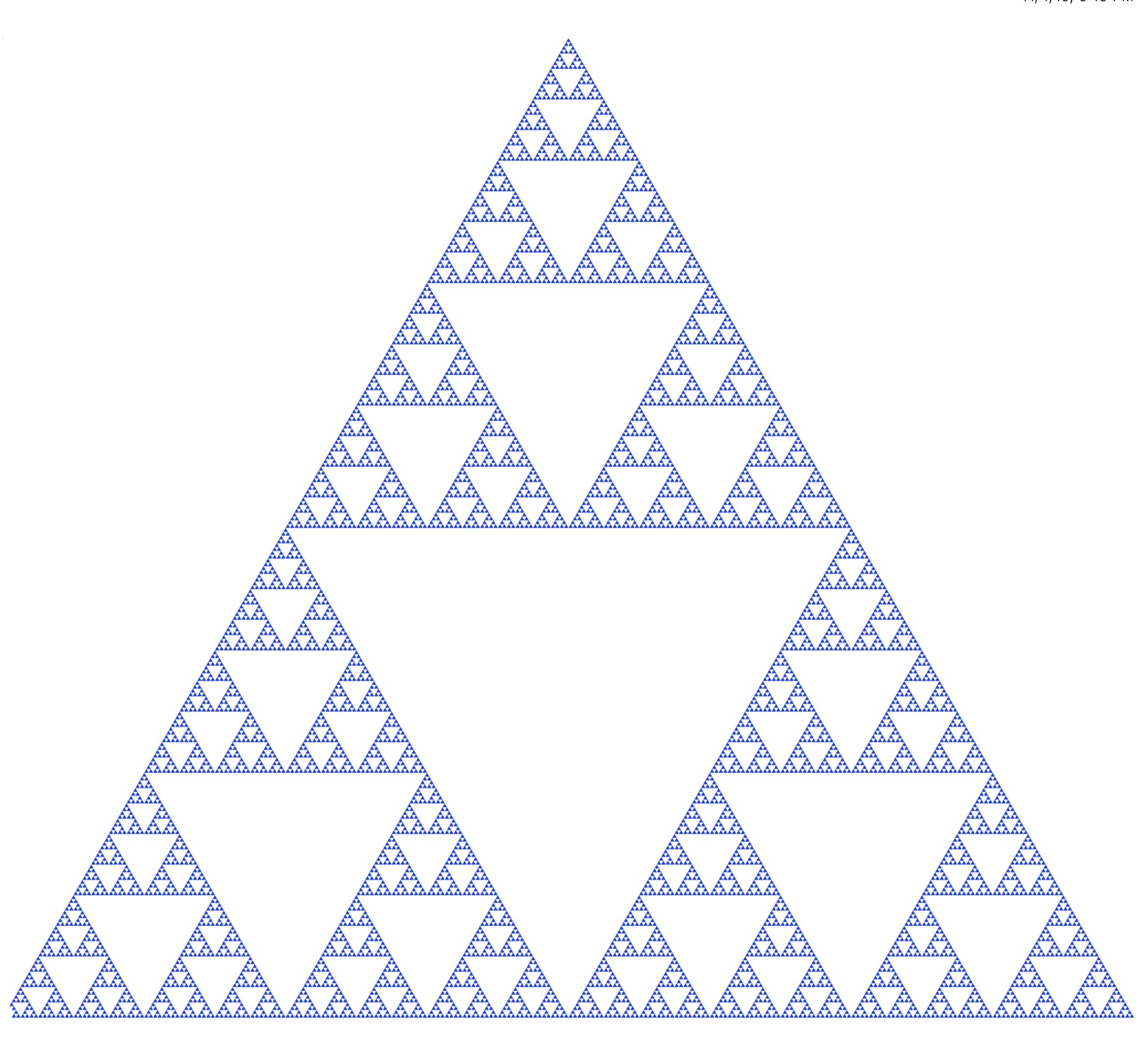}
	\caption{Sierpi\'nski gasket.} \label{fig-SG}
\end{figure}

This measure $\mu$ is $d_h$-Ahlfors regular, i.e. there exist constants $c,C>0$ such that for every $x \in K$ and $r \in (0, \mathrm{diam} (K) ]$,
\begin{equation}
\label{Ahlfors}
c r^{d_h} \le \mu (B(x,r)) \le C r^{d_h}.
\end{equation}

We define a sequence of sets $\{V_m\}_{m\ge 0}$ inductively by
\[
V_{m+1}=\bigcup_{i=1}^3 \psi_i(V_m).
\]

Let $p>1$. The study of $p$-energies on the Sierpi\'nski  gasket was undertaken in \cite{HPS} where the authors introduced a non-linear renormalization problem from which it is possible to compute the $L^p$ critical Besov exponents. After \cite{HPS} consider the function
\[
F_p (a)=|a_1-a_2|^p+|a_2-a_3|^p+|a_3-a_1|^p, \quad a=(a_1,a_2,a_3) \in \mathbb R^3.
\]
The renormalization problem for the function $F_p$ is to find a non-negative continuous convex  function $A_p$ on  $\mathbb R^3$ and a number $0<r_p<1$ such that:
\begin{enumerate}
\item $A_p(a) \simeq F_p(a)$;
\item $\min_{b \in \mathbb R^3} \left(A_p(a_1,b_2,b_3)+A_p(b_1,a_2,b_3) +A_p(b_1,b_2,a_3)\right)=r_p A_p(a)$.
\end{enumerate}
It was shown in \cite{HPS} that  a solution of the renormalization problem exists. While the uniqueness of the function $A_p$ is not  known, remarkably the number $r_p$ is. For $p=2$ the value of $r_p$ is $\frac{3}{5}$. For other values of $p$, the value of $r_p$ is unknown but was proved in \cite{HPS} to satisfy
\[
2^{1-p} \le r_p  \le 2^{p-1} \left( 1+\sqrt{1+2^{3-1/(p-1)}} \right)^{1-p} < 3  \cdot 2^{-p}.
\]
\begin{theorem}
For the Sierpi\'nski  gasket, if $p > 1$ the $L^p$ critical Besov exponent is given by
\[
\alpha_p =\frac{1}{p}\left(  \frac{\log 3}{\log 2}-\frac{\log r_p}{\log 2}\right)  \in \left( \frac{d_h}{p}, 1+\frac{d_h-1}{p}  \right]
\]
and for $p=1$,
\[
\alpha_1=d_h=\frac{\log 3}{\log 2}.
\] 
Moreover, for every $p \ge 1$, the Korevaar-Schoen-Sobolev space $KS^{\alpha_p,p}(K)$ is dense in $L^p(K,\mu)$.
\end{theorem}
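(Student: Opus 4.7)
The case $p=1$ (giving $\alpha_1=d_h$ and density of $KS^{d_h,1}(K)$ in $L^1(K,\mu)$) is already covered by the general nested-fractal result of \cite{BV3}, so I focus on $p>1$. The strategy mirrors the Vicsek proof of Theorem \ref{P Vicsek}: introduce discrete $p$-energies $\mathcal{E}_p^m$ on the approximating graphs $V_m$, compare them to the continuous Korevaar-Schoen energies $E_{p,\alpha}(f,r)$ at scale $r\simeq 2^{-m}$, and read off the critical exponent from the scaling balance.

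Following \cite{HPS}, set
\[
\mathcal{E}_p^m(f)=r_p^{-m}\cdot\frac{1}{2}\sum_{\substack{x,y\in V_m\\ x\sim y}}|f(x)-f(y)|^p,
\]
where $r_p$ is the renormalization constant of the HPS problem and $x\sim y$ denotes adjacency in $V_m$. Using the comparability $A_p(a)\simeq F_p(a)$ together with the minimization identity $\min_{b}\sum A_p(\cdots)=r_p A_p(\cdot)$, distributed over the three sub-cells of each $m$-simplex, one obtains $\mathcal{E}_p^m(f)\le C\,\mathcal{E}_p^{m+1}(f)$ up to a universal constant. Setting
\[
\mathcal{F}_p:=\{f\in C(K):\sup_m\mathcal{E}_p^m(f)<\infty\},\qquad \|f\|_{\mathcal{F}_p}:=\sup_m\mathcal{E}_p^m(f)^{1/p},
\]
the class $\mathcal{F}_p$ contains every piecewise $A_p$-harmonic function, and in particular the non-constant ones built from any non-constant boundary data on $V_0$, so $\mathcal{F}_p$ is nontrivial.

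The core step is the two-sided comparison, valid for $f$ in a dense subclass and $m$ large,
\[
E_{p,\alpha}(f,2^{-m})\;\simeq\;(3^{-1}\cdot 2^{p\alpha}\cdot r_p)^{m}\,\mathcal{E}_p^m(f).
\]
It is derived as in Theorem \ref{P Vicsek}: cover $K$ by the $3^m$ $m$-simplices $K_w=\Psi_w(K)$ of diameter $\le 2^{-m}$ and measure $3^{-m}$, and combine (i) a cellwise Poincar\'e inequality bounding $\int_{K_w}|f-f_{K_w}|^p\,d\mu$ by a local discrete $p$-energy, with (ii) an averaging argument replacing $f$ by its cellwise averages and telescoping along $V_m$-edges, exploiting the $d_h$-Ahlfors regularity $\mu(B(x,2^{-m}))\simeq 3^{-m}$ and the bounded-overlap partition of unity from Proposition \ref{partition of unity}. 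The uniform boundedness of both sides as $m\to\infty$ forces the scaling factor to equal $1$, i.e.
\[
3^{-1}\cdot 2^{p\alpha_p}\cdot r_p=1,\qquad\alpha_p=\frac{1}{p}\!\left(\frac{\log 3}{\log 2}-\frac{\log r_p}{\log 2}\right).
\]
Hence $KS^{\alpha_p,p}(K)\supset\mathcal{F}_p$ contains non-constant functions at the critical value, while $KS^{\beta,p}(K)$ collapses to the constants for $\beta>\alpha_p$. Substituting the HPS bounds $r_p<1$ and $r_p\ge 2^{1-p}$ gives immediately $\alpha_p\in(d_h/p,\,1+(d_h-1)/p]$.

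Density of $KS^{\alpha_p,p}(K)$ in $L^p(K,\mu)$ is then obtained as in Section 2.5 of \cite{BC2022}: for $f\in C(K)$, let $\Phi_m$ be the $m$-piecewise $A_p$-harmonic interpolant of $f$ (equal to $f$ on $V_m$ and $A_p$-harmonic inside each $m$-simplex); by construction $\Phi_m\in\mathcal{F}_p\subset KS^{\alpha_p,p}(K)$, and $\Phi_m\to f$ uniformly by uniform continuity of $f$ and the vanishing diameter of the $m$-simplices, hence in $L^p(K,\mu)$; density then follows from density of $C(K)$ in $L^p(K,\mu)$. The main obstacle throughout is the non-explicit nature of $A_p$ when $p\ne 2$: only existence is known from \cite{HPS}, so every estimate must be squeezed out of the comparability $A_p\simeq F_p$ and the renormalization identity alone. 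In particular, the nonlinearity of the $p$-energy makes the cellwise Poincar\'e inequality in step (i) substantially more delicate than in the quadratic Dirichlet-form case, where the explicit value $r_2=3/5$ and the classical harmonic extension render the corresponding computations routine.
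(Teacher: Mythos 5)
Your overall architecture is sensible and your scaling computation correctly locates the candidate exponent (the balance $2^{p\alpha_p}r_p/3=1$ is exactly right, as is the deduction of the interval $\left(d_h/p,\,1+(d_h-1)/p\right]$ from $2^{1-p}\le r_p<1$), but the step you call the ``core step'' is false as stated, and it is precisely where the real difficulty lives. The two-sided termwise comparison $E_{p,\alpha}(f,2^{-m})\simeq(3^{-1}2^{p\alpha}r_p)^{m}\mathcal{E}_p^m(f)$ cannot hold with constants uniform over any class dense in $KS^{\alpha_p,p}(K)$: take $f$ to be the $p$-harmonic extension of the indicator of a single vertex of $V_n$ with $n$ large; then $f\in\mathcal{F}_p$, yet $\mathcal{E}_p^m(f)=0$ for every $m<n$ while $E_{p,\alpha}(f,2^{-m})>0$, so the direction $E_{p,\alpha}(f,2^{-m})\lesssim(\cdots)^m\mathcal{E}_p^m(f)$ fails at every scale coarser than the oscillation scale of $f$. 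Restricting to ``$m$ large depending on $f$'' does not repair this, because membership in $\mathcal{B}^{\alpha_p,p}(K)$ requires a bound uniform in $r$. The correct statements are one-sided and involve suprema and liminfs essentially: $\sup_{r>0}E_{p,\alpha_p}(f,r)\le C\sup_m\mathcal{E}_p^m(f)$ (which gives $\mathcal{F}_p\subset\mathcal{B}^{\alpha_p,p}(K)$ and hence nontriviality at $\alpha_p$) and $\sup_m\mathcal{E}_p^m(f)\le C\liminf_{r\to0}E_{p,\alpha_p}(f,r)$ (which gives collapse to constants for $\beta>\alpha_p$, since $f\in\mathcal{B}^{\beta,p}(K)$ only yields $E_{p,\alpha_p}(f,r)\to0$). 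Your ``uniform boundedness forces the scaling factor to equal $1$'' identifies the candidate but proves neither inequality. The paper does not reprove them either: it quotes Theorem 2.1 of \cite{HuJiWen} (recorded as Theorem \ref{carac Sobolev 2} and the weak monotonicity $\sup_{r}E_{p,\alpha}(f,r)\le C\limsup_{r\to0}E_{p,\alpha}(f,r)$) for $p>1$ and \cite{BV3} for $p=1$, exactly as you do for $p=1$.

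A second, related gap: on the gasket the ``cellwise Poincar\'e inequality'' and the ``telescoping along $V_m$-edges'' are not routine consequences of Ahlfors regularity and bounded overlap, as they essentially are on the Vicsek set (whose graph approximations are trees, making $\mathcal{E}_p^m$ genuinely monotone). They require the renormalization apparatus of \cite{HPS}: the modified energies $\mathcal{A}_p^m$ (which restore true monotonicity in $m$; your $\mathcal{E}_p^m\le C\,\mathcal{E}_p^{m+1}$ is only a comparability statement), the $A_p$-harmonic extension, and an oscillation bound for it (Lemma \ref{esti p}). This last point also touches your density argument: uniform continuity of $f$ plus vanishing cell diameter does not by itself give $\Phi_m\to f$ uniformly; one must bound the oscillation of the $A_p$-harmonic extension inside a cell by the oscillation of its boundary data, which is the content of Lemma \ref{approx} (Corollary 2.4 of \cite{HPS}), not of Section 2.5 of \cite{BC2022}, which concerns piecewise affine functions on the Vicsek set.
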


\begin{proof}
As before, the case $p=1$ follows from \cite[Theorem 5.1]{BV3}. For $p>1$ the result follows from the proof of Theorem 2.1 in  \cite{HuJiWen}. Indeed, a thorough inspection shows that it is actually proved there that for $\alpha=\frac{1}{p}\left( \frac{\log 3}{\log 2}-\frac{\log r_p}{\log 2}\right)$ one has
\[
\sup_{r >0} E_{p,\alpha}(f,r) \le C \limsup_{r  \to 0} E_{p,\alpha}(f,r).
\]
See also Corollary 2.1 in the same paper \cite{HuJiWen}. Finally, $KS^{\alpha_p,p}(K)$ is dense in $L^p(K,\mu)$ from Lemma \ref{approx} below.
\end{proof}

Since $\alpha_p > \frac{d_h}{p}$ when $p >1$ it follows from Theorem \ref{T:Morrey_local} that any function $f \in KS^{\alpha_p,p}(K)$, $p>1$, has a version which is $\alpha_p-\frac{d_h}{p}=-\frac{\log r_p}{p \log 2}$-H\"older continuous, see Remark \ref{regular version}. We will therefore see $KS^{\alpha_p,p}(K)$ as a subset of $C(K)$ when $p>1$.

As for the Vicsek set, it is possible to characterize the Korevaar-Schoen-Sobolev space as the domain of a limit of discrete $p$-energies. Denote $W=\{1, 2,3\}$ and $W_n=\{1, 2,3\}^n$ for $n\ge 1$. For any $w=(i_1, \cdots, i_n) \in W_n$, we denote by $\Psi_w$ the contraction mapping $\psi_{i_1}\circ \cdots\circ \psi_{i_n}$ and write $K_w:=\Psi_w(K)$. As before, the set $K_w$ is called  an $n$-simplex.

For $1 < p <+\infty$, the discrete $p$-energy on $V_m$ of  a function  $f \in C(K)$ is defined as
\[
\mathcal{E}_p^m(f):=\frac{1}{2} r_p^{-m}  \sum_{x,y \in V_m, x \sim y} |f(x)-f(y)|^p=r_p^{-m} \sum_{w \in W_m} F_p (f(\Psi_w(q_1)),f(\Psi_w(q_2)),f(\Psi_w(q_3))).
\]
Unlike for the Vicsek set, the sequence $\mathcal{E}_p^m(f)$ needs not be non-decreasing, however one may instead consider the modified $p$-energy
\[
\mathcal{A}_p^m(f):=r_p^{-m}\sum_{w \in W_m} A_p (f(\Psi_w(q_1)),f(\Psi_w(q_2)),f(\Psi_w(q_3)))
\]
where $A_p$ solves the renormalization problem. It  is then clear that for every $m$
\[
c  \mathcal{A}_p^m(f) \le \mathcal{E}_p^m(f)\le C \mathcal{A}_p^m(f)
\]
and moreover that $\mathcal{A}_p^m(f)$ is non-decreasing. In particular $\sup_{m \ge 0} \mathcal E_{p}^m(f)$ is finite if and only if the limit  $\lim_{m \to +\infty} \mathcal{A}_p^m(f) $ is finite.

\begin{defn}
Let $p>1$. For $f \in C(K)$, we define the (possibly infinite) $p$-energy of $f$ by
\[
\mathcal E_{p}(f):= \sup_{m \ge 0} \mathcal E_{p}^m(f).
\]
We define then
\[
\mathcal{F}_p=\left\{ f \in C(K): \sup_{m \ge 0} \mathcal{E}_p^m(f) <+\infty \right\}
\]
and consider on $\mathcal F_p$ the seminorm
\[
\| f \|_{\mathcal{F}_p}= \sup_{m \ge 0} \mathcal{E}_p^m(f)^{1/p}.
\]
\end{defn}

We have then the following characterization of the Korevaar-Schoen-Sobolev spaces in terms of the discrete $p$-energies:

\begin{theorem}\cite[Theorem 2.1]{HuJiWen} \label{carac Sobolev 2}
Let $p>1 $. For $f \in C(K)$ the following are equivalent:

\begin{enumerate}
\item $f \in KS^{\alpha_p,p}(K)$;
\item $f \in \mathcal F_p$;
\end{enumerate}
Moreover,  one has
\[
 \sup_{r  > 0} E_{p,\alpha_p} (f,r)^{1/p} \simeq \| f \|_{\mathcal F_p}. 
\]
\end{theorem}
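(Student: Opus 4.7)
The plan is to establish both inclusions with matching seminorm bounds, paralleling the two-step strategy of Theorem \ref{carac Sobolev} for the Vicsek set but with adaptations forced by three features of the Sierpi\'nski gasket: the discrete $p$-energies $\mathcal E_p^m(f)$ are not monotone in $m$, so the renormalized quantities $\mathcal A_p^m(f)$ play the role of the monotone approximation; the self-similarity is genuinely branching rather than tree-like; and cells have non-trivial interior. A useful scaling bookkeeping is $p\alpha_p = (\log 3 - \log r_p)/\log 2$, so that at dyadic scale $r = 2^{-m}$ we have $\mu(B(x,r)) \simeq 3^{-m}$ and $r^{-p\alpha_p} \simeq (3/r_p)^m$.

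\textbf{From $\mathcal F_p$ into $KS^{\alpha_p,p}(K)$.} By Lemma \ref{local-global} it suffices to control $E_{p,\alpha_p}(f,r)$ for small $r$. For $r \in (2^{-(m+1)}, 2^{-m}]$, each ball $B(x,r)$ intersects only $O(1)$ $m$-simplices $K_w$, so a standard covering argument reduces matters to the oscillation bound
\begin{equation*}
\sum_{w \in W_m} \mathrm{osc}(f, K_w)^p \leq C\, r_p^m\, \mathcal E_p(f).
\end{equation*}
This in turn rests on (i) the global H\"older estimate $\mathrm{osc}(f,K)^p \leq C \mathcal{E}_p(f)$ for $f \in \mathcal F_p$, obtained by iterating the renormalization identity and using the convexity of $A_p$ from \cite{HPS}, and (ii) the exact self-similarity $\sum_{w \in W_m} \mathcal E_p(f \circ \Psi_w) = r_p^m \mathcal E_p(f)$, which follows directly from the definition of $\mathcal E_p^n$ after re-indexing words in $W_{m+n}$. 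Applying (i) to each pullback $f \circ \Psi_w$ on $K$ and summing via (ii) yields the claimed estimate and hence $E_{p,\alpha_p}(f,r) \leq C \mathcal E_p(f)$ for all $r > 0$.

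\textbf{From $KS^{\alpha_p,p}(K)$ into $\mathcal F_p$.} Since $\alpha_p > d_h/p$, Theorem \ref{T:Morrey_local} provides a H\"older continuous representative of $f$, so vertex values $f(x)$ for $x \in V_m$ are unambiguous. For adjacent $x \sim y$ in $V_m$ sharing an $m$-simplex $K_w$, attach small sub-cells of a fixed refinement depth $k$ to $x$ and to $y$ and use H\"older continuity to replace pointwise values by averages, obtaining
\begin{equation*}
|f(x)-f(y)|^p \leq C \iint |f(z)-f(z')|^p \, d\mu(z)\, d\mu(z')
\end{equation*}
with the double integral taken over these sub-cells. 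Summing over edges of $V_m$, invoking the bounded overlap of the chosen sub-cells together with the scaling $r_p^{-m} \simeq 2^{m p \alpha_p} \cdot 3^{-m}$, gives $\mathcal E_p^m(f) \leq C\, E_{p,\alpha_p}(f, 2^{-m}) \leq C \sup_{r > 0} E_{p,\alpha_p}(f,r)$, uniformly in $m$, which shows $f \in \mathcal F_p$ with the matching seminorm inequality.

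\textbf{Main obstacle.} The technical heart lies in the oscillation estimate driving the first direction. On the Vicsek set the monotonicity \eqref{eq:increaseEn} made the restriction of the $p$-energy to sub-simplices transparent; on the gasket, the non-monotonicity of $\mathcal E_p^m$ forces one to detour through the renormalized energy $\mathcal A_p^m$, which in turn requires the convex function $A_p$ with eigenvalue exactly $r_p$ from \cite{HPS} to produce uniform constants across cells and levels. The delicate combinatorics underlying this step are the content of the proof of \cite[Theorem 2.1]{HuJiWen}, which I would follow throughout.
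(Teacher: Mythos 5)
First, note that the paper does not prove this statement at all: it is imported verbatim as \cite[Theorem 2.1]{HuJiWen}, so there is no internal proof to compare against, and your closing remark that you "would follow \cite{HuJiWen} throughout" is effectively the same move the paper makes. Judged on its own terms, your first direction ($\mathcal F_p\Rightarrow KS^{\alpha_p,p}(K)$) is essentially sound: the reduction of $E_{p,\alpha_p}(f,2^{-m})$ to $r_p^{-m}\sum_{w\in W_m}\mathrm{osc}(f,K_w)^p$, the global oscillation bound via chaining (as in Lemma \ref{esti p}), and the self-similar decomposition all work. One small caution: since $\mathcal E_p(f)=\sup_m\mathcal E_p^m(f)$ and the $\mathcal E_p^m$ are not monotone, the identity $\sum_{w\in W_m}\mathcal E_p(f\circ\Psi_w)=r_p^m\mathcal E_p(f)$ is only an equivalence up to constants; it becomes exact for the renormalized energies $\mathcal A_p^m$, which is why the detour through $A_p$ is needed, as you correctly anticipate.

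The genuine gap is in the second direction. The displayed inequality $|f(x)-f(y)|^p\leq C\iint|f(z)-f(z')|^p\,d\mu\,d\mu$ over sub-cells of a \emph{fixed} refinement depth $k$ attached to $x$ and $y$ is false as stated, and the H\"older-continuity route to it does not close. Replacing the vertex value $f(x)$ by a cell average at scale $2^{-m-k}$ costs an error of order $2^{-(m+k)(\alpha_p-d_h/p)}\,H$, where $H\lesssim\sup_r E_{p,\alpha_p}(f,r)^{1/p}$ is the H\"older seminorm from Theorem \ref{T:Morrey_local}; since $p(\alpha_p-d_h/p)=-\log r_p/\log 2$, this error contributes $\simeq r_p^{m}H^p$ per edge, and after multiplying by $r_p^{-m}$ and summing over the $\simeq 3^m$ edges of $V_m$ one gets $3^mH^p\to\infty$. (For fixed $k$ one can also build counterexamples directly: take $f$ nearly constant on each sub-cell except near the vertices.) The correct mechanism — and the actual content of \cite[Theorem 2.1]{HuJiWen} — is to telescope $f(x)-f_{U_j(x)}$ over \emph{all} levels $j\geq m$ of nested cells shrinking to $x$, bound each increment by an averaged difference at scale $2^{-j}$, and then apply H\"older's inequality with geometric weights so that the resulting double sum over levels and cells is controlled by $\sup_r E_{p,\alpha_p}(f,r)$; the convergence of the geometric series is exactly where the condition $p\alpha_p>d_h$ enters. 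As written, your sketch replaces this summation-over-scales argument with a single-scale pointwise bound that cannot hold.
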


One can construct plenty of functions in $\mathcal F_p$ and therefore in $KS^{\alpha_p,p}(K)$ by using the notion of $p$-harmonic extension. This extension procedure is explained in detail  in Corollary 2.4 in \cite{HPS} (see also \cite{CaoGuQiu}) and can be described as follows.  Let $n \ge 0$ and $f_n:V_n \to \mathbb R$. One can  extend $f_{n}$ into a function $f_{n+1}$ defined on $V_{n+1}$ such that:
\begin{itemize}
\item  For all $v \in V_n$, $f_{n+1}(v)=f_n(v)$;
\item $\mathcal{A}_p^{n+1}(f_{n+1})= \min \{ \mathcal{A}_p^{n+1}(g): g_{\mid V_n}=f_n \}$;
\end{itemize}
Indeed, if we fix  $w \in W_n$ and minimize over $b_1,b_2,b_3$ the quantity
\[
A_p (f_n(\Psi_w(q_1)),b_2,b_3)+A_p (b_1,f_n(\Psi_w(q_2)),b_3)+A_p(b_1,b_2,f_n(\Psi_w(q_3)))
\]
a minimizer will assign values of $f_{n+1}$ on $\Psi_w (V_1)$.
We note  then that
\[
\mathcal{A}_p^{n+1}(f_{n+1})=\mathcal{A}_p^{n}(f_{n}).
\]
The process can be repeated and we thus get a sequence of functions $f_m :V_m \to \mathbb R$, $m \ge n$ such that for every $v \in V_m$, $f_{m+1}(v)=f_m(v)$ and
\[
\mathcal{A}_p^{m}(f_{m})=\mathcal{A}_p^{n}(f_{n}).
\]
We denote then by $H_p(f_n)$ the function on $\cup_m V_m$ whose  restriction to each $V_m$ coincides with $f_m$. The function $H_p(f_n)$ is called a $p$-harmonic extension of $f_n$. We note that for suitable choices of $A_p$ the $p$-harmonic extension of $f_n$ is unique, see the discussion before Lemma A.2. in \cite{CaoGuQiu}.

\begin{lem}\label{esti p}
There exists a constant $C>0$ such that for every $u,v \in \cup_m V_m$,
\[
|H_p(f_n)(u)-H_p(f_n)(v)|^p \le  C d(u,v)^{-\frac{\log r_p}{\log 2}} \mathcal{A}_p^n(f_n).
\]
In particular $H_p(f_n)$ has a unique H\"older continuous extension to $K$, which we still denote by $H_p(f_n)$.
\end{lem}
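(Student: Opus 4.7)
The plan is a hierarchical chain argument based on the renormalization identity and the comparison $F_p \simeq A_p$. The starting point is to show that for every $\ell \ge 0$, every $w \in W_\ell$, and any two vertices $u, u'$ of the $\ell$-simplex $K_w = \Psi_w(K)$,
\[
|H_p(f_n)(u) - H_p(f_n)(u')|^p \le C\, r_p^\ell\, \mathcal{A}_p^n(f_n).
\]
Indeed, this difference is controlled by $F_p$ applied to the triple of vertex values on $K_w$, hence up to a constant by the corresponding $A_p$. That $A_p$ is a single summand of $\mathcal{A}_p^\ell(H_p(f_n)) = r_p^{-\ell}\sum_{w' \in W_\ell} A_p(\cdots)$, so it is bounded by $r_p^\ell \mathcal{A}_p^\ell(H_p(f_n))$, which in turn is $\le r_p^\ell \mathcal{A}_p^n(f_n)$ by monotonicity of $\mathcal{A}_p^\cdot$ together with the construction identity $\mathcal{A}_p^\ell(H_p(f_n)) = \mathcal{A}_p^n(f_n)$ for $\ell \ge n$.

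\textbf{Ancestor chain.} Next, for $u \in V_m$ I would inductively build a sequence $u_m := u,\, u_{m-1}, \dots, u_j$ with $u_\ell \in V_\ell$ such that $u_{\ell-1}$ and $u_\ell$ always share a common $\ell$-simplex. If $u_\ell \in V_{\ell-1}$, set $u_{\ell-1} := u_\ell$; otherwise $u_\ell$ is the midpoint of an edge of some $(\ell-1)$-simplex $\Psi_{w'}(K)$, hence a vertex of one of the three $\ell$-subsimplices of $\Psi_{w'}(K)$, and I take for $u_{\ell-1}$ the unique ``parent'' vertex of that subsimplex lying in $V_{\ell-1}$. Applying the single-simplex estimate at each scale, then Minkowski's inequality and the convergent geometric series $\sum_{\ell \ge j+1} r_p^{\ell/p} \le C r_p^{j/p}$ (since $r_p < 1$), yields
\[
|H_p(f_n)(u) - H_p(f_n)(u_j)| \le C\, r_p^{j/p}\, \mathcal{A}_p^n(f_n)^{1/p}.
\]

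\textbf{Joining and conclusion.} For $u, v \in \bigcup_m V_m$, I may assume $u, v$ lie in a common $V_m$. Choose $j$ with $2^{-j} \le d(u,v) < 2^{-j+1}$ and run the ancestor chains from both ends to get $u_j, v_j \in V_j$ with $d(u_j, v_j) \lesssim 2^{-j}$. A standard geometric fact for the gasket (see below) allows me to connect $u_j$ and $v_j$ in the graph $V_j$ by a path of $O(1)$ edges, each a side of a $j$-simplex; the single-simplex bound applied along this short path gives $|H_p(f_n)(u_j) - H_p(f_n)(v_j)| \le C r_p^{j/p} \mathcal{A}_p^n(f_n)^{1/p}$. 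Combining the three contributions, raising to the $p$-th power and using $r_p^j \le d(u,v)^{-\log r_p/\log 2}$ (which follows from the choice of $j$ and the fact that $-\log r_p > 0$) yields
\[
|H_p(f_n)(u) - H_p(f_n)(v)|^p \le C\, d(u,v)^{-\log r_p/\log 2}\, \mathcal{A}_p^n(f_n).
\]
The unique Hölder continuous extension to $K$ then follows from uniform continuity on the dense subset $\bigcup_m V_m$.

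\textbf{Main obstacle.} The most delicate ingredient is the bounded-cluster property invoked in the joining step: two vertices of $V_j$ at distance $\lesssim 2^{-j}$ should be connectable in the graph $V_j$ by a path of length bounded independently of $j$. This is a well-known consequence of the self-similar structure of the Sierpiński gasket (and of nested fractals more generally, via the bi-Lipschitz equivalence of $d$ with the Euclidean metric and a finite-count estimate on $j$-simplices meeting a given small ball), but should be verified explicitly or cited from the literature.
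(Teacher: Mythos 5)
Your proposal is correct and follows essentially the same route as the paper: a single-bond estimate $|H_p(f_n)(u)-H_p(f_n)(v)|^p\le C r_p^{\ell}\mathcal{A}_p^n(f_n)$ for vertices of a common $\ell$-simplex (obtained from $F_p\simeq A_p$ and the energy identity $\mathcal{A}_p^{\ell}(H_p(f_n))=\mathcal{A}_p^n(f_n)$), followed by the standard hierarchical chaining argument. The only difference is that you write out the chaining and the bounded-cluster connectivity step explicitly, whereas the paper delegates exactly this step to \cite{Str06} and \cite{CaoGuQiu}; your version is therefore more self-contained but proves the same thing the same way.
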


\begin{proof}
It is clear that for $m \ge n$ and $ u, v \in V_m, u \sim v$
\[
| f_m(u)-f_m(v)|^p \le r_p^m \mathcal{E}_p^m(f) \le Cr_p^m \mathcal{A}_p^m(f)=Cr_p^m \mathcal{A}_p^n(f).
\]
Therefore,
\[
 |H_p(f_n)(u)-H_p(f_n)(v)|^p \le  Cr_p^m \mathcal{A}_p^n(f).
 \]
 Since $d(u,v)=\frac{1}{2^m}$ this can be rewritten as
 \[
 |H_p(f_n)(u)-H_p(f_n)(v)|^p \le  C d(u,v)^{-\frac{\log r_p}{\log 2}} \mathcal{A}_p^n(f).
 \]
 Now, for general $u,v \in V_m$ one can use a chaining argument similar to the one described in the third paragraph of
Section 1.4 in \cite{Str06}, see also \cite[Proof of Proposition 5.3 (d)]{CaoGuQiu} .
\end{proof}

The following result follows  from Corollary 2.4 in \cite{HPS}.

\begin{lem}\label{approx}
Let $p >1$. For every $f \in C(K)$, let $f_n:V_n \to \mathbb R$ be the unique function on $V_n$ that coincides with $f$. Then
\[
\lim_{n\to +\infty} \sup_{x \in K} | H_p(f_n) (x) -f(x) |=0.
\]

\end{lem}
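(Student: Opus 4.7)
My plan is to combine two ingredients: a maximum principle for the $p$-harmonic extension together with the uniform continuity of $f$ on the compact space $K$. Since $K$ has diameter one and each $n$-simplex $K_w$ has diameter $2^{-n}$, if the values of $H_p(f_n)$ on $K_w$ stay trapped between the smallest and largest values of $f$ at the three corners of $K_w$, then uniform convergence is immediate.

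More precisely, the first step is to establish the following maximum principle: for every $n\ge 0$, every $w\in W_n$, and every $x\in K_w$,
\[
\min_{v\in \Psi_w(V_0)} f(v) \;\le\; H_p(f_n)(x) \;\le\; \max_{v\in \Psi_w(V_0)} f(v).
\]
To prove this, I would argue by induction on the subdivision level. Recall that $H_p(f_n)$ is obtained from $f_n$ by iteratively minimizing, on each $m$-simplex $K_{w'}$ with $m\ge n$, the sum
\[
A_p(a_1,b_2,b_3)+A_p(b_1,a_2,b_3)+A_p(b_1,b_2,a_3)
\]
over the new values $b_1,b_2,b_3\in\mathbb R$, where $a_1,a_2,a_3$ are the prescribed values at the corners $\Psi_{w'}(V_0)$. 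Because $A_p$ is convex and equivalent to $F_p(a)=\sum_{i\ne j}|a_i-a_j|^p$, the contribution of each variable $b_i$ is a sum of two terms of the form $A_p(\cdot,\cdot,\cdot)$ that are convex and strictly increasing as $b_i$ moves away from the interval $[\min_j a_j,\max_j a_j]$. Hence if a minimizer had a coordinate $b_i^*$ outside $[\min_j a_j,\max_j a_j]$, projecting that coordinate back onto the interval would strictly decrease the objective, contradicting minimality. Iterating this observation from level $n$ upward, the values of $H_p(f_n)$ at every $v\in V_m\cap K_w$ with $m\ge n$ remain in $[\min_{v\in\Psi_w(V_0)} f(v),\max_{v\in\Psi_w(V_0)} f(v)]$, and Lemma \ref{esti p} then extends this bound to all of $K_w$ by continuity.

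With the maximum principle in hand, the conclusion is short. Since $K$ is compact and $f\in C(K)$, $f$ is uniformly continuous on $K$; denote by $\omega_f$ its modulus of continuity. For any $x\in K$, choose an $n$-simplex $K_w$ with $x\in K_w$ and any corner $v\in\Psi_w(V_0)$. Then $d(x,v)\le 2^{-n}$ and by the maximum principle $H_p(f_n)(x)$ lies between two values of $f$ on $\Psi_w(V_0)$, so
\[
|H_p(f_n)(x)-f(x)|\;\le\;\max_{v,v'\in\Psi_w(V_0)}|f(v)-f(v')|+|f(v)-f(x)|\;\le\;2\,\omega_f(2^{-n}),
\]
which tends to zero as $n\to+\infty$ uniformly in $x\in K$.

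The main obstacle is justifying the maximum principle for the (possibly non-unique) solution $A_p$ of the renormalization problem. The argument sketched above is clean when $A_p=F_p$ but, because $A_p$ is only known through its variational characterization, one must rely only on its convexity and its equivalence with $F_p$. The cleanest way to make the projection step rigorous is probably to invoke the min-max characterization together with symmetry under the permutations of the vertices, and to exploit convexity of $A_p$ restricted to lines parallel to a coordinate axis. Once this local step is secured, the remainder of the argument is essentially a compactness plus uniform-continuity exercise.
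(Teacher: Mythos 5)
Your overall strategy (a maximum principle on each $n$-cell plus uniform continuity of $f$) would certainly give the result, and the second half of your argument is fine. The gap is in the first half: the maximum principle is exactly the step you leave as ``probably'' justifiable, and the justification you sketch does not go through with the properties of $A_p$ that are actually available. Convexity of $b_i\mapsto A_p(a_1,b_2,b_3)$ only tells you that this one-variable restriction is unimodal; it does not tell you that its minimum lies in $[\min_j a_j,\max_j a_j]$, and neither does the two-sided comparability $A_p\simeq F_p$ (for instance, precomposing $F_p$ with a linear map that fixes the diagonal $\{a_1=a_2=a_3\}$ and is bounded above and below modulo the diagonal produces a convex, nonnegative function comparable to $F_p$ whose coordinate-wise minimizers can sit outside the interval spanned by the boundary data). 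The property you actually need is the Markov (contraction) property $A_p(\phi(a_1),\phi(a_2),\phi(a_3))\le A_p(a_1,a_2,a_3)$ for every $1$-Lipschitz $\phi:\mathbb R\to\mathbb R$; with it, truncating a minimizer at $\min_j a_j$ and $\max_j a_j$ does not increase the objective, so a minimizer in the box exists (and one still has to address non-uniqueness: either show every minimizer obeys the bound or fix a choice of extension that does). That property holds for the $A_p$ constructed in \cite{HPS}, but it is not among the properties listed in the statement of the renormalization problem in this paper, so your proof as written rests on an unverified hypothesis. (The paper itself does not prove the lemma; it cites \cite[Corollary 2.4]{HPS}.)

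There is a way to close the argument without any maximum principle, using only the stated properties, and it may be worth noting. The extension is performed cell by cell, so for each $w\in W_n$ the level-$m$ energy of $H_p(f_n)$ restricted to $K_w$ equals its level-$n$ value $r_p^{-n}A_p\bigl(f(\Psi_w(q_1)),f(\Psi_w(q_2)),f(\Psi_w(q_3))\bigr)\le C\,r_p^{-n}\,\omega_f(2^{-n})^p$ for every $m\ge n$. Hence two adjacent vertices of $V_m\cap K_w$ differ by at most $C\,r_p^{(m-n)/p}\,\omega_f(2^{-n})$, and the chaining argument of Lemma \ref{esti p}, localized to $K_w$, sums a geometric series to give $\operatorname{osc}_{K_w}H_p(f_n)\le C\,\omega_f(2^{-n})$ with $C$ independent of $n$ and $w$. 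Combined with $|f(v)-f(x)|\le\omega_f(2^{-n})$ for a corner $v$ of $K_w$, this yields $\sup_K|H_p(f_n)-f|\le C\,\omega_f(2^{-n})\to 0$, which is the desired conclusion.
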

%
%
%

We are now ready to prove the following:

\begin{theorem}
Let $p \ge 1$. On the Sierpi\'nski  gasket the property $\mathcal{P}(p,\alpha_p)$ holds. Therefore,  if $p>1$,  $\mathrm{Var}_p (f)^p \simeq \mathcal{E}_p(f)$.
\end{theorem}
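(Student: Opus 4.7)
The case $p=1$ follows directly from \cite[Theorem 4.9]{BV3} (same reference used for the Vicsek set in Theorem~\ref{P Vicsek}), so the plan is to treat $p>1$ by mimicking the Vicsek proof, with $p$-harmonic extensions playing the role of piecewise affine interpolants. Fix $f \in KS^{\alpha_p,p}(K)$; by the Morrey estimate (Theorem~\ref{T:Morrey_local}) we may assume $f$ is continuous, so that $f \in \mathcal{F}_p$ by Theorem~\ref{carac Sobolev 2}. For $n \ge 1$ set
\[
f_n(v):=\frac{1}{\mu(K_{n+1}^*(v))}\int_{K_{n+1}^*(v)} f\,d\mu,\qquad v\in V_n,
\]
where $K_{n+1}^*(v)$ is the union of the $(n+1)$-simplices containing $v$, and define $\Phi_n := H_p(f_n)$. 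By construction $\mathcal{A}_p^{n+k}(\Phi_n)=\mathcal{A}_p^n(f_n)$ for all $k\ge 0$, and since $\mathcal{A}_p\simeq F_p$ we get $\mathcal{E}_p(\Phi_n) \simeq \mathcal{A}_p^n(f_n)$.

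The first step is to show that $\Phi_n \to f$ in $L^p(K,\mu)$. Since $f$ is continuous, $|f_n(v)-f(v)|\to 0$ uniformly on $V_n$; the uniqueness/stability of the $p$-harmonic extension (together with Lemma~\ref{esti p}) then forces $\Phi_n$ to be uniformly close to $H_p(f|_{V_n})$, which in turn converges uniformly to $f$ by Lemma~\ref{approx}. Hence $\Phi_n \to f$ uniformly and a fortiori in $L^p(K,\mu)$. The second, and main, step is to bound $\mathcal{A}_p^n(f_n)$ by a Besov energy of $f$ at scale $\sim 2^{-n}$. Using $\mathcal{A}_p \simeq F_p$ we work with
\[
\mathcal{E}_p^n(f_n)=\tfrac12\,r_p^{-n}\sum_{x\sim y,\ x,y\in V_n}|f_n(x)-f_n(y)|^p,
\]
apply H\"older's inequality as in the Vicsek proof together with the Ahlfors regularity $\mu(K_{n+1}^*(v))\simeq 3^{-n}$, and the fact that for adjacent $x,y\in V_n$ and $z\in K_{n+1}^*(x)$ one has $K_{n+1}^*(y)\subset B(z,C\,2^{-n})$, to obtain
\[
|f_n(x)-f_n(y)|^p \le C\,3^{2n}\int_{K_{n+1}^*(x)}\int_{B(z,C2^{-n})}|f(z)-f(w)|^p\,d\mu(w)\,d\mu(z).
\]
Summing over the $O(3^n)$ edges using the bounded overlap of $\{K_{n+1}^*(v)\}_{v\in V_n}$ and invoking the key algebraic identity $r_p=3\cdot 2^{-p\alpha_p}$ (equivalent to the definition of $\alpha_p$) gives
\[
\mathcal{E}_p^n(f_n) \le C\,E_{p,\alpha_p}(f,C\,2^{-n}).
\]

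Combining the two steps with Theorem~\ref{carac Sobolev 2} yields, for every $r>0$,
\[
E_{p,\alpha_p}(\Phi_n,r) \le C\,\mathcal{E}_p(\Phi_n) \le C\,E_{p,\alpha_p}(f,C\,2^{-n}).
\]
Using Fatou's lemma and the $L^p$ convergence $\Phi_n \to f$ on the left-hand side (as in the final step of the Vicsek argument in Theorem~\ref{P Vicsek}), then taking $\liminf_{n\to\infty}$ on the right, one concludes
\[
\sup_{r>0} E_{p,\alpha_p}(f,r) \le C\liminf_{r\to 0} E_{p,\alpha_p}(f,r),
\]
which is exactly $\mathcal{P}(p,\alpha_p)$. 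The equivalence $\mathrm{Var}_p(f)^p \simeq \mathcal{E}_p(f)$ is then immediate from Theorem~\ref{carac Sobolev 2}. The main obstacle compared with the Vicsek case is that the $p$-harmonic extension is nonlinear and not given by an explicit partition of unity, so both the $L^p$ convergence $\Phi_n\to f$ and the energy comparison have to be handled via $\mathcal{A}_p\simeq F_p$ and the Hölder bound of Lemma~\ref{esti p} rather than by direct pointwise computation; matching scales also relies crucially on the relation $r_p=3\cdot 2^{-p\alpha_p}$, which is precisely how $\alpha_p$ was defined.
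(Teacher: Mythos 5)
Your overall architecture is the same as the paper's: $p=1$ by citing \cite[Theorem 4.9]{BV3}, and for $p>1$ the replacement of the Vicsek piecewise-affine interpolants by $\Phi_n=H_p(\hat f_n)$ with $\hat f_n(v)$ the average of $f$ over $K_{n+1}^*(v)$, the chain $E_{p,\alpha_p}(\Phi_n,r)\le C\,\mathcal E_p(\Phi_n)\le C\,\mathcal A_p^n(\hat f_n)\le C\,\mathcal E_p^n(\hat f_n)$, and the H\"older-plus-bounded-overlap estimate whose scales match precisely because $r_p^{-n}3^{n}=2^{np\alpha_p}$ (your identity $r_p=3\cdot 2^{-p\alpha_p}$) are exactly the paper's steps, and that part of your argument is correct.

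The one place where your argument has a genuine gap is the proof that $\Phi_n\to f$ in $L^p(K,\mu)$. You deduce it from ``uniqueness/stability of the $p$-harmonic extension'', i.e.\ from an implicit claim that $\|H_p(g)-H_p(h)\|_{\infty}\le C\|g-h\|_{\infty,V_n}$, which you then combine with Lemma~\ref{approx}. But $H_p$ is a \emph{nonlinear} operator defined by successive minimization of $\mathcal A_p$, and no sup-norm stability (nor even uniqueness, except ``for suitable choices of $A_p$'') is established in the paper; such stability would require a comparison/maximum principle for minimizers of $A_p$, which is not among the properties $A_p$ is assumed to satisfy (convexity, $A_p\simeq F_p$, and the renormalization identity). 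The paper avoids this entirely: it bounds $|f(x)-\Phi_n(x)|$ pointwise by $|f(x)-\hat f_n(v)|+|\Phi_n(v)-\Phi_n(x)|$ for a vertex $v$ of the $n$-simplex containing $x$, controls the first term by the Morrey--H\"older continuity of $f$ (Theorem~\ref{T:Morrey_local}) and the second by Lemma~\ref{esti p} applied to $\Phi_n$ itself together with the already-proved bound $\mathcal E_p^n(\Phi_n)\le C\sup_{\eps>0}E_{p,\alpha_p}(f,\eps)<\infty$. You should replace your stability appeal by this direct estimate (all the ingredients are already in your write-up), after which the proof closes exactly as you describe; the final passage from the discrete scales $\eps_n=2^{-n+2}$ to $\liminf_{\rho\to 0}$ uses the doubling of $\mu$ and is harmless.
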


\begin{proof}
If $p=1$ the validity of $\mathcal{P}(1,\alpha_1)$ is established in \cite[Theorem 4.9]{BV3}, so we assume that $p>1$. The proof is relatively similar to the proof of Theorem \ref{P Vicsek}; the idea is to replace piecewise affine functions by $p$-harmonic extensions. Let $f \in KS^{\alpha_p,p}(K)$. For any fixed $n\ge 1$, we first define a function $\hat f_n$ on $V_n$ by
\[
\hat f_{n}(v):=\frac{1}{\mu(K_{n+1}^*(v))} \int_{K_{n+1}^*(v) } f d\mu,\quad v \in V_n,
\]
where $K_{n+1}^*(v)$ is the union of the $(n+1)$-simplices containing $v$. Then, we let $\Phi_n=H_p(\hat f_n)$. We observe that the covering $\{K_n^*(v)\}_{v\in V_n}$ has the bounded overlap property. Also, for any $x\in K_n^*(v)$,  $K_{n+1}^*(v)\subset B(x, 2^{-n+1})$.  From Theorem \ref{carac Sobolev 2}, we have for every $r >0$
\[
\frac{1}{r^{p\alpha_p}}\int_{K}\fint_{B(z, r)}|\Phi_n(z)-\Phi_n(w)|^pd\mu(w)d\mu(z) \le C \mathcal E_{p}(\Phi_n) \le C\mathcal A_{p}(\Phi_n)   \le C \mathcal E_p^n(\Phi_n)
\]
and $\mathcal E_p^n(\Phi_n)$ can be controlled as follows. Observe that for any $x\in V_n$, one has $\Phi_n(x)=\hat f_n(x)$ by definition. Hence 
\begin{align*}
    \mathcal E_p^n(\Phi_n)=\frac12 r_p^{-n}\sum_{x,y\in V_n,x\sim y}|\hat f_n(x)-\hat f_n(y)|^p.
\end{align*}
For any neighboring vertices $x,y \in V_n$, H\"older's inequality yields
\begin{align*}
    |\hat f_n(x)-\hat f_n(y)|
    & \le \frac{1}{\mu(K_{n+1}^*(x))\mu(K_{n+1}^*(y))}\int_{K_{n+1}^*(x)}\int_{K_{n+1}^*(y)}|f(z)-f(w)|d\mu(w)d\mu(z)
    \\ &\le C\left(3^{2n} \int_{K_{n+1}^*(x)}\int_{K_{n+1}^*(y)}|f(z)-f(w)|^pd\mu(w)d\mu(z)\right)^{\frac1p}.
\end{align*}
Notice   that if $x,y\in V_n$ are adjacent then $K_{n+1}^*(y) \subset B(z,  2^{-n+2})$ for any $z\in K_{n+1}^*(x)$. Therefore we get
\[
  |\hat f_n(x)-\hat f_n(y)|^p\le C 3^{2n}\int_{K_{n+1}^*(x)}\int_{ B(z,  2^{-n+2})}|f(z)-f(w)|^pd\mu(w)d\mu(z).
\]
By the bounded overlap property of $\{K_{n+1}^*(v)\}_{v\in V_n}$, we then have
\begin{align*}
    \mathcal E_p^n(\Phi_n)
    & \le Cr_p^{-n}3^{2n} \sum_{x,y\in V_n, x\sim y} \int_{K_{n+1}^*(x)}\int_{K_{n+1}^*(y)}|f(z)-f(w)|^pd\mu(w)d\mu(z)
    \\ &\le  Cr_p^{-n}3^{2n} \int_{K}\int_{ B(z,  2^{-n+2})}|f(z)-f(w)|^pd\mu(w)d\mu(z).
\end{align*}
Set $\eps_n= 2^{-n+2}$. We can rewrite the above inequality as
\begin{align}\label{esti phi}
\mathcal E_p^n(\Phi_n) \le  \frac{C}{\eps_n^{p\alpha_p}}\int_{K}\fint_{ B(z, \eps_n)}|f(z)-f(w)|^pd\mu(w)d\mu(z).
\end{align}
Consequently, we have for every $r >0$
\[
\frac{1}{r^{p\alpha_p}}\int_{K}\fint_{B(z, r)}|\Phi_n(z)-\Phi_n(w)|^pd\mu(w)d\mu(z) \le \frac{C}{\eps_n^{p\alpha_p}}\int_{K}\fint_{ B(z, \eps_n)}|f(z)-f(w)|^pd\mu(w)d\mu(z).
\]

To conclude, it remains therefore to prove that $\Phi_n$ converges to $f$ in $L^p(K,\mu)$. Since $f \in KS^{\alpha_p,p}(K)$, from  Theorem \ref{T:Morrey_local}, for every $x,y \in K$
\[
|  f(x) -f(y) |^p \le C d(x,y)^{-\frac{\log r_p}{\log 2}}.
\]
Let now $n \ge 1$ and $w \in W_n$. One has for $x \in K_w$
\[
| f(x) - \Phi_n (x)|\le | f(x) -\hat f_n(v)| + |\Phi_n (v)-\Phi_n (x)|  
\]
where $v$ is a vertex of $K_w$.  We have first
\begin{align*}
| f(x) -\hat f_n(v)| & = \left| f(x) -\frac{1}{\mu(K_{n+1}^*(v))} \int_{K_{n+1}^*(v) } f d\mu \right| \\
 &\le  C 3^n  \int_{K_{n+1}^*(v) }| f(x) -f(y)| d\mu(y) \\
 & \le C r_p^n.
\end{align*}
Then, from Lemma \ref{esti p}
\begin{align*}
|\Phi_n (v)-\Phi_n (x)| \le C r_p^n \mathcal{A}^n_p( \Phi_n) \le C r_p^n \mathcal{E}^n_p( \Phi_n) .
\end{align*}
From \eqref{esti phi} we know that 
\[
\mathcal{E}^n_p( \Phi_n) \le C \sup_{\eps >0} \frac{1}{\eps^{p\alpha_p}}\int_{K}\fint_{B(z, \eps)}|f(z)-f(w)|^pd\mu(w)d\mu(z) <+\infty.
\]
Therefore, for all $x \in K$
\[
| f(x) - \Phi_n (x)|\le C r_p^n,
\]
which implies that $\Phi_n \to f$ in $L^p(K,\mu)$. The proof is by now complete.
\end{proof}

As a  consequence of the property $\mathcal{P} (p,\alpha_p)$ and Theorem \ref{sobo}, we  get the following Nash inequalities on the Sierpi\'nski  gasket.

\begin{cor}
For $p>1$ the following Nash inequality holds for every $f \in KS^{\alpha_p,p}(K)$,
\[
\| f \|_{L^p(K,\mu)} \le C \left(\|f\|_{L^p(K,\mu)}+ \mathrm{Var}_p (f)\right)^{\theta} \| f \|^{1-\theta}_{L^1(K,\mu)}
\]
with $\theta=\frac{(p-1)d_h}{p(\alpha_p+d_h)-d_h}$, while for $p=1$ there exists a constant $C>0$ such that for every $f \in KS^{d_h,1}(K)$
\begin{align*}
\| f \|_{L^\infty(K,\mu)} \le C \left(\|f\|_{L^1(K,\mu)}+ \mathrm{Var}_1 (f)\right).
\end{align*}
\end{cor}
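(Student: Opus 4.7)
The plan is to invoke the general Gagliardo--Nirenberg machinery developed in Theorem~\ref{sobo} and simply specialize the parameters. The Sierpi\'nski gasket $(K,d,\mu)$ is Ahlfors $d_h$-regular with $Q = d_h$ in the exponent of \eqref{eq:mass-bounds}, and it is compact of finite diameter, so the volume non-collapsing condition $\inf_{x\in K}\mu(B(x,R))>0$ holds trivially by taking $R=\mathrm{diam}(K)$. The property $\mathcal{P}(p,\alpha_p)$ has just been established in the previous theorem for every $p\ge 1$. Therefore, both standing hypotheses of Section~\ref{Section sobo} are satisfied, and Theorem~\ref{sobo} is available on $(K,d,\mu)$ with $\alpha = \alpha_p$ and $Q=d_h$.

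For $p>1$, I would apply Theorem~\ref{sobo} with the specific choice $r=p$ and $s=1$. Since $q=\tfrac{pd_h}{d_h-\alpha_p p}$ (the conclusion also handles $q=\infty$ if $\alpha_p p = d_h$), the balancing condition
\[
\frac{1}{p} = \frac{\theta}{q} + \frac{1-\theta}{1}
\]
becomes
\[
\frac{1}{p}-1 = \theta\Bigl(\frac{d_h-\alpha_p p}{p d_h}-1\Bigr) = -\theta\,\frac{p\alpha_p + (p-1)d_h}{pd_h},
\]
which solves to
\[
\theta = \frac{(p-1)d_h}{p(\alpha_p+d_h)-d_h},
\]
exactly the exponent stated in the corollary. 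The inequality~\eqref{E:Gagliardo-Nirenberg:corollary1_loc} from Theorem~\ref{sobo} then yields the claimed Nash inequality verbatim.

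For $p=1$ one has $\alpha_1 = d_h$, hence $p\alpha_1 = d_h = Q$, which places us in the borderline case $p\alpha \ge Q$ listed as item~(4) in the particular cases following Theorem~\ref{sobo}. Choosing $s=1$ there gives $\theta = \tfrac{pQ}{pQ+s(p\alpha-Q)} = \tfrac{d_h}{d_h+0} = 1$, so the bound reduces to
\[
\|f\|_{L^\infty(K,\mu)} \le C\bigl(\|f\|_{L^1(K,\mu)} + \mathrm{Var}_1(f)\bigr),
\]
which is the stated $p=1$ conclusion. In both cases the proof is essentially just identifying the correct specialization; there is no substantial obstacle, since all the hard work (the validity of $\mathcal{P}(p,\alpha_p)$ on $K$ and the general Gagliardo--Nirenberg scale) has already been done. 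The only thing to double-check is the arithmetic for $\theta$ and that the non-collapsing constant entering Theorem~\ref{sobo} is finite, both of which are immediate from compactness and Ahlfors regularity.
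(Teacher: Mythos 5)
Your proposal is correct and is essentially the paper's own argument: the corollary is stated there as an immediate consequence of the property $\mathcal{P}(p,\alpha_p)$ (just established for the gasket) together with the particular cases $r=p$, $s=1$ and $p\alpha\ge Q$, $s=1$ of Theorem~\ref{sobo}, with $Q=d_h$ and non-collapsing supplied by compactness. Your computation of $\theta$ and the reduction to $\theta=1$ when $p=1$ (since $\alpha_1=d_h$) match the paper exactly.
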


\section{Korevaar-Schoen-Sobolev spaces and  heat kernels}

In this section, for completeness, we now briefly survey some of the results in \cite{MR4196573} and \cite{BV2,BV3,BV1}  where a connection was deepened between the theory of Dirichlet forms and the theory of Korevaar-Schoen-Sobolev spaces following earlier works like \cite{MR2574998}. This connection allows to study properties of the Korevaar-Schoen-Sobolev spaces in some settings  where Poincar\'e inequalities might not be available but a rich theory of heat kernels is. We also mention several open questions related to this approach which are connected to the results of the present paper.

\subsection{Dirichlet forms with Gaussian or sub-Gaussian heat kernel estimates}

As before, $(X,d,\mu)$ is a metric measure space where $\mu$ is a positive and doubling Borel regular measure. We use the basic definitions and properties of Dirichlet forms and associated  heat semigroups listed in \cite[Section 2]{BV1}.  For a complete exposition of the theory we refer to \cite{MR1303354}.

 Let  $(\mathcal{E},\mathcal{F}=\mathbf{dom}(\mathcal{E}))$ be a Dirichlet form on $L^2(X,\mu)$. We call $(X,d,\mu,\mathcal E, \mathcal F)$ a metric measure Dirichlet space. We  assume that the semigroup $\{P_t\}$ associated with $\mathcal{E}$ is stochastically complete (i.e. $P_t 1=1$) and  has a measurable heat kernel $p_t(x,y)$ satisfying, for some $c_{1},c_{2}, c_3, c_4 \in(0,\infty)$ and $ d_h \ge 1, d_{w}\in [2,+\infty)$,
 \begin{equation}\label{eq:subGauss-upper}
 c_{1}t^{-d_{h}/d_{w}}\exp\biggl(-c_{2}\Bigl(\frac{d(x,y)^{d_{w}}}{t}\Bigr)^{\frac{1}{d_{w}-1}}\biggr) 
 \le p_{t}(x,y)\leq c_{3}t^{-d_{h}/d_{w}}\exp\biggl(-c_{4}\Bigl(\frac{d(x,y)^{d_{w}}}{t}\Bigr)^{\frac{1}{d_{w}-1}}\biggr)
 \end{equation}
 for $\mu$-a.e. $(x,y)\in X\times X$ for each $t\in\bigl(0,\mathrm{diam}(X)^{d_w}\bigr)$.  As before, $\mathrm{diam}(X)$ is the diameter of $X$ which could possibly  be $+\infty$.
 
 The exact values of $c_1,c_2,c_3,c_4$ are irrelevant.  However, the parameters $d_h$ and $d_w$ are important.  We will see below that the parameter $d_h$ is the Ahlfors dimension (volume exponent). The parameter $d_w$ is  called the walk dimension (for its probabilistic interpretation). When $d_w=2$, one speaks of Gaussian estimates and when $d_w > 2$, one speaks of sub-Gaussian estimates. 
 
 In some concrete situations like manifolds or fractals, the estimates \eqref{eq:subGauss-upper} might be obtained using geometric, analytic or probabilistic methods. A large amount of literature is  devoted to the study of such estimates, see for instance \cite{MR2569498}, \cite{MR3276002}, \cite{MR3417504},  \cite{MR2512802}. Therefore, at least for our purpose here, they are a reasonable  assumption to work with. In Barlow \cite{MR1668115}, geodesic complete metric spaces supporting a heat kernel satisfying the estimates \eqref{eq:subGauss-upper} are called fractional spaces.
 
 A basic consequence of \eqref{eq:subGauss-upper} is  the $d_h$-Ahlfors regularity of the space (see \cite[Theorem 3.2]{GHL03}): There exist constants $c,C>0$ such that for every $x \in X$, $r \in (0, \mathrm{diam} (X) )$,
\[
c r^{d_h} \le  \mu (B(x,r)) \le C r^{d_h}.
\]

The first important result connecting the theory of Dirichlet forms to the theory of Korevaar-Schoen-Sobolev spaces is the following result, which was already mentioned and reproved in the case where $X$ is compact, see Proposition \ref{appli to}.  
\begin{theorem}
We have $\mathcal F=KS^{d_w/2,2}(X)$. Moreover, there exist  constants $c,C>0$ such that for every $f \in   KS^{d_w/2,2}(X)$,
\[
c\sup_{r >0} E_{2,d_w/2} (f,r) \le \mathcal{E}(f,f) \le C \liminf_{r \to 0} E_{2,d_w/2} (f,r). 
\]
In particular, the property $\mathcal{P}( 2, d_w/2)$ holds.
\end{theorem}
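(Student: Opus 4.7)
The plan is to establish both inclusions of $\mathcal{F} = KS^{d_w/2,2}(X)$ jointly with the two displayed inequalities by working with the heat semigroup approximation
\[
\mathcal{E}^{(t)}(f) := \frac{1}{2t}\int_X\int_X (f(x)-f(y))^2 p_t(x,y)\,d\mu(x)\,d\mu(y),
\]
which, by stochastic completeness, the symmetry of $p_t$, and the spectral theorem, is non-increasing in $t$ with $\lim_{t \to 0^+} \mathcal{E}^{(t)}(f) = \mathcal{E}(f,f)$ (understood as $+\infty$ when $f \notin \mathcal{F}$). Throughout I use freely that \eqref{eq:subGauss-upper} forces $d_h$-Ahlfors regularity. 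The left-hand inequality follows almost immediately from the Gaussian lower bound in \eqref{eq:subGauss-upper}: with $t = r^{d_w}$, the exponent is uniformly bounded on $B(x,r)$, so $p_t(x,y) \geq c_1 t^{-d_h/d_w}$ there, which by Ahlfors regularity is comparable to $\mu(B(x,r))^{-1}$. Keeping only the near contribution to $\mathcal{E}^{(r^{d_w})}(f)$ gives $\mathcal{E}(f,f) \geq \mathcal{E}^{(r^{d_w})}(f) \geq (c/2)\, E_{2,d_w/2}(f,r)$ for every $r > 0$, which upon taking the sup yields the left-hand inequality and in particular $\mathcal{F} \subset KS^{d_w/2,2}(X)$.

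For the upper bound I use a dyadic annular decomposition with a free scale parameter $\lambda \geq 1$ to be chosen later. Keeping $t = r^{d_w}$, write $X = B(x,\lambda r) \cup \bigcup_{k \geq 1} A_k$ with $A_k = \{y : 2^{k-1}\lambda r < d(x,y) \leq 2^k\lambda r\}$. The Gaussian upper bound gives $p_t(x,y) \leq c_3 r^{-d_h}$ on the near ball and $p_t(x,y) \leq c_3 r^{-d_h}\exp(-c_4(2^{k-1}\lambda)^{d_w/(d_w-1)})$ on $A_k$. Combined with the Ahlfors-regularity estimate
\[
\int_X \int_{B(x,s)} (f(x)-f(y))^2 \, d\mu(y)\,d\mu(x) \leq C\, s^{d_h+d_w}\, E_{2,d_w/2}(f,s),
\]
dividing by $2t$ and summing over $k$ produces
\[
\mathcal{E}^{(r^{d_w})}(f) \leq C\lambda^{d_h+d_w}\, E_{2,d_w/2}(f,\lambda r) + C_\lambda \sup_{s > 0} E_{2,d_w/2}(f,s),
\]
with $C_\lambda = C\sum_{k \geq 1}(2^k\lambda)^{d_h+d_w}\exp(-c_4(2^{k-1}\lambda)^{d_w/(d_w-1)})$. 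Because $d_w \geq 2$ yields $d_w/(d_w-1) > 1$, the doubly exponential decay in $k$ makes the sum convergent, and a dominated convergence argument then shows $C_\lambda \to 0$ as $\lambda \to \infty$.

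To conclude, I take $\liminf_{r \to 0}$ in the previous display. Since $\mathcal{E}^{(r^{d_w})}(f) \nearrow \mathcal{E}(f,f)$ and $\liminf_{r \to 0} E_{2,d_w/2}(f,\lambda r) = \liminf_{r \to 0} E_{2,d_w/2}(f,r)$ by reparametrization in $r > 0$, one obtains
\[
\mathcal{E}(f,f) \leq C\lambda^{d_h+d_w}\liminf_{r \to 0} E_{2,d_w/2}(f,r) + C_\lambda \sup_{s > 0} E_{2,d_w/2}(f,s).
\]
For $f \in KS^{d_w/2,2}(X)$ the sup on the right is finite, so $\mathcal{E}(f,f) < \infty$, settling the remaining inclusion $KS^{d_w/2,2}(X) \subset \mathcal{F}$. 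Plugging in the bound $\sup_s E_{2,d_w/2}(f,s) \leq (2/c)\,\mathcal{E}(f,f)$ from the first paragraph and choosing $\lambda$ so large that $2C_\lambda/c \leq 1/2$, I absorb the second term on the right into the left-hand side, concluding $\mathcal{E}(f,f) \leq C' \liminf_{r \to 0} E_{2,d_w/2}(f,r)$. The main technical obstacle is precisely this absorption step: it only goes through because $\lambda$ can be chosen depending only on the heat kernel and Ahlfors constants (independently of $f$), and because the full sub-Gaussian upper bound, not merely heat-kernel integrability, is needed to make $C_\lambda$ decay to zero as $\lambda \to \infty$.
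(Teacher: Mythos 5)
Your proof is correct, but it follows a genuinely different route from the paper's. The paper does not establish this theorem by direct heat-kernel comparison: it obtains it as a corollary of the generalized $2$-Poincar\'e inequality and the controlled cutoff condition, via Theorem \ref{equi poinc gene} applied to the local transition kernel $d\rho_t(x,y)=\frac{1}{t}p_t(y,dx)\,d\mu(y)$ (see Proposition \ref{appli to}), with the verification of those two hypotheses outsourced to \cite{MR2228569}; the identification $\mathcal F=KS^{d_w/2,2}(X)$ and the property $\mathcal P(2,d_w/2)$ are credited to \cite{MR1400205}, \cite{GHL03}, \cite{MR1800954} and \cite[Proposition 3.5]{MR4196573} --- the last of which the paper explicitly describes as using ``completely different methods'', and those methods are essentially yours. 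Your argument is shorter and self-contained: stochastic completeness and the spectral theorem give the monotone convergence $\mathcal E^{(t)}(f)=\frac1t\langle(I-P_t)f,f\rangle\nearrow\mathcal E(f,f)$ as $t\downarrow 0$, the on-diagonal lower bound at scale $t=r^{d_w}$ gives the left inequality, and the dyadic annuli plus the absorption of $C_\lambda\sup_s E_{2,d_w/2}(f,s)$ into $\mathcal E(f,f)$ --- legitimate because you first deduce $\mathcal E(f,f)<\infty$ for $f\in KS^{d_w/2,2}(X)$ before subtracting --- give the right one; the fact that $\lambda$ can be fixed independently of $f$ is, as you say, the crux. What the paper's heavier route buys is a template valid for every $p\ge1$ and reusable for the fractal examples of Section \ref{section fractal}, whereas your argument is locked to $p=2$ by the spectral-theoretic monotonicity of $\mathcal E^{(t)}$. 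One point you should make explicit: the estimates \eqref{eq:subGauss-upper} are only assumed for $t$ up to a power of $\mathrm{diam}(X)$, so when $X$ is bounded the supremum over all $r>0$ in the left-hand inequality must first be reduced to $r\lesssim\mathrm{diam}(X)$ (via Lemma \ref{local-global} together with a spectral-gap estimate, exactly as in the restricted supremum appearing in Theorem \ref{Besov characterization}); this caveat affects the paper's formulation equally and is harmless for the $\liminf$ side, which only sees small $r$.
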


Note that in   \cite[Theorem 4.1]{MR2161694} the domain of the Dirichlet form associated with some fractals satisfying  more general heat kernel estimates is identified as a Besov-Lipschitz space with variable regularity.  In that framework it would be interesting to investigate a theory of Korevaar-Schoen-Sobolev spaces with variable regularity.

\subsection{Besov-Lipschitz spaces and heat kernels}

 Let $p \ge 1$ and $\beta \ge 0$. Similarly to \cite{BV1}, we define the  Besov seminorm associated with the heat semigroup as follows
\begin{equation}\label{eq:defnofheatbesovnorm}
\| f \|_{p,\beta }:= \sup_{t \in (0,\mathrm{diam}(X)^{d_w})} t^{-\beta} \left( \int_X \int_X |f(x)-f(y) |^p p_t (x,y) d\mu(x) d\mu(y) \right)^{1/p}
\end{equation}
and define the heat semigroup-based Besov class by
\[
\mathbf{B}^{p,\beta}(X):=\{ f \in L^p(X,\mu)\, :\, \| f \|_{p,\beta} <+\infty \}.
\]

The following result is essentially proved in \cite{MR2574998}, see also  \cite[Theorem 2.4]{BV3} and its proof. It establishes the basic and fundamental connection between the Besov-Lipschitz spaces and the study of heat kernels.

\begin{theorem}\label{Besov characterization}
Let $p \ge 1$ and $\alpha > 0$. We have $\mathcal{B}^{\alpha,p}(X) = \mathbf{B}^{p,\frac{\alpha}{d_w}}(X)$ and 
there exist constants $c,C>0$ such that for every $f \in \mathcal{B}^{\alpha,p}(X)$ and $r \in (0,\mathrm{diam}(X)) $,
\[
c \sup_{s\in(0,\mathrm{diam}(X))}E_{p,\alpha}(f,s)^{1/p} \le \| f \|_{p,\alpha/d_w} 
\le C  \left( \sup_{s\in(0,r]}E_{p,\alpha}(f,s)^{1/p}+\frac{1}{r^{\alpha}} \|f\|_{L^{p}(X,\mu)} \right).
\]
In particular, if $\mathrm{diam}(X)=+\infty$, then $ \| f \|_{p,\alpha/d_w} \simeq \sup_{s>0 } E_{p,\alpha}(f,s)^{1/p}$.
\end{theorem}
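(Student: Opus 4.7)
The plan is to treat the two inequalities separately, each time exploiting the matching two-sided sub-Gaussian estimate~\eqref{eq:subGauss-upper} together with the Ahlfors regularity $\mu(B(x,r))\simeq r^{d_h}$ that follows from it.

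For the lower bound, I would fix $s\in(0,\mathrm{diam}(X))$ and choose $t=s^{d_w}$ so that the scaling $t^{-d_h/d_w}\simeq \mu(B(x,s))^{-1}$ holds. Whenever $d(x,y)\le s$ one has $d(x,y)^{d_w}/t\le 1$, so the lower heat kernel bound gives $p_t(x,y)\ge c\,\mu(B(x,s))^{-1}$. Restricting the double integral defining $\|f\|_{p,\alpha/d_w}$ to the set $\{(x,y):d(x,y)\le s\}$ and using this pointwise bound produces
\[
t^{-p\alpha/d_w}\int_X\int_X |f(x)-f(y)|^p p_t(x,y)\,d\mu(x)d\mu(y)\ge c\,s^{-p\alpha}\int_X\fint_{B(x,s)}|f(x)-f(y)|^p d\mu(y)d\mu(x),
\]
i.e.\ $\ge c\,E_{p,\alpha}(f,s)$. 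Taking the supremum over $s$ yields the left inequality.

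For the upper bound I would fix $t>0$, set $s=t^{1/d_w}$, and decompose the inner integral along dyadic annuli $A_0(x)=B(x,s)$ and $A_k(x)=B(x,2^k s)\setminus B(x,2^{k-1} s)$ for $k\ge 1$. The upper heat kernel estimate in~\eqref{eq:subGauss-upper} gives, on $A_k(x)$,
\[
p_t(x,y)\le C\,s^{-d_h}\exp\bigl(-c'\,2^{kd_w/(d_w-1)}\bigr),
\]
and combining with Ahlfors regularity of $\mu(B(x,2^k s))$ I obtain
\[
t^{-p\alpha/d_w}\int_X\int_{A_k(x)}|f(x)-f(y)|^p p_t(x,y)\,d\mu(y)d\mu(x)\le C\,a_k\,E_{p,\alpha}(f,2^k s),
\]
with $a_k=2^{k(d_h+p\alpha)}\exp(-c'\,2^{kd_w/(d_w-1)})$. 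The double-exponential decay of $a_k$ makes $\sum_k a_k$ finite, which is the crucial analytic point that absorbs the geometric growth of the annuli.

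It then remains to summon $r$. For indices $k$ with $2^k s\le r$ I bound $E_{p,\alpha}(f,2^k s)\le \sup_{s'\in(0,r]}E_{p,\alpha}(f,s')$; for $k$ with $2^k s>r$ I invoke Lemma~\ref{Lp comparison} to get $E_{p,\alpha}(f,2^k s)\le C(2^k s)^{-p\alpha}\|f\|_{L^p}^p\le C r^{-p\alpha}\|f\|_{L^p}^p$. Summing $\sum_k a_k$ on each piece and taking $p$-th roots produces
\[
\|f\|_{p,\alpha/d_w}\le C\Bigl(\sup_{s'\in(0,r]}E_{p,\alpha}(f,s')^{1/p}+r^{-\alpha}\|f\|_{L^p(X,\mu)}\Bigr).
\]
The main obstacle is the careful bookkeeping of constants when splitting the dyadic sum at the threshold $2^k s\sim r$, and making sure the super-exponential decay of $a_k$ is used robustly enough to absorb both the volume factor $2^{k d_h}$ and the scaling factor $2^{kp\alpha}$ hidden in $E_{p,\alpha}(f,2^k s)$; everything else reduces to the Ahlfors volume comparison and elementary manipulations. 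The final identification $\mathcal{B}^{\alpha,p}(X)=\mathbf{B}^{p,\alpha/d_w}(X)$ is then immediate by letting $r\to\mathrm{diam}(X)$ in the two-sided inequality.
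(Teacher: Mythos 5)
Your argument is correct and is essentially the standard proof of this equivalence: the paper itself gives no proof here, deferring to \cite{MR2574998} and \cite[Theorem 2.4]{BV3}, and those references proceed exactly as you do --- the lower bound by restricting the heat-kernel integral to $\{d(x,y)\le s\}$ with $t=s^{d_w}$, and the upper bound by a dyadic annular decomposition in which the stretched-exponential Gaussian factor absorbs the polynomial growth $2^{k(d_h+p\alpha)}$, with the tail annuli controlled via Lemma \ref{Lp comparison}. The only point to watch is the time range: with the natural space-time scaling $t\simeq s^{d_w}$ the sub-Gaussian estimate and the supremum in \eqref{eq:defnofheatbesovnorm} should be read over $t\in(0,\mathrm{diam}(X)^{d_w})$, which is what your choice of $t$ implicitly uses.
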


\subsection{$L^p$ critical Besov exponents in strongly recurrent metric measure Dirichlet spaces}

\begin{defn}
The metric measure Dirichlet space $(X,d,\mu,\mathcal E, \mathcal F)$ with  heat kernel estimates \eqref{eq:subGauss-upper} is called strongly recurrent if  $(X,d)$ is complete, geodesic and $d_h<d_w$.
\end{defn}

Strongly recurrent metric measure Dirichlet spaces and their potential theory were extensively studied by Barlow in \cite{MR1668115}. The terminology comes from the fact that the  Hunt process associated to the heat kernel is strongly recurrent, i.e. visits a given point with probability one.  Nested fractals like the  Vicsek set or the Sierpi\'nski  gasket  are examples of strongly recurrent metric measure Dirichlet spaces. The Sierpi\'nski  carpet is also a strongly recurrent metric measure Dirichlet space.  A key property of the heat semigroup on strongly recurrent metric measure Dirichlet spaces is the following Borel to H\"older uniform regularization property for the heat semigroup that was proved in \cite{BV3}: For every $g \in L^\infty(X,\mu)$ a continuous version of $P_tg$ exists and there exists a constant $C>0$ (independent from $g$) such that for every $x,y \in X$ and $t>0$, 
\begin{align}\label{wBE}
|P_t g (x)-P_t g(y)| \le C \left( \frac{d(x,y)}{t^{1/d_w}}\right)^{d_w-d_h} \| g \|_{L^\infty(X,\mu)}.
\end{align}
Such an estimate is called the weak Bakry-\'Emery estimate in \cite{BV3}. The functional inequality \eqref{wBE} plays the same role in the fractional space setting as the estimate
\begin{align}\label{wBE2}
\| \nabla P_t g \|_{\infty} \le \frac{C}{\sqrt{t}} \| g \|_{\infty}
\end{align}
does in the Riemannian setting. The importance of \eqref{wBE2} in the study of BV functions and isoperimetric estimates has been recognized in several works of Ledoux \cite{MR1186991}, \cite{MR1600888}. Note that for Riemannian manifolds with non-negative Ricci curvature the inequality \eqref{wBE2} is a simple byproduct of the Bakry-\'Emery machinery. For fractional spaces the proof of \eqref{wBE} we gave in \cite{BV3} relies on potential theoretical results proved in \cite{MR1668115} by using probabilistic methods. It would be very interesting to have a direct analytic proof of \eqref{wBE}  which does not rely on probabilistic methods.  Using \eqref{wBE} we can prove several results concerning the Korevaar-Schoen-Sobolev spaces and we mention a few below.  First, let us introduce some terminology. Let $E\subset X$ be a Borel set. We say $x$ is a Lebesgue density point of $E$ and write $x\in E^*$ if
\begin{equation*}
	\limsup_{r\to 0^+}\frac{\mu(B(x,r)\cap E)}{\mu(B(x,r))}>0.
	\end{equation*}
The measure-theoretic boundary is $\partial^*E=E^*\cap(E^c)^*$.  Now for $r>0$  define the measure-theoretic $r$-neighborhood $\partial_r^*E$ by  
\begin{equation}\label{eq:measure r neighborhood}
\partial^*_rE:=\big(E^*\cap(E^c)_r\big)\cup \big((E^c)^*\cap E_r\big),
\end{equation}
where $E_r=\{x\in X:\,\mu(B(x,r)\cap E)>0\}$ and similarly for $(E^c)_r$. Notice that $\partial^*E\subset\cap_{r>0}\partial^*_r E\subset\partial E$, where this last is the topological boundary.  

\begin{theorem}\cite[Theorem 5.1]{BV3}
Assume that $(X,d,\mu,\mathcal E,\mathcal F)$ is a strongly recurrent metric measure Dirichlet space and that there exists a non-empty open set $E \subset X$ such that $\mu(E) <+\infty$, $\mu(E^c)>0$, and 
\[
\limsup_{r \to 0} \frac{1}{r^{d_h}} \mu \left( \partial^*_r E\right) <+\infty
\]
where $\partial^*_r E$ is the  measure-theoretic $r$-neighborhood of $E$. Then $1_E \in \mathcal{B}^{d_h,1}(X)$ and the property $\mathcal{P}(1,d_h)$ holds.
\end{theorem}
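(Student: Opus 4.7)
The membership $1_E \in \mathcal{B}^{d_h,1}(X)$ is a direct computation. When $x \in E$ the inner integral $\int_{B(x,r)}|1_E(y)-1_E(x)|\,d\mu(y)$ equals $\mu(B(x,r)\cap E^c)$, and symmetrically it equals $\mu(B(x,r)\cap E)$ when $x \in E^c$. The integrand vanishes unless $x$ lies in $E \cap (E^c)_r$ or $E^c \cap E_r$; since $E$ is open we have $E \subseteq E^*$, and Lebesgue differentiation (valid under volume doubling) puts $\mu$-a.e. point of $E^c$ in $(E^c)^*$, so up to a null set its support is contained in $\partial^*_r E$. Using $\mu(B(x,r)) \ge c\,r^{d_h}$ from Ahlfors regularity and bounding the integrand by $1$ gives $E_{1,d_h}(1_E,r) \le C\,\mu(\partial^*_r E)/r^{d_h}$. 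The hypothesis controls this for small $r$, and Lemmas~\ref{Lp comparison}--\ref{local-global} extend the bound to all $r>0$. Since $\mu(E),\mu(E^c)>0$, $1_E$ is non-constant, so $KS^{d_h,1}(X)$ contains non-constant functions, verifying the first requirement of Definition~\ref{property Pp}.

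For $\mathcal{P}(1,d_h)$, the plan is to pass to the heat-kernel picture via Theorem~\ref{Besov characterization}, which identifies
\[
\sup_{r>0} E_{1,d_h}(f,r) \;\simeq\; \sup_{t\in(0,T)} t^{-d_h/d_w}\!\!\int_X\!\!\int_X |f(x)-f(y)|\,p_t(x,y)\,d\mu(x)\,d\mu(y),
\]
and, through the scaling $t = r^{d_w}$ encoded in the sub-Gaussian bounds, matches $\liminf_{r\to 0}$ with $\liminf_{t\to 0}$ of the same quantity. It therefore suffices to establish the heat-kernel version of $\mathcal{P}(1,d_h)$. The central tool is the weak Bakry--\'Emery estimate~\eqref{wBE}: for any bounded $g$ the function $P_s g$ is H\"older continuous of exponent $d_w-d_h$ with modulus $C(d(x,y)/s^{1/d_w})^{d_w-d_h}\|g\|_{L^\infty(X,\mu)}$.

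I would combine~\eqref{wBE} with the three-term decomposition, valid for $0<s<t$,
\[
|f(x)-f(y)| \le |f(x)-P_s f(x)| + |P_s f(x)-P_s f(y)| + |P_s f(y)-f(y)|,
\]
integrate against $p_t(x,y)$, and invoke stochastic completeness ($\int p_t(x,y)\,d\mu(y)=1$) to collapse the outer terms to $2\|f-P_s f\|_{L^1(X,\mu)}$, which a standard heat-kernel manipulation bounds by $C\,s^{d_h/d_w}$ times the scale-$s$ heat-Besov quantity. For the middle term I would truncate $f_n = (-n)\vee f \wedge n$ to enter the $L^\infty$ regime where~\eqref{wBE} applies, producing a $(t/s)^{(d_w-d_h)/d_w}$ factor after absorbing the sub-Gaussian moment estimate $\int d(x,y)^{d_w-d_h} p_t(x,y)\,d\mu \le C\,t^{(d_w-d_h)/d_w}$. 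Choosing $s$ along a sequence realising $\liminf_{s\to 0}$ and then letting $n\to\infty$ using super-additivity of $\liminf$ (as in Theorem~\ref{L:local_norm_approx:a}) yields the required uniform bound in $t$. The main obstacle is exactly this last coupling: transferring an $L^\infty$-based regularity bound into an $L^1$-based Besov comparison without losing the critical scaling $d_h/d_w$. It is precisely here that the strongly recurrent condition $d_h<d_w$, which makes the H\"older exponent $d_w-d_h$ positive, is essential.
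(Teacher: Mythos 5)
A preliminary remark: the paper does not prove this theorem, it quotes it from \cite{BV3}, so your sketch is being measured against the argument there. Your first half is essentially right and matches what one does: the inner average of $|1_E(y)-1_E(x)|$ over $B(x,r)$ equals $\mu(B(x,r)\cap E^c)/\mu(B(x,r))$ or $\mu(B(x,r)\cap E)/\mu(B(x,r))$, it is at most $1$, and it vanishes off $\partial^*_rE$ up to a null set (openness of $E$ for the inclusion $E\subset E^*$, Lebesgue differentiation for $(E^c)^*$), so $E_{1,d_h}(1_E,r)\le\mu(\partial^*_rE)/r^{d_h}$ (you do not even need Ahlfors regularity for this step), and Lemma \ref{local-global} extends the bound from small $r$ to all $r$.

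The second half has a genuine gap at precisely the point you flag as ``the main obstacle''. Write $\Psi(t)=\int_X\int_X|f(x)-f(y)|p_t(x,y)\,d\mu(x)\,d\mu(y)$. Your middle term, after applying \eqref{wBE} to the truncation $f_n$ and the moment bound, contributes to $t^{-d_h/d_w}\Psi(t)$ a quantity of order $n\,t^{-d_h/d_w}(t/s)^{(d_w-d_h)/d_w}$. This cannot be made uniform in $t$: if $s\ll t$ it blows up as $s\to0$, and if $s\simeq t$ it still behaves like $n\,t^{-d_h/d_w}\to\infty$, whereas the outer terms $2t^{-d_h/d_w}\|f-P_sf\|_{L^1(X,\mu)}\le 2(s/t)^{d_h/d_w}\,s^{-d_h/d_w}\Psi(s)$ are only helpful when $s\ll t$; moreover no limit in $n$ recovers the Besov energy of $f$ from the factor $n$. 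The proof in \cite{BV2,BV3} does not apply \eqref{wBE} to (a truncation of) $f$ itself: it dualizes, applying \eqref{wBE} to bounded test functions with $\|\varphi\|_{L^\infty(X,\mu)}\le1$ after moving the semigroup across the pairing by self-adjointness, which is what converts the $L^\infty$-to-H\"older statement \eqref{wBE} into an $L^1$ comparison of $\Psi(t)$ with $\Psi(s)$ for $s\le t$. A second, smaller gap: Theorem \ref{Besov characterization} compares suprema only, so the inequality $\liminf_{t\to0}t^{-d_h/d_w}\Psi(t)\le C\liminf_{r\to0}E_{1,d_h}(f,r)$, which you need in order to land on the metric form of $\mathcal{P}(1,d_h)$, does not follow from it; the annulus decomposition of $p_t$ produces $E_{1,d_h}(f,\cdot)$ at all scales $2^kt^{1/d_w}$ simultaneously, and along a minimizing sequence of scales only the $k=0$ term is controlled by the metric $\liminf$. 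Both points are repaired in the cited references, but as written the sketch does not close.
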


In the setting of the previous theorem the Korevaar-Schoen-Sobolev space $KS^{d_h,1}(X)$ was interpreted in \cite{BV3} as a space of bounded variation (BV) functions and the property  $\mathcal{P}(1,d_h)$ allowed to develop a rich theory which for instance applies to any nested fractal. On the other hand, this result does not apply to infinitely ramified fractals like the Sierpi\'nski  carpet, see Figure \ref{fig-SC}.
\begin{figure}[htb]\centering
	\includegraphics[trim={60 10 180 60},height=0.25\textwidth]{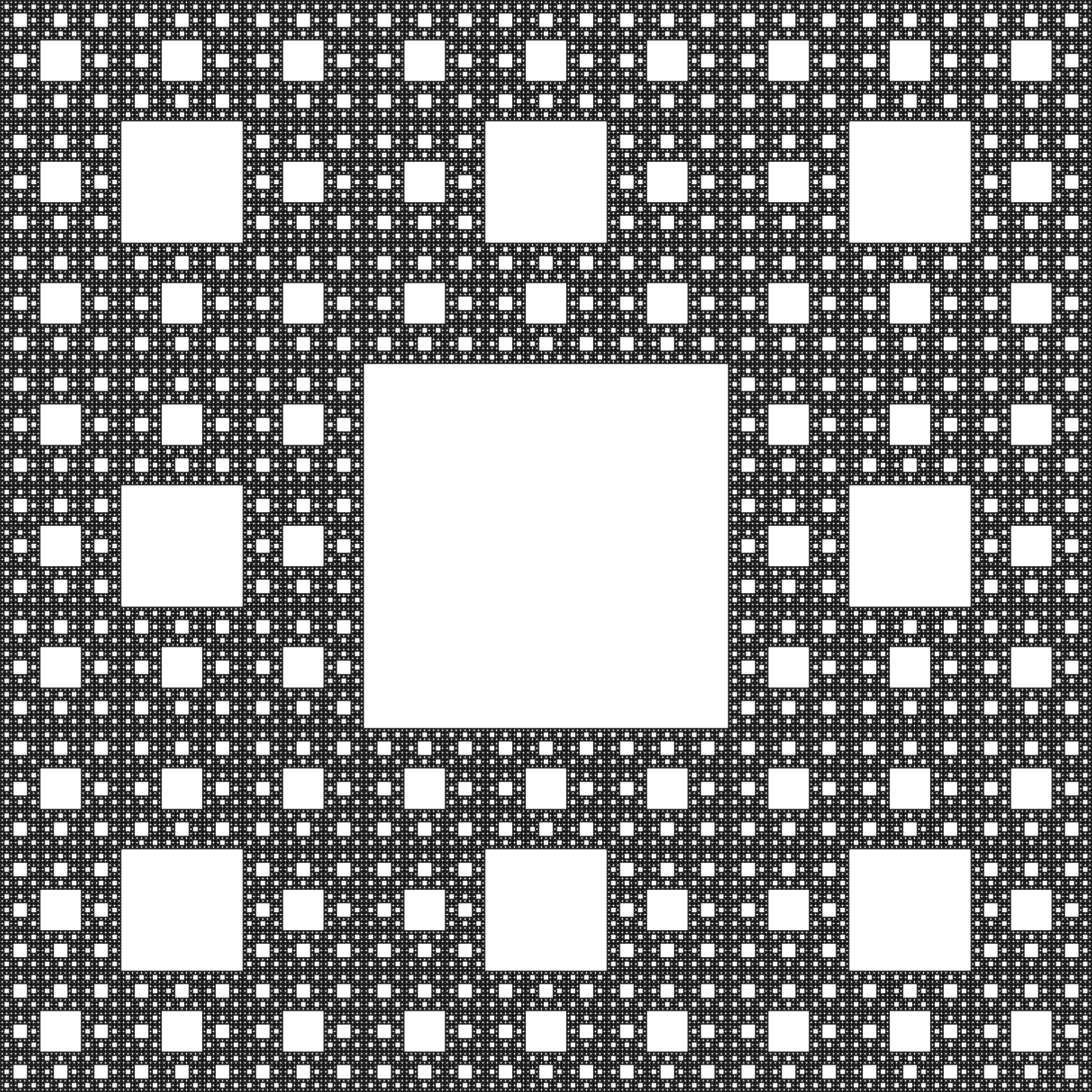}
	\caption{Sierpi\'nski carpet.} \label{fig-SC}
\end{figure}
It was conjectured in \cite{BV3} that for the Sierpi\'nski  carpet one has instead
\[
\alpha_1=2 \frac{\log 2}{\log 3}=d_h-d_{th}+1
\]
where $d_{th}$ is the topological Hausdorff dimension defined in \cite{BALKA2015881}. This conjecture is still open today as far as we know.  The next result provides some estimates on the $L^p$  critical Besov exponents of strongly recurrent metric measure Dirichlet spaces.

\begin{theorem}\cite[Theorem 3.11]{BV3}\label{estimate kappa alpha}
Assume that $(X,d,\mu,\mathcal E,\mathcal F)$ is a strongly recurrent metric measure Dirichlet space, then the $L^p$ critical Besov  exponents of $(X,d,\mu)$ satisfy:
\begin{itemize}
\item For $1 \le p \le 2$ we have $\frac{d_w}{2} \le \alpha_p \leq \Bigl(1-\frac{2}{p}\Bigr)(d_w-d_h)+\frac{d_w}{p}$. 
\item For $ p \ge 2$ we have $\Bigl(1-\frac{2}{p}\Bigr)(d_w-d_h)+\frac{d_w}{p} \le \alpha_p \le  \frac{d_w}{2}$.
\end{itemize}
\end{theorem}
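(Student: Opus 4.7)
The starting point is the identification $\mathcal{F} = KS^{d_w/2,2}(X)$ just recalled, which in particular gives $\alpha_2=d_w/2$. Together with the fact that $p\mapsto\alpha_p$ is non-increasing (Theorem \ref{constr}(3)), this yields the two ``inner'' bounds of the statement at no cost: $\alpha_p\ge d_w/2$ for $1 \le p\le 2$ and $\alpha_p\le d_w/2$ for $p\ge 2$.

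The remaining ``outer'' bounds I would prove by interpolating the $L^2$ information coming from the Dirichlet form against the $L^\infty$ regularization coming from the weak Bakry-\'Emery estimate \eqref{wBE}, working throughout on the heat semigroup side via Theorem \ref{Besov characterization}. For the lower bound when $p\ge 2$, fix any non-constant $f\in \mathcal{F}\cap L^\infty(X,\mu)$ (such $f$ exist by density of $\mathcal{F}$ in $L^2$ and truncation stability) and set $g=P_t f$ for some fixed $t>0$. Then $g\in \mathcal{F}\cap L^\infty$ and, by \eqref{wBE}, $|g(x)-g(y)|\le C(d(x,y)/t^{1/d_w})^{d_w-d_h}\|f\|_\infty$. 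For $0<s\le t$, I would split $|g(x)-g(y)|^p = |g(x)-g(y)|^{p-2}\cdot|g(x)-g(y)|^2$, bound the first factor pointwise via \eqref{wBE}, and absorb the resulting power $d(x,y)^{(p-2)(d_w-d_h)}$ into the heat kernel through the moment bound $\int d(x,y)^q p_s(x,y)\,d\mu(y)\le C s^{q/d_w}$, which follows routinely from \eqref{eq:subGauss-upper} and Ahlfors regularity. Closing with the standard $L^2$ Dirichlet estimate $\int\int|g(x)-g(y)|^2 p_s\,d\mu\,d\mu\le Cs\,\mathcal{E}(g,g)$ then produces
\[
\int_X\int_X|g(x)-g(y)|^p\, p_s(x,y)\,d\mu(x)\,d\mu(y)\;\le\; C\,s\,(s/t)^{(p-2)(d_w-d_h)/d_w}\qquad (0<s\le t),
\]
which, read through Theorem \ref{Besov characterization}, shows that the non-constant function $g$ belongs to $\mathcal{B}^{\alpha,p}(X)$ with $\alpha = (1-2/p)(d_w-d_h)+d_w/p$.

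The dual ``outer'' upper bound for $1\le p\le 2$ is the hard part. Given a non-constant $f\in L^\infty\cap\mathcal{B}^{\alpha,p}(X)$, the plan is to control $N_2(t,f):=\int\int|f(x)-f(y)|^2 p_t\,d\mu\,d\mu$ from above using again \eqref{wBE}. Writing $|f(x)-f(y)|^2 = |f(x)-f(y)|^{2-p}\,|f(x)-f(y)|^p$, decomposing $f=P_sf+(f-P_sf)$ for a parameter $s$ to be chosen, and applying the subadditivity $(a+b+c)^{2-p}\le a^{2-p}+b^{2-p}+c^{2-p}$ (valid since $2-p\in[0,1]$), the ``smooth'' piece $|P_s f(x)-P_s f(y)|^{2-p}$ is controlled by \eqref{wBE} and the heat-kernel moment estimate as in the previous paragraph, while the ``defect'' factors $|f-P_s f|^{2-p}$ are paired against $|f(x)-f(y)|^p p_t(x,y)$ by H\"older's inequality together with the pointwise Jensen bound
\[
\Big(\int_X|f(x)-f(y)|^p\, p_t(x,y)\,d\mu(y)\Big)^{2/p}\;\le\; \int_X|f(x)-f(y)|^2\, p_t(x,y)\,d\mu(y).
\]
Balancing $s$ as a suitable power of $t$ and invoking the lower bound $N_2(t,f)\ge ct$ valid for non-constant elements of $\mathrm{dom}(\mathcal{E})$ then forces $\alpha\le (1-2/p)(d_w-d_h)+d_w/p$. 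The main obstacle is precisely this optimization step: a naive $L^\infty$ truncation of the defect only yields the weaker $\alpha\le d_w/p$, and recovering the sharp exponent requires the Jensen pairing above, which turns the weak Bakry-\'Emery estimate into a self-improving quadratic inequality in $N_2(t,f)^{1/2}$ whose optimal choice of $s$ in terms of $t$ reproduces exactly the exponent $(1-2/p)(d_w-d_h)+d_w/p$.
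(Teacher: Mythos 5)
The paper does not actually prove this theorem; it is quoted from \cite{BV3}, so I can only assess your argument on its own terms. Three of the four bounds in your proposal are sound. The inner bounds do follow from $\alpha_2=d_w/2$ (via $\mathcal P(2,d_w/2)$ and the lemma that $\mathcal P(p,\alpha)$ forces $\alpha=\alpha_p$) together with the monotonicity of $p\mapsto\alpha_p$. Your lower bound for $p\ge 2$ also works: for $g=P_\tau f$ with $f\in\mathcal F\cap L^\infty$ non-constant, the splitting $|g(x)-g(y)|^p=|g(x)-g(y)|^{p-2}|g(x)-g(y)|^2$, the estimate \eqref{wBE}, the absorption $d(x,y)^q p_s(x,y)\le Cs^{q/d_w}p_{\sigma s}(x,y)$ (note this needs the \emph{two-sided} bounds \eqref{eq:subGauss-upper}, not merely a moment estimate), and $\int_X\int_X|g(x)-g(y)|^2p_{\sigma s}\,d\mu\,d\mu=2\langle(I-P_{\sigma s})g,g\rangle\le 2\sigma s\,\mathcal E(g,g)$ combine to give a non-constant element of $\mathcal B^{\alpha,p}(X)$ with $\alpha=(1-2/p)(d_w-d_h)+d_w/p$, read off through Theorem \ref{Besov characterization}.

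The gap is the upper bound for $1\le p<2$, and it is not a matter of unfinished details: the scheme you outline cannot produce the exponent. Set $\phi(t)=\int_X\int_X|f(x)-f(y)|^2p_t\,d\mu\,d\mu$, $\gamma=(2-p)(d_w-d_h)/d_w$ and $\lambda=\alpha/d_w$. Carrying out your decomposition exactly as described (subadditivity of $u\mapsto u^{2-p}$ applied to $f=P_sf+(f-P_sf)$, \eqref{wBE} on the smooth piece, H\"older plus the Jensen pairing on the defect) yields
\[
\phi(t)\;\le\;C\,s^{-\gamma}t^{\gamma}\,t^{p\lambda}\;+\;C\,\|f-P_sf\|_{L^2(X,\mu)}^{2-p}\,\phi(t)^{p/2},
\qquad \|f-P_sf\|_{L^2(X,\mu)}^2\le\tfrac12\,\phi(s).
\]
If $s\le t$ the first term is at least $Ct^{p\lambda}$, so the weak Bakry-\'Emery factor gains nothing; if $s\ge t$ then $\phi(s)\ge\phi(t)$ (by spectral monotonicity of $t\mapsto\langle(I-P_t)f,f\rangle$) and the second term is at least $\phi(t)$ itself, with a constant that is not below one, so it cannot be absorbed. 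If one takes $s=\eps t$ and iterates, the self-improvement has fixed point $\theta=p\lambda$ under the ansatz $\phi(u)\simeq u^{\theta}$ (a short computation shows the balance condition forces $\theta=p\lambda$ for every starting $\theta$). Every variant I can extract from your sketch therefore ends at $\phi(t)\lesssim t^{p\lambda}$, i.e.\ at $\alpha_p\le d_w/p$ --- precisely the ``naive'' bound you say you want to beat. The structural obstruction is that \eqref{wBE} regularizes $P_sf$, not $f$, and the missing factor $t^{\gamma}$ cannot be recovered by treating $f-P_sf$ perturbatively. For comparison, in \cite{BV3} the endpoint $p=1$ (that is, $\alpha_1\le d_h$) is obtained through the theory of sets of finite perimeter --- lower bounds on $\int_E\int_{E^c}p_t\,d\mu\,d\mu$ from the heat-kernel lower bound and Ahlfors regularity, a Lebesgue-density and connectivity argument, and a coarea decomposition (cf.\ Section 7.3 of this paper) --- which is a genuinely different mechanism; the range $1<p<2$ does not follow formally from your two-term splitting, and this part of your proof needs a new idea.
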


We note that for the Vicsek set one has for every $p \ge 1$, $\alpha_p = \Bigl(1-\frac{2}{p}\Bigr)(d_w-d_h)+\frac{d_w}{p}$ because in that case $d_w-d_h=1$. On the other hand for the real line one has for every $p \ge 1$, $\alpha_p=\frac{d_w}{2}=1$. Therefore, in a sense, the above estimates are optimal over the range of all possible strongly recurrent metric measure Dirichlet spaces. However, for spaces satisfying $d_w-d_h<1$, like the Sierpi\'nski  gasket, the lower bound $\alpha_p \ge \Bigl(1-\frac{2}{p}\Bigr)(d_w-d_h)+\frac{d_w}{p}$, $p \ge 2$, is  interesting only when $2 \le p \le \frac{2d_h-d_w}{1-d_w+d_h}$ because we know that we always have $\alpha_p \ge 1$. When $p \ge 2$, the upper bound $\alpha_p \le \frac{d_w}{2}$ is not very good for large values of  $p$ because we know from Theorem \ref{estimate sup} that $\alpha_p\le 1+\frac{d_h}{p}$.  In view of this discussion and of the known results in the Vicsek set and the Sierpi\'nski  gasket, we can ask the following question: Is it true that for a nested fractal we have for every $p \ge 1$, $\alpha_p \le 1+\frac{d_h-1}{p}$ ?

The quantity
\[
\delta=\inf  \{ p \ge 1: p \alpha_p >d_h\}
\]
seems to be of significance. In the theory of Korevaar-Schoen-Sobolev spaces, it is the infimum of the exponent $p$ for which the space $KS^{\alpha_p ,p}(X)$ can be embedded into the space of continuous functions by using Theorem \ref{T:Morrey_local}; For instance, for the Vicsek set or the Sierpi\'nski gasket, the previous results show that $\delta=1$. Actually, from Lemma 4.10 in \cite{gao2023heat}, for every nested fractals  one has $p \alpha_p >d_h$ for every $p>1$. Therefore one  also has $\delta=1$ for every nested fractals. A similar exponent appears in Kigami's work \cite{Kigami} where it is conjectured to be equal to the Ahlfors regular conformal dimension of the space. Kigami's conjecture was recently proved in \cite{CCK}. It is natural to make the same conjecture in our setting.

%
%

\noindent
Fabrice Baudoin: \url{fbaudoin@math.au.dk}\\
Department of Mathematics,
Aarhus University

\end{document}